\theoremstyle{plain}
\newtheorem{theorem}{Theorem}[section]
\newtheorem*{theorem*}{Theorem}
\newtheorem{proposition}[theorem]{Proposition}
\newtheorem{lemma}[theorem]{Lemma}
\newtheorem{corollary}[theorem]{Corollary}
\newtheorem{fact}[theorem]{Fact}
\theoremstyle{definition}
\newtheorem{definition}[theorem]{Definition}
\newtheorem{remark}[theorem]{Remark}
\numberwithin{equation}{subsection}
\newcommand{\define}[1]{\textit{#1}}
\newcommand{\set}[1]{\left\{#1 \right\}}
\newcommand{\esssup}{\textup{ess sup}}
\newcommand{\Ker}{\mathrm{Ker}}
\newcommand{\Hom}{\mathrm{Hom}}
\DeclareMathOperator{\End}{End}
\newcommand{\BasicRing}[1]{\mathbb{#1}}
\newcommand{\ZZ}{\BasicRing{Z}}
\newcommand{\NN}{\BasicRing{N}}
\newcommand{\RR}{\BasicRing{R}}
\newcommand{\CC}{\BasicRing{C}}
\newcommand{\Ann}{\mathrm{Ann}}
\newcommand{\PIdeg}{\mathrm{PI.deg}}
\newcommand{\Len}{\mathrm{Len}}
\newcommand{\alg}[1]{\mathcal{#1}}
\DeclareMathOperator{\gr}{gr}
\newcommand{\Ad}{\mathrm{Ad}}
\newcommand{\lieC}[1]{\mathfrak{#1}}
\newcommand{\lie}[1]{\mathfrak{#1}}
\newcommand{\univ}[1]{\mathcal{U}(\lieC{#1})}
\newcommand{\univcent}[1]{\mathcal{Z}(\lieC{#1})}
\newcommand{\BGGcat}[2]{\mathcal{O}_{\lie{#2}}^{\lie{#1}}}
\newcommand{\sect}{\Gamma}
\newcommand{\zuck}[2]{\Gamma^{#1}_{#2}}
\newcommand{\Dzuck}[3]{R^{#3}\zuck{#1}{#2}}
\newcommand{\regular}{\mathcal{O}}
\newcommand{\rring}[1]{\regular(#1)}
\newcommand{\Variety}{\mathcal{V}}
\newcommand{\Sn}{\mathfrak{S}}
\DeclareMathOperator{\PI}{PI}
\DeclareMathOperator{\sgn}{sgn}
\newcommand{\Hilbert}[1]{\mathcal{#1}}
\newcommand{\toptensor}{\mathbin{\widehat{\otimes}}}
\newcommand{\Jacobson}{\mathcal{J}}
\DeclareMathOperator{\Prim}{Prim}
\begin{document}

\title{Lower semicontinuity of bounded property in the branching problem and sphericity of flag variety}

\author{Masatoshi Kitagawa}

\date{}

\maketitle

\begin{abstract}
	Vinberg--Kimel'fel'd [Funct. Anal. Appl., 1978] established that a quasi-projective normal $G$-variety $X$ is spherical if and only if $G$-modules on the spaces $\sect(X, \mathcal{L})$ of global sections of $G$-equivariant line bundles are multiplicity-free.
	This result was generalized by Kobayashi--Oshima [Adv. Math., 2013] and several researchers to (degenerate) principal series representations of reductive Lie groups.
	The purpose of this short article is to show that the boundedness of the multiplicities in the restrictions of cohomologically induced modules implies the sphericity of some partial flag variety.

	In our previous paper, we reduce the boundedness of the multiplicities to the finiteness of a ring-theoretic invariant $\PIdeg$.
	To show the main result, we discuss the lower semicontinuity of $\PIdeg$ on the space $\mathrm{Prim}(\mathcal{U}(\mathfrak{g}))$ of primitive ideals.
	We also treat the finiteness of the lengths of the restrictions of cohomologically induced modules.
\end{abstract}

\section{Introduction}

In this short article, we deal with the branching problem of reductive Lie groups with bounded multiplicities.
The main result is that the boundedness of the multiplicities in the restrictions of cohomologically induced modules implies the sphericity of some partial flag variety.
To show the result, we discuss the lower semicontinuity of a ring-theoretic invariant $\PIdeg$ on the space of primitive ideals.

Our main result is based on the following classical result.

\begin{fact}[Vinberg--Kimel'fel'd {\cite{ViKi78_spherical}}]\label{intro:fact:VinbergKimelfeld}
	Let $G$ be a connected reductive algebraic group over $\CC$ and $G'$ a connected reductive subgroup of $G$.
	For a partial flag variety $X$ of $G$, the following conditions are equivalent.
	\begin{enumerate}
		\item $X$ is a spherical $G'$-variety, i.e.\ a Borel subgroup of $G'$ has an open orbit in $X$.
		\item For any $G$-equivariant line bundle $\mathcal{L}$ on $X$, the $G'$-module on $\sect(X, \mathcal{L})$ is multiplicity-free.
	\end{enumerate}
\end{fact}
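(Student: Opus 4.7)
The plan is to prove the two implications separately, comparing the dimensions of $B'$-semi-invariant subspaces of $\sect(X, \mathcal{L})$ with the $B'$-orbit structure on $X$, where $B' \subset G'$ is a fixed Borel subgroup. Since $X$ is a (projective) partial flag variety, $\sect(X, \mathcal{L})$ is a finite-dimensional algebraic $G$-module, and the $G'$-module $\sect(X, \mathcal{L})$ is multiplicity-free if and only if, for every $B'$-character $\lambda$, the $B'$-semi-invariant weight space $\sect(X, \mathcal{L})^{B', \lambda}$ has dimension at most one.

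For $(1) \Rightarrow (2)$, fix an open $B'$-orbit $U \subset X$ and a base point $x_0 \in U$. Any section $s \in \sect(X, \mathcal{L})^{B', \lambda}$ is determined on $U$ by the single value $s(x_0) \in \mathcal{L}_{x_0}$, because $B'$ acts transitively on $U$ through the fixed character $\lambda$. Density of $U$ in $X$ then forces $s$ to be determined on all of $X$ by $s(x_0)$, so $\dim \sect(X, \mathcal{L})^{B', \lambda} \le 1$, and multiplicity-freeness follows from the decomposition into irreducible $G'$-constituents, each contributing a unique highest weight line.

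For $(2) \Rightarrow (1)$, I would argue by contrapositive. If $B'$ has no open orbit in $X$, then the field $\CC(X)^{B'}$ of $B'$-invariant rational functions has positive transcendence degree over $\CC$. Passing to the total section algebra $A(X) = \bigoplus_{\mathcal{L}} \sect(X, \mathcal{L})$, graded over the $G$-equivariant Picard group of $X$, one identifies the fraction field of its $B'$-semi-invariant subalgebra with an extension of $\CC(X)^{B'}$; a transcendence-degree count then forces some multigraded component to contain at least two linearly independent $B'$-semi-invariants of the same weight, producing a multiplicity in $\sect(X, \mathcal{L})$ for a suitable line bundle.

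The main technical obstacle is in the converse direction: explicitly converting a nonconstant rational $B'$-invariant into a coincidence of $B'$-weights inside genuine regular sections. The trick is to clear denominators along a $B'$-stable divisor by twisting with a sufficiently $G$-ample line bundle, whose existence is guaranteed by the projectivity of the partial flag variety $X$; normality of $X$ (automatic here) is what ensures that semi-invariant rational sections of such a twist extend to honest global sections on the complement of a divisor of sufficiently high order.
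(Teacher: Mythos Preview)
The paper does not prove this statement: it is recorded as a Fact with a citation to Vinberg--Kimel'fel'd and used only as background in the introduction. There is thus no proof in the paper to compare your attempt against.

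On its own merits, your sketch is essentially the classical argument and is correct in outline. The implication $(1)\Rightarrow(2)$ is clean: evaluation at a point of the open $B'$-orbit gives an injection $\sect(X,\mathcal{L})^{B',\lambda}\hookrightarrow \mathcal{L}_{x_0}\cong\CC$, and this forces each highest-weight multiplicity to be at most one.

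For $(2)\Rightarrow(1)$ your strategy (Rosenlicht plus clearing denominators) is the right one, but the write-up blurs two distinct mechanisms. The direct route is: pick a nonconstant $f\in\CC(X)^{B'}$; choose a very ample $G$-equivariant line bundle $M$ and $n\gg 0$ so that $f=s/t$ with $s,t\in\sect(X,M^{\otimes n})$; the subspace $\{t: ft\in\sect(X,M^{\otimes n})\}$ is then nonzero and $B'$-stable, so by Lie--Kolchin it contains a $B'$-eigenvector $t_0$ of some weight $\lambda$; then $t_0$ and $ft_0$ are two independent elements of $\sect(X,M^{\otimes n})^{B',\lambda}$. Your invocation of normality is a slight misattribution: normality governs extension across codimension $\geq 2$, whereas the issue here is purely divisorial---ampleness is what supplies enough sections to absorb the pole divisor of $f$, and on a flag variety every line bundle is already $G$-linearizable. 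The ``transcendence-degree count'' via the Cox ring in your middle paragraph is more roundabout than necessary; what you describe in your final paragraph, once made precise as above, is already the whole argument.
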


Let $G_\RR$ be a real form of $G$ such that $G'_\RR\coloneq G_\RR \cap G'$ is a real form of $G'$.
Kobayashi--Oshima \cite{KoOs13} established a generalization of Fact \ref{intro:fact:VinbergKimelfeld} for a full flag variety $X$ to infinite-dimensional representations of reductive Lie groups.
They showed that $\dim \Hom_{G'_\RR}(V, V')$ is bounded for smooth admissible representations $V$ and $V'$ of $G_\RR$ and $G'$, respectively, if and only if $X$ is $G'$-spherical.
Several variations of the Kobayashi--Oshima theorem were given in \cite{AiGoMi16,Ta21,Ko22}, replacing $X$ with a partial flag variety and $V$ with degenerate principal series representations realized on a real form of $X$.
In \cite{AiGo21}, similar criteria for the boundedness of multiplicities were given by some condition called $\mathcal{O}$-spherical on nilpotent coadjoint orbits in $\lie{g}^*$.

Spherical actions are strongly related to visible actions on complex manifolds.
The notion of visible actions was introduced by T.\ Kobayashi in \cite{Ko97_multiplicity_free,Ko05}.
Many multiplicity-free representations are obtained from visible actions \cite{Ko13}.
See \cite{Ta22,Ta25} for the relation between visible actions and spherical actions, and references therein for recent studies.

In \cite{KoOs13,Ko22}, the implication 2 $\Rightarrow$ 1 of Fact \ref{intro:fact:VinbergKimelfeld} was also shown.
This is because $V'$ includes finite-dimensional representations when consider the multiplicity $\dim \Hom_{G'_\RR}(V, V')$, and the implication is reduced to the algebraic group case.
In this article, we discuss an analogue of 2 $\Rightarrow$ 1 for cohomologically parabolically induced modules.

The boundedness results in the above are stated for any closed subgroup $H$ of $G$ or $G_\RR$,
and the results for infinite-dimensional representations of reductive Lie groups are obtained from the case of Borel subgroups $H\subset G'$.
Restricting the problem only to reductive Lie groups, we have different types of characterizations \cite{Ki20} of restrictions with bounded multiplicities.
We shall state one of them.

In this article, we deal with $(\lie{g}, K)$-modules.
Although our results hold for unitary representations, smooth admissible representations and analytic representations, we will not state them explicitly.
See Theorem \ref{thm:UniformlyBoundedPIdegInequality} and Remark \ref{rmk:UnitarySmooth}.
For simplicity, suppose that $G_\RR$ and $G'_\RR$ are connected.
Let $\theta$ be a Cartan involution stabilizing $G'_\RR$.
Set $K_\RR \coloneq G_\RR^\theta$ and $K'_\RR \coloneq G'_\RR\cap K_\RR$ and write $K$ (resp.\ $K'$) for the complexification of $K_\RR$ (resp.\ $K'_\RR$).
In \cite{Ki20}, we have shown the following result (see Theorem \ref{thm:UniformlyBoundedPIdegInequality}).

\begin{fact}\label{intro:fact:UniformlyBoundedPIdegInequality}
	Let $V$ be an irreducible $(\lie{g}, K)$-module.
	Then there exists a constant $C>0$ independent of $V$ such that
	\begin{align*}
		\PIdeg((\univ{g}/\Ann_{\univ{g}}(V))^{G'}) &\leq \sup_{W} \dim(\Hom_{\lie{g'}, K'}(V|_{\lie{g'}, K'}, W)) \\
		&\leq C\cdot \PIdeg((\univ{g}/\Ann_{\univ{g}}(V))^{G'}),
	\end{align*}
	where $W$ runs over all irreducible $(\lie{g'}, K')$-modules.
\end{fact}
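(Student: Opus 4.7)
The plan is to exploit the natural right action of $A \coloneq (\univ{g}/\Ann_{\univ{g}}(V))^{G'}$ on the multiplicity spaces and let PI theory for $A$ do the work. First I would record the bimodule structure: for $u \in \univ{g}^{G'}$ the endomorphism induced by $u$ on $V$ commutes with the $(\lie{g'}, K')$-action, so post-composition makes $M_W \coloneq \Hom_{\lie{g'}, K'}(V|_{\lie{g'}, K'}, W)$ a right $A$-module for every irreducible $(\lie{g'}, K')$-module $W$. I would also check that $A$ is prime Noetherian: primeness comes from the primitivity of $\Ann_{\univ{g}}(V)$ together with the fact that invariants of a prime algebra under a connected reductive group remain prime; Noetherianity follows because $A$ is finitely generated over its center, which contains $\univcent{g}^{G'}$. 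If $\PIdeg(A) = \infty$ there is nothing to prove, so I may assume $A$ is a prime PI algebra of finite PI-degree $n$.

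For the upper bound I would invoke the Kaplansky--Posner theorem: every simple module over a prime PI algebra of PI-degree $n$ has $\CC$-dimension at most $n$. It then suffices to bound the $A$-length of $M_W$ by a constant $C$ depending only on the pair $(\lie{g}, \lie{g'})$. Composition factors of $M_W$ are controlled by the primitive ideals of $A$ containing $\Ann_A(M_W)$, which correspond to $G'$-orbits on $\Prim(\univ{g})$ lying over $\Ann_{\univ{g}}(V)$; using Duflo-type finiteness of primitive ideals with a fixed infinitesimal character together with the $\univcent{g}^{G'}$-action, the number of such factors is bounded by a constant depending only on the ranks of $\lie{g}$ and $\lie{g'}$.

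For the lower bound I would work at a generic Azumaya point. Choose a maximal ideal $\mathfrak{m} \subset Z(A)$ inside the Azumaya locus, so $A/\mathfrak{m}A \cong M_n(\CC)$. Any $(\lie{g'}, K') \otimes A/\mathfrak{m}A$-subquotient of $V$ decomposes by Morita theory as $E \otimes \CC^n$, and picking an irreducible $(\lie{g'}, K')$-quotient $W$ of $E$ yields $\dim M_W \geq n = \PIdeg(A)$. The main obstacle is the uniform $A$-length bound in the upper inequality: a priori the length of $M_W$ depends on both $V$ and $W$, so extracting a constant independent of $V$ requires geometric input — for example a comparison of associated varieties or Gelfand--Kirillov dimensions under the projection $\lie{g}^* \to (\lie{g'})^*$ — to bound uniformly the number of primitive ideals that can support the composition factors of $M_W$.
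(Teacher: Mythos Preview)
Your upper-bound strategy has a genuine gap precisely where you flag it. You want to bound the $A$-length of $M_W$ uniformly in $V$ and $W$, but the proposed mechanism---identifying composition factors of $M_W$ with ``$G'$-orbits on $\Prim(\univ{g})$ lying over $\Ann_{\univ{g}}(V)$'' and then counting them via Duflo-type finiteness---is not a correct correspondence. Primitive ideals of $A=(\univ{g}/\Ann_{\univ{g}}(V))^{G'}$ are not indexed by $G'$-orbits of primitive ideals of $\univ{g}$ in any way that is set up here, and even granting some such description you have no control on the multiplicity with which a given simple $A$-module occurs in $M_W$. In fact nothing in your outline shows that $M_W$ has finite $A$-length at all.

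The paper (following the companion paper \cite{Ki20,Ki23}) avoids this problem by not analyzing $M_W$ directly. The key structural input is Theorem~\ref{thm:FiniteLength}: for any maximal ideal $I\subset\univcent{g'}$ and any $k$, the $(\univ{g'}\otimes\univ{g}^{G'})$-module $V/I^{k}V$ has length bounded by a constant $C$ independent of $I$ and $V$. One takes $k=|W_{G'}|$ so that $\Hom_{\lie{g'},K'}(V,W)=\Hom_{\lie{g'},K'}(V/I^{k}V,W)$ for the appropriate $I$ (Lemma~\ref{lem:GeneralizedInfinitesimalCharHighestWeightModule}); each irreducible $(\univ{g'}\otimes A)$-subquotient of $V/I^{k}V$ is of the form $L'\boxtimes L''$ with $\dim L''\leq\PIdeg(A)$ by Proposition~\ref{prop:PIdegMaximumDim}, and the bound $\dim M_W\leq C\cdot\PIdeg(A)$ follows. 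The uniform constant $C$ is thus produced by a finite-length theorem for coinvariants of $V$, not by counting primitive ideals of $A$.

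Two smaller points. Your claim that $A$ is prime because invariants of a prime algebra under a connected reductive group remain prime is not a routine fact and is not needed: Proposition~\ref{prop:PIdegMaximumDim} already bounds $\dim L''$ by $\PIdeg(A)$ without any primeness hypothesis. And in the lower-bound step, the Azumaya-locus idea requires knowing that $V$ admits a nonzero quotient with $Z(A)$-support at your chosen $\mathfrak{m}$ and that this quotient has the expected $(\lie{g'},K')$-structure; neither is automatic, and the paper's argument (in \cite{Ki20}) again proceeds differently, via embeddings of irreducible $A$-modules into multiplicity spaces.
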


$\PIdeg$ is a ring-theoretic invariant defined by non-commutative polynomial identities.
Note that for a noetherian $\CC$-algebra $\alg{A}$ of at most countable dimension, $\PIdeg(\alg{A})$ coincides with the maximal dimension of irreducible $\alg{A}$-modules.
See Subsection \ref{subsection:PIdeg}.
Fact \ref{intro:fact:UniformlyBoundedPIdegInequality} says that the boundedness of multiplicities in $V|_{\lie{g'}, K'}$ is controlled by the ring-theoretic invariant $\PIdeg((\univ{g}/\Ann_{\univ{g}}(V))^{G'})$.
The fact is an analogue of the well-known fact that, for an affine smooth $G$-variety $X$, the ring of invariant differential operators is commutative if and only if the coordinate ring $\rring{X}$ is multiplicity-free.

The main tool in this article is the lower semicontinuity of $\PIdeg((\univ{g}/\Ann_{\univ{g}}(V))^{G'})$.
We will show the following result in Theorem \ref{thm:LowerSemicontinuityPIdeg}.

\begin{theorem}\label{intro:thm:LowerSemicontinuity}
	The map $\Prim(\univ{g}) \ni I \mapsto \PIdeg((\univ{g}/I)^{G'}) \in \NN \cup \set{\infty}$
	is lower semicontinuous,
	where $\Prim(\univ{g})$ is the set of primitive ideals with the Jacobson topology.
\end{theorem}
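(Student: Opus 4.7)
The plan is to verify lower semicontinuity directly by identifying each sub-level set
\[
F_n \coloneq \{I \in \Prim(\univ{g}) : \PIdeg((\univ{g}/I)^{G'}) \leq n\}
\]
with a Jacobson-closed set of the form $V(S) = \{I \in \Prim(\univ{g}) : S \subseteq I\}$. The key ingredients are: (a) the standard characterization that $\PIdeg(B) \leq n$ if and only if $B$ satisfies every noncommutative polynomial identity of $M_n(\CC)$; and (b) the reductivity of $G'$, which forces the $G'$-invariants functor to be exact on rational $G'$-modules.

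I would begin by unwinding definitions: lower semicontinuity of an $\NN \cup \{\infty\}$-valued map amounts to the closedness of $F_n$ for every $n \in \NN$, and Jacobson-closed sets are by definition those of the form $V(S)$. So the whole theorem reduces to producing, for each $n$, a subset $S_n \subseteq \univ{g}$ with $F_n = V(S_n)$.

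Next I would translate the PI condition into an ideal-containment condition. Any primitive ideal $I \subseteq \univ{g}$ is two-sided and therefore stable under the adjoint $G'$-action; since $G'$ is reductive and $\univ{g}/I$ is a rational $G'$-module, the canonical map $\univ{g}^{G'} \to (\univ{g}/I)^{G'}$ is surjective, so every element of $(\univ{g}/I)^{G'}$ lifts to $\univ{g}^{G'}$. Consequently, an identity $p(x_1,\ldots,x_k) = 0$ of $M_n(\CC)$ holds in $(\univ{g}/I)^{G'}$ if and only if $p(b_1,\ldots,b_k) \in I$ for all $b_1,\ldots,b_k \in \univ{g}^{G'}$. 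Setting
\[
S_n \coloneq \bigcup_{p} \{p(b_1,\ldots,b_k) : b_1,\ldots,b_k \in \univ{g}^{G'}\} \subseteq \univ{g}^{G'},
\]
where $p$ ranges over all polynomial identities of $M_n(\CC)$, one obtains $F_n = V(S_n)$, which is Jacobson-closed.

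I do not anticipate a substantive obstacle: the argument is a chain of equivalences, and the only nontrivial input beyond definitions is the exactness of $G'$-invariants used to lift elements of $(\univ{g}/I)^{G'}$ into $\univ{g}^{G'}$ uniformly in $I$. Conceptually, the point is that $\PIdeg((\univ{g}/\,\cdot\,)^{G'}) \leq n$ is cut out by (many) closed conditions of the form ``contains the evaluation of a fixed polynomial identity on $G'$-invariants,'' which is exactly the recipe for lower semicontinuity.
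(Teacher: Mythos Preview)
Your reduction to showing $F_n = V(S_n)$ is the right shape, and the use of reductivity of $G'$ to lift invariants is correct. The gap is in ingredient (a): the equivalence ``$\PIdeg(B)\le n$ if and only if $B$ satisfies every identity of $M_n(\CC)$'' is false without extra hypotheses on $B$. Only one implication is automatic: if $\PI(M_n(\CC))\subset\PI(B)$ then indeed $\PIdeg(B)\le n$, which gives $V(S_n)\subset F_n$. The reverse inclusion fails already for $B=T_2(\CC)$, the upper--triangular $2\times 2$ matrices: every irreducible $T_2(\CC)$-module is one--dimensional, so $\PIdeg(T_2(\CC))=1$ by Proposition~\ref{prop:PIdegMaximumDim}, yet $T_2(\CC)$ is not commutative and hence does not satisfy the identity $[X,Y]\in\PI(M_1(\CC))$. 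The obstruction is precisely the Jacobson radical: what one gets from $\PIdeg(B)\le n$ is only that $B/\Jacobson(B)$ satisfies the identities of $M_n(\CC)$, not $B$ itself.

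For the algebras $(\univ{g}/I)^{G'}$ with $I$ primitive there is no a priori reason for $\Jacobson$ to vanish (the paper notes this holds only under extra assumptions such as complete reducibility of $L|_{\lie{g'}}$), so your $F_n=V(S_n)$ cannot be established as stated. This is exactly the ``substantive obstacle'' the paper works to overcome: Theorem~\ref{thm:UniformNilpotency} produces a single exponent $C$ with $\Jacobson((\univ{g}/I)^{G'})^C=0$ for \emph{all} primitive $I$, and Proposition~\ref{prop:CharacterizationPIdeg} then shows that $\PIdeg\le n$ is equivalent to the single identity $(s_{2n}\cdot X_{2n+1})^C$. With that uniform $C$ in hand, your scheme goes through verbatim upon replacing ``all identities of $M_n(\CC)$'' by this one polynomial. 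So the missing idea is the uniform bound on the nilpotency index of the Jacobson radical; without it the sub-level sets need not be Jacobson-closed.
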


Using Theorem \ref{intro:thm:LowerSemicontinuity}, we will show a variation of 2 $\Rightarrow$ 1 in Fact \ref{intro:fact:VinbergKimelfeld}  for cohomologically parabolically induced modules $\mathcal{R}^j(Z)$.
Let $P$ be a $\theta$-stable parabolic subgroup of $G$ and fix a Levi decomposition $P=LU_P$ such that $L$ is $\theta$-stable and $U_P$ is the unipotent radical of $P$.
Write $K_L$ for the centralizer of the center of $\lie{l}$ in $K$.
For an $(\lie{l}, K_L)$-module $Z$ and $j \in \NN$, set
\begin{align*}
	\mathcal{R}^j(Z) \coloneq \Dzuck{K}{K_L}{j}(\Hom_{\lie{q}}(\univ{g}, Z)_{K_L})
\end{align*}
by letting $\lie{u}_P$ act on $Z$ trivially.
Here $\Dzuck{K}{K_L}{j}$ is the $j$-th Zuckerman derived functor.
Then $\mathcal{R}^j(Z)$ is a $(\lie{g}, K)$-module called a cohomologically induced module.
The following theorem is the main result in this article.
See Theorem \ref{thm:AqLambda} for the details.

\begin{theorem}\label{intro:thm:AqLambda}
	Let $Z$ be an $(\lie{l}, K_L)$-module of finite length and $j \in \NN$.
	If, for sufficiently many characters $\lambda$ of $(\lie{l}, K_L)$, $\mathcal{R}^j(Z\otimes \CC_{\lambda})$ is non-zero and there exists a constant $C > 0$ independent of $\lambda$ such that all multiplicities in $\mathcal{R}^j(Z\otimes \CC_{\lambda})|_{\lie{g'}, K'}$ are bounded by $C$, then $G/P$ is $G'$-spherical.
\end{theorem}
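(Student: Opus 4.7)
The plan is to combine Fact \ref{intro:fact:UniformlyBoundedPIdegInequality} with the lower semicontinuity of Theorem \ref{intro:thm:LowerSemicontinuity} to transport the uniform multiplicity bound along the family $\mathcal{R}^j(Z \otimes \CC_\lambda)$ to a PI-degree bound on a single limit primitive ideal $I_0$ that is directly attached to $G/P$, and then to read off sphericity from a ring-theoretic characterization.

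First I would reduce to irreducible $(\lie{g}, K)$-modules. Since $Z$ has finite length, $\mathcal{R}^j(Z \otimes \CC_\lambda)$ has length bounded uniformly in $\lambda$ (via standard finiteness properties of cohomological induction applied to a fixed composition series of $Z$), and the multiplicity bound passes to each irreducible subquotient. Only finitely many ``types'' of irreducible subquotient appear rationally in $\lambda$, so one can extract a family of irreducible subquotients $V_\lambda$ that is non-zero for sufficiently many $\lambda$ and still satisfies the hypothesis with a possibly enlarged constant $C'$. Applying Fact \ref{intro:fact:UniformlyBoundedPIdegInequality} to $V_\lambda$ then gives the uniform bound
\[
    \PIdeg((\univ{g}/J_\lambda)^{G'}) \leq C', \qquad J_\lambda \coloneq \Ann_{\univ{g}}(V_\lambda) \in \Prim(\univ{g}).
\]

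Next I would pass to a limit. For generic $\lambda$, $J_\lambda$ has associated variety equal to the Richardson orbit closure attached to $P$, independent of $\lambda$, and the family $\{J_\lambda\}$ has a limit point $I_0$ in the Jacobson topology as the infinitesimal character degenerates (for instance to a trivial or singular one). I would arrange $I_0$ to be the annihilator of a concrete $(\lie{g}, K)$-module canonically attached to $G/P$, such as the annihilator of $\sect(G/P, \mathcal{L})$ for a suitable $G$-equivariant line bundle $\mathcal{L}$, or of $\rring{G/P}$ itself regarded as a $(\univ{g}, K)$-module. Since the sublevel set $\{I : \PIdeg((\univ{g}/I)^{G'}) \leq C'\}$ is closed in $\Prim(\univ{g})$ by Theorem \ref{intro:thm:LowerSemicontinuity}, the bound passes to the limit: $\PIdeg((\univ{g}/I_0)^{G'}) \leq C'$.

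The hardest step is to deduce $G'$-sphericity of $G/P$ from finiteness of $\PIdeg((\univ{g}/I_0)^{G'})$. Filtering $\univ{g}/I_0$ by the PBW filtration, the symbol map embeds $\gr((\univ{g}/I_0)^{G'})$ into $(\gr(\univ{g}/I_0))^{G'}$, a $G'$-invariant subalgebra of functions on the Richardson orbit closure whose spectrum is a $G'$-invariant-theoretic quotient identified via the moment map with an invariant quotient of $T^*(G/P)$. Finiteness of the PI-degree on the non-commutative side should force the generic $G'$-orbit on $T^*(G/P)$ to be coisotropic, which by the Knop--Vinberg theory of invariants of cotangent bundles is equivalent to $G'$-sphericity of $G/P$. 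The main obstacle lies precisely here: the filtered-to-graded passage must preserve enough non-commutative information to conclude coisotropy, and the identification of the graded invariant ring with moment-map invariants will need the machinery of Borho--Brylinski and Knop. This is the step where I expect the proof to require the most care.
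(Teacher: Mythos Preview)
Your overall strategy---uniform $\PIdeg$ bound via Fact~\ref{intro:fact:UniformlyBoundedPIdegInequality}, then lower semicontinuity to propagate it, then a ring-theoretic reading of sphericity---matches the paper. But Step~2 has a genuine gap: the target $I_0$ you name cannot do the job, and the direction of the limit is wrong.

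If $I_0 = \Ann_{\univ{g}}(\rring{G/P})$ or $\Ann_{\univ{g}}(\sect(G/P,\mathcal{L}))$ for a $G$-equivariant line bundle $\mathcal{L}$, then $I_0$ annihilates a finite-dimensional $G$-module (since $G/P$ is projective) and is therefore cofinite. Then $(\univ{g}/I_0)^{G'}$ is automatically finite dimensional, so $\PIdeg((\univ{g}/I_0)^{G'})<\infty$ carries no information about $G/P$; and in your Step~3 the graded picture collapses because $\Variety(\gr I_0)=\{0\}$, not the Richardson orbit. More generally, ``degenerating the infinitesimal character to a trivial or singular one'' pushes the associated variety toward $\{0\}$, away from the orbit you need.

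The paper moves in the opposite direction: not to a degenerate limit, but to a \emph{generic} point of the closure. Via Duflo's theorem (Fact~\ref{fact:Duflo}) each $J_\lambda$ equals $\Ann_{\univ{g}}(L(\lambda'))$ for some highest weight $\lambda'$; after a pigeonhole over $W_G$ these $\lambda'$ are Zariski dense in an affine translate of $w(\lie{a}_L^*)\subset\lie{t}^*$. Pulling Theorem~\ref{intro:thm:LowerSemicontinuity} back along the continuous map $\lie{t}^*\to\Prim(\univ{g})$ of Proposition~\ref{prop:Continuity} extends the $\PIdeg$ bound to every $\lambda$ in that affine subspace. At a generic such $\lambda$ the generalized Verma module for the parabolic $P_0$ with Levi $Z_G(w(\lie{a}_L))$ is irreducible, hence equals $L(\lambda)$. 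This is the concrete module you were looking for---infinite dimensional, with associated variety the Richardson orbit closure---and sphericity of $G/P_0$ (hence of $G/P$, by Lemma~\ref{lem:SphericalAndLevi}) then follows from Lemma~\ref{lem:BoundedSphericalAlgebraic}, which is quoted from \cite{Ki20} rather than proved here. Your Step~3 is effectively an attempt to re-derive that lemma; the paper treats it as a black box.
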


Remark that the converse of Theorem \ref{intro:thm:AqLambda} has been proved in \cite[Theorem 8.9]{Ki20}.
In \cite{Ta25}, it was shown, by using the theory of visible actions, that $\overline{\mathcal{R}^j(Z\otimes \CC_{\lambda})}|_{G'_\RR}$ is multiplicity-free if $\mathcal{R}^j(Z\otimes \CC_{\lambda})$ is unitarizable and $Z$, $j$ and $\lambda$ satisfy some additional conditions.

We also consider almost irreducibility of $V|_{\lie{g'}, K'}$.
When $G'$ acts on the flag variety $G/P$ transitively, induced $G$-modules from irreducible $P$-modules are irreducible as $G'$-modules.
Similar results are known for (cohomologically) parabolically induced modules.
In \cite{Ko11}, a unitary representation $V$ of $G'_\RR$ is said to be almost irreducible if $V$ is a direct sum of irreducible subrepresentations.
See \cite[Section 3]{Ko11} and \cite{Ta25} for almost irreducible restrictions.
Our problem is the converse.
Replacing the sphericity in Theorem \ref{intro:thm:AqLambda} with the transitivity, and the boundedness of multiplicities with that of lengths of $\mathcal{R}^j(Z\otimes \CC_\lambda)|_{\lie{g'}, K'}$,
we will show the following result in Theorem \ref{thm:AqLambdaAlmostIrr}.
The proof will be done exactly parallel to Theorem \ref{intro:thm:AqLambda}.

\begin{theorem}\label{intro:thm:AqLambdaAlmostIrr}
	Let $Z$ be an $(\lie{l}, K_L)$-module of finite length and $j \in \NN$.
	If, for sufficiently many characters $\lambda$ of $(\lie{l}, K_L)$, $\mathcal{R}^j(Z\otimes \CC_{\lambda})$ is non-zero and there exists a constant $C > 0$ independent of $\lambda$ such that the length $\mathcal{R}^j(Z\otimes \CC_{\lambda})|_{\lie{g'}, K'}$ is bounded by $C$, then $G'$ acts on $G/P$ transitively.
\end{theorem}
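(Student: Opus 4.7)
The proof is designed to mirror that of Theorem~\ref{intro:thm:AqLambda} step-for-step, substituting the triple ``multiplicities / $\PIdeg$ / sphericity'' by the parallel triple ``length / length-type invariant / transitivity''. I therefore need three analogues: (i) a counterpart of Fact~\ref{intro:fact:UniformlyBoundedPIdegInequality} relating $\Len_{\lie{g'}, K'}(V|_{\lie{g'}, K'})$ to a ring-theoretic invariant $\ell((\univ{g}/\Ann_{\univ{g}}(V))^{G'})$; (ii) the lower semicontinuity of that invariant on $\Prim(\univ{g})$; and (iii) a characterization of the transitivity of $G'$ on $G/P$ by $G'$-length-boundedness of line-bundle sections.

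For (i), a natural candidate for $\ell$ is the length of the invariant algebra as a module over itself---equivalently, the number of its minimal prime ideals in the noetherian PI setting---since this controls how many isotypic types of irreducible $(\lie{g'}, K')$-modules can appear in $V|_{\lie{g'}, K'}$. For (ii), the argument for Theorem~\ref{intro:thm:LowerSemicontinuity} should be largely formal once one knows the good behavior of the invariant algebra under specialization of primitive ideals, so the same scheme ought to apply to $\ell$ with only cosmetic modifications. Combined, (i) and (ii) upgrade the hypothesis into a uniform bound on $\ell((\univ{g}/I_\lambda)^{G'})$ for $I_\lambda \coloneq \Ann_{\univ{g}}(\mathcal{R}^j(Z \otimes \CC_\lambda))$, which by lower semicontinuity persists along every specialization of the family $\set{I_\lambda}$ in $\Prim(\univ{g})$.

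I would then specialize $\lambda$ into the good range, where $\mathcal{R}^j(Z \otimes \CC_\lambda)$ is finite-dimensional and, via Borel--Weil--Bott, is realized as $\sect(G/P, \mathcal{L}_\mu)$ for an appropriate $G$-equivariant line bundle. The hypothesis then becomes a uniform bound on the $G'$-length of $\sect(G/P, \mathcal{L}_\mu)$ as $\mu$ varies. Ingredient (iii) reads: \emph{if the $G'$-length of $\sect(G/P, \mathcal{L})$ is bounded independently of the $G$-equivariant line bundle $\mathcal{L}$, then $G'$ acts transitively on $G/P$.} The contrapositive is geometric: a proper closed $G'$-orbit $Y \subsetneq G/P$ produces the $G'$-stable filtration $\sect(G/P, \mathcal{L}_\mu) \supset \sect(G/P, \mathscr{I}_Y \otimes \mathcal{L}_\mu) \supset \sect(G/P, \mathscr{I}_Y^{2} \otimes \mathcal{L}_\mu) \supset \cdots$, whose length grows without bound with the ampleness of $\mathcal{L}_\mu$ by Hilbert-polynomial estimates on the sections of $(\mathscr{I}_Y^{k}/\mathscr{I}_Y^{k+1}) \otimes \mathcal{L}_\mu$ over $Y$, contradicting the uniform bound.

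The main obstacle I anticipate is ingredient (i): the length of $V|_{\lie{g'}, K'}$ counts the number of $(\lie{g'}, K')$-isotypic types in addition to their multiplicities, and pinning down an invariant $\ell$ of $(\univ{g}/\Ann_{\univ{g}}(V))^{G'}$ that gives a genuinely two-sided bound and is amenable to the lower semicontinuity argument of (ii) is more delicate than in the PI-degree case, where the maximal dimension of irreducibles provides a clean algebraic counterpart.
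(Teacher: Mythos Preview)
Your overall architecture is right---the paper does run the argument for Theorem~\ref{intro:thm:AqLambda} step by step with substituted ingredients---but two of your three ingredients miss the mark.

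For (i), you are looking for too subtle an invariant. The paper uses nothing more than $\dim((\univ{g}/I)^{G'})$ itself: Lemma~\ref{lem:BoundLenDim} shows $\dim((\univ{g}/I)^{G'})^{1/2}\le \Len(V|_{\lie{g'}})\le C\cdot\dim((\univ{g}/I)^{G'})^{2}$ for $I=\Ann_{\univ{g}}(V)$, so finite length of the restriction is equivalent to finite dimensionality of the invariant algebra. Your proposed $\ell$ (number of minimal primes) would not give a two-sided bound, as you yourself suspected, and is unnecessary. With the correct invariant, (ii) becomes a one-liner: Proposition~\ref{prop:ContinuationFiniteDim} applied to the subspace $W=\univ{g}^{G'}$ already gives lower semicontinuity of $\lambda\mapsto\dim\tau_\lambda(\univ{g}^{G'})=\dim((\univ{g}/\Ann_{\univ{g}}(L(\lambda)))^{G'})$ on $\lie{t}^*$; one does not even need the full Jacobson-topology statement.

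Ingredient (iii) contains a genuine error. In the good range $\mathcal{R}^j(Z\otimes\CC_\lambda)$ is an irreducible unitarizable $(\lie{g},K)$-module and is typically \emph{infinite}-dimensional (think $A_{\lie{q}}(\lambda)$); it is not realized as $\sect(G/P,\mathcal{L}_\mu)$ via Borel--Weil--Bott unless $G_\RR$ is compact. So your ideal-sheaf filtration argument, while correct as a statement about algebraic $G$-modules on $G/P$, is not reachable by specializing $\lambda$. The paper bridges the gap differently: from the bound on $\dim((\univ{g}/I_\lambda)^{G'})$ and the Zariski-density of the infinitesimal characters it passes, via Duflo's theorem (Fact~\ref{fact:Duflo}) and the continuity in Proposition~\ref{prop:Continuity}, to a generic point where $L(\lambda)$ is an irreducible \emph{generalized Verma module} induced from a parabolic with Levi conjugate to $L$. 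The endgame is then Corollary~\ref{cor:FiniteLenTransitive} together with Lemma~\ref{lem:NilpotentToFlag}: $\dim((\univ{g}/I)^{G'})<\infty$ forces $\rring{O}^{G'}=\CC$ for the Richardson orbit closure $O=\mu(T^*(G/P))$, and an open $G'$-orbit in $O$ is shown (via moment-map and spherical-variety rank arguments) to be equivalent to transitivity of $G'$ on $G/P$. This replaces your section-counting argument entirely.
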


\subsection*{Notation and convention}

In this paper, any algebra except Lie algebras is unital, associative and over $\CC$.
Any algebraic group is defined over $\CC$ and any Lie algebra is finite dimensional.

We express real Lie groups and their Lie algebras by Roman letters and corresponding German letters with subscript $(\cdot)_\RR$, and express complex Lie groups (or affine algebraic groups) and their Lie algebras by Roman letters and corresponding German letters, respectively.
Similarly, we express the complexification of a real Lie algebra by the same German letter as that of the real form without any subscript.
For example, the Lie algebras of real Lie groups $G_\RR, K_\RR$ and $H_\RR$ are denoted as $\lie{g}_\RR, \lie{k}_\RR$ and $\lie{h}_\RR$ with complexifications $\lie{g}$, $\lie{k}$ and $\lie{h}$, respectively.

For a $\lie{t}$-module $V$ of a commutative Lie algebra $\lie{t}$, we denote by $\Delta(V, \lie{t})$ the set of all non-zero weights in $V$.
For a $G$-set $X$ of a group $G$, we write $X^G$ for the set of all $G$-invariant elements in $X$.
The coordinate ring of an affine variety $X$ is denoted by $\rring{X}$.

\subsection*{Acknowledgement}

This work was supported by JSPS KAKENHI Grant Number JP23K12963.

\section{Primitive spectrum}

In this section, we prepare fundamental results about continuity on the space of primitive ideals in the Jacobson topology.

\subsection{Family of irreducible highest weight modules}\label{subsection:FamilyHighestWeight}

Let $G$ be a connected reductive algebraic group over $\CC$.
Fix a Borel subgroup $B$ of $G$ and its Levi decomposition $B = TU$ with unipotent radical $U$.
Write $\overline{B} = T\overline{U}$ for the opposite Borel subgroup of $B$.

\newcommand{\UMod}[1]{\mathbb{#1}}

For $\lambda \in \lie{t}^*$, we denote by $L(\lambda)$ the irreducible highest weight module (with respect to $\lie{b}$) with the highest weight $\lambda$.
Set
\begin{align*}
	\UMod{L} \coloneq \Hom_{\CC}(\univ{n}, \CC)_{\lie{t}},
\end{align*}
where $(\cdot)_{\lie{t}}$ denotes the functor taking the space of $\lie{t}$-finite vectors.
Then, for each $\lambda \in \lie{t}^*$, we have a natural isomorphism
\begin{align*}
	\Hom_{\univ{\overline{b}}}(\univ{g}, \CC_\lambda)_{\lie{t}} \simeq \Hom_{\CC}(\univ{n}, \CC)_{\lie{t}}
	= \UMod{L}
\end{align*}
as $\lie{n}$-modules.
By this isomorphism, $\UMod{L}$ is equipped with a $\lie{g}$-module structure depending on $\lambda$.
We write $(\pi_{\lambda}, \UMod{L})$ for the $\lie{g}$-module.

\begin{proposition}\label{prop:PolynomialDualVerma}
	For $X \in \univ{g}$, the map $\pi_{\cdot}(X)$ from $\lie{t}^*$ to $\End_{\CC}(\UMod{L})$ is an $\End_{\CC}(\UMod{L})$-valued polynomial function on $\lie{t}^*$, i.e.\ $\pi_{\cdot}(X) \in \rring{\lie{t}^*} \otimes \End_{\CC}(\UMod{L})$.
\end{proposition}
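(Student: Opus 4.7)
The plan is to exploit the algebra-homomorphism property of $\pi_\lambda$ and reduce to a small set of generators. For each $\lambda$, the map $\pi_\lambda\colon\univ{g}\to\End_\CC(\UMod{L})$ is an algebra homomorphism; moreover $\rring{\lie{t}^*}\otimes\End_\CC(\UMod{L})$ forms a subalgebra of the algebra of $\End_\CC(\UMod{L})$-valued functions on $\lie{t}^*$ under pointwise composition, so the subset $\{X\in\univ{g}:\pi_\cdot(X)\in\rring{\lie{t}^*}\otimes\End_\CC(\UMod{L})\}$ is a subalgebra of $\univ{g}$. By the Poincar\'e--Birkhoff--Witt theorem it therefore suffices to verify the conclusion for $X$ in the generating set $\lie{g}$, which I decompose as $\lie{g}=\lie{n}\oplus\lie{t}\oplus\overline{\lie{n}}$ (with $\overline{\lie{n}}$ the nilradical of $\overline{\lie{b}}$). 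In each case I would use the explicit description $(\pi_\lambda(X)\varphi)(n_0)=\tilde{\varphi}_\lambda(n_0 X)$, where $\tilde{\varphi}_\lambda\colon\univ{g}\to\CC$ is defined via the PBW factorization $\univ{g}=\univ{\overline{b}}\cdot\univ{n}$ by $\tilde{\varphi}_\lambda(bn)=\lambda(b)\varphi(n)$.

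The easy cases cover $\lie{n}$ and $\lie{t}$: when $X\in\lie{n}$, the product $n_0 X$ already lies in $\univ{n}$, so $\tilde{\varphi}_\lambda(n_0 X)=\varphi(n_0 X)$ is independent of $\lambda$; when $X=H\in\lie{t}$, the identity $n_0 H=H n_0+[n_0,H]$ together with $[n_0,H]\in\univ{n}$ (since $\ad(\lie{t})$ preserves $\univ{n}$) gives $(\pi_\lambda(H)\varphi)(n_0)=\lambda(H)\varphi(n_0)+\varphi([n_0,H])$, which is linear in $\lambda$ with $\lambda$-independent coefficients in $\End_\CC(\UMod{L})$.

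The main case is $X\in\overline{\lie{n}}$. Writing $n_0 X=X n_0+[n_0,X]$ and noting that $\lambda$ vanishes on $\overline{\lie{n}}$, the first summand contributes $\tilde{\varphi}_\lambda(X n_0)=\lambda(X)\varphi(n_0)=0$, leaving $(\pi_\lambda(X)\varphi)(n_0)=\tilde{\varphi}_\lambda([n_0,X])$. The crucial step is a uniform combinatorial bound: when $[n_0,X]\in\univ{g}$ is placed into PBW normal form with respect to the ordering $\univ{\overline{n}}\cdot\univ{t}\cdot\univ{n}$, the $\univ{t}$-factor of each monomial summand has degree at most $1$, uniformly in $n_0\in\univ{n}$. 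Granting this bound, $\tilde{\varphi}_\lambda([n_0,X])$ is a polynomial of degree at most $1$ in $\lambda$, so $\pi_\cdot(X)$ lands in the finite-dimensional subspace $(\CC\oplus\lie{t})\otimes\End_\CC(\UMod{L})\subset\rring{\lie{t}^*}\otimes\End_\CC(\UMod{L})$.

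The hard part is establishing the $\univ{t}$-degree bound. I would prove it by induction on the PBW degree of $n_0$ via the Leibniz rule $[n_0 n_0',X]=[n_0,X]n_0'+n_0[n_0',X]$, carefully tracking how commutators flow through the PBW reordering. The key observation is that only a single $\overline{\lie{n}}$-factor (namely $X$) participates, so in each summand only one bracket $[\lie{n},\overline{\lie{n}}]\subseteq\lie{n}+\lie{t}+\overline{\lie{n}}$ can give rise to a $\lie{t}$-element; once such a $\lie{t}$-element has appeared, its subsequent commutations with remaining $\univ{n}$-factors produce only scalar corrections via $[\lie{t},\lie{n}]\subseteq\lie{n}$ and do not generate further $\lie{t}$-elements. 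A cleaner, more conceptual alternative would be to carry out the argument at the level of the universal family: form the $\univ{g}$-$\rring{\lie{t}^*}$-bimodule $\Hom_{\univ{\overline{b}}}(\univ{g},\rring{\lie{t}^*})_\lie{t}\cong\UMod{L}\otimes_\CC\rring{\lie{t}^*}$ for the tautological $\univ{\overline{b}}$-character on $\rring{\lie{t}^*}$, and verify directly that the action of each generator $X\in\lie{g}$ lands in the subalgebra $\End_\CC(\UMod{L})\otimes\rring{\lie{t}^*}$ of $\End_{\rring{\lie{t}^*}}(\UMod{L}\otimes\rring{\lie{t}^*})$.
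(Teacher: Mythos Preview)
Your proof is correct. The paper itself omits the argument entirely, writing only ``The assertion is standard, so we omit the proof,'' so there is nothing to compare against; you have supplied the standard details.

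A few remarks on your write-up. The reduction to generators via the subalgebra property of $\rring{\lie{t}^*}\otimes\End_\CC(\UMod{L})$ is the right move, and your treatment of $\lie{n}$ and $\lie{t}$ is clean. For $X\in\overline{\lie{n}}$, the $\univ{t}$-degree bound you state is correct and your inductive sketch is sound: writing $n_0=Y_1\cdots Y_k$ with $Y_i\in\lie{n}$ and moving the single $\overline{\lie{n}}$-factor leftward, each commutator $[Y_j,f]\in\lie{g}$ either (i) lands in $\lie{n}$ and the term is absorbed into $\univ{n}$, (ii) lands in $\lie{t}$ and, since $[\lie{t},\lie{n}]\subset\lie{n}$, reordering produces terms of $\univ{t}$-degree exactly~$1$, or (iii) lands in $\overline{\lie{n}}$ and one continues the same process with a single $\overline{\lie{n}}$-factor. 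So every PBW monomial of $n_0X$ has the shape $f'\cdot n'$, $h'\cdot n'$, or $n'$ with $f'\in\overline{\lie{n}}$, $h'\in\lie{t}$, $n'\in\univ{n}$; applying $\tilde\varphi_\lambda$ gives something affine in $\lambda$. Since $\lie{t}$ is finite-dimensional, a degree-$\leq 1$ map $\lie{t}^*\to\End_\CC(\UMod{L})$ automatically lies in $(\CC\oplus\lie{t})\otimes\End_\CC(\UMod{L})$, as you note.

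Your ``universal family'' alternative is indeed the slicker route and is equivalent in content: it packages the same computation without tracking $n_0$ by hand.
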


\begin{proof}
	The assertion is standard, so we omit the proof.
\end{proof}

It is well known and clear that $(\pi_\lambda, \UMod{L})$ is isomorphic to the $\lie{t}$-finite dual of the Verma module $\univ{g}\otimes_{\univ{\overline{b}}}\CC_{-\lambda}$.
Hence $L(\lambda)$ is isomorphic to the unique irreducible submodule of $(\pi_\lambda, \UMod{L})$ for each $\lambda \in \lie{t}^*$.
We identify $L(\lambda)$ with the submodule of $(\pi_\lambda, \UMod{L})$.
Define $v_0 \in \UMod{L}$ by
\begin{align*}
	v_0(1) = 1, \quad v_0(\lie{n}\univ{n}) = 0.
\end{align*}
Then $v_0$ is a highest weight vector of all $L(\lambda)$.
In particular, any vector $v \in L(\lambda)$ is written as $v = \pi_\lambda(Y)v_0$ ($Y \in \univ{g}$).

The following lemma is clear.

\begin{lemma}\label{lem:LinearAlgebraNdim}
	Let $V$ be a vector space and $\Phi\subset V^*$ a set of functionals such that $\bigcap_{\varphi \in \Phi} \Ker(\varphi) = 0$.
	Let $W\subset V$ be a subspace and $n \in \NN$.
	Then $\dim(W) < n$ if and only if $\det((\varphi_i(w_j))_{1\leq i,j \leq n}) = 0$ for any $w_1, \ldots, w_n \in W$ and $\varphi_1, \ldots, \varphi_n \in \Phi$.
\end{lemma}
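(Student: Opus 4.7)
The plan is to prove the two directions separately, with the forward direction being an immediate consequence of linear dependence and the reverse direction reduced to the fact that a point-separating family of functionals on a finite-dimensional space must span its dual.

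For the forward implication, suppose $\dim(W) < n$. Then any $n$ vectors $w_1, \ldots, w_n \in W$ are linearly dependent, so there exist scalars $c_1, \ldots, c_n$, not all zero, with $\sum_j c_j w_j = 0$. Applying an arbitrary $\varphi_i$ gives $\sum_j c_j \varphi_i(w_j) = 0$ for each $i$, so the columns of the matrix $(\varphi_i(w_j))_{i,j}$ are linearly dependent and its determinant vanishes.

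For the reverse implication, I will argue by contraposition: assume $\dim(W) \geq n$ and produce $w_1, \ldots, w_n \in W$ and $\varphi_1, \ldots, \varphi_n \in \Phi$ with non-vanishing determinant. Choose linearly independent vectors $w_1, \ldots, w_n \in W$ and let $W_0 \coloneq \mathrm{span}\{w_1, \ldots, w_n\}$. The hypothesis $\bigcap_{\varphi \in \Phi} \Ker(\varphi) = 0$ says that the restrictions $\{\varphi|_{W_0} : \varphi \in \Phi\} \subset W_0^*$ separate points of $W_0$: indeed, if $w \in W_0$ satisfies $\varphi(w) = 0$ for every $\varphi \in \Phi$, then $w = 0$. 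Since $W_0$ is finite-dimensional, a separating family in $W_0^*$ necessarily spans $W_0^*$, for otherwise its annihilator in $W_0^{**} \cong W_0$ would contain a non-zero vector, contradicting the separation property.

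Once $W_0^*$ is spanned by these restrictions, I can extract $\varphi_1, \ldots, \varphi_n \in \Phi$ whose restrictions to $W_0$ form a basis of $W_0^*$. Expressed in the basis $w_1, \ldots, w_n$ of $W_0$, the matrix $(\varphi_i(w_j))_{i,j}$ represents a change of basis in $W_0^*$, hence is invertible and has non-zero determinant. This contradicts the hypothesis on determinants and completes the proof. There is no substantive obstacle here; the only point requiring a moment of thought is the passage from point-separation to spanning in the dual of a finite-dimensional space, which is the standard finite-dimensional biduality statement.
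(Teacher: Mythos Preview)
Your proof is correct. The paper does not actually prove this lemma; it simply declares it ``clear'' and moves on. Your argument is the natural elementary one the author presumably had in mind, so there is nothing further to compare.
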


Set $\tau_\lambda(X) \coloneq \pi_\lambda(X)|_{L(\lambda)}$ for $X \in \univ{g}$ and $\lambda \in \lie{t}^*$.

\begin{proposition}\label{prop:ContinuationFiniteDim}
	Let $W\subset \univ{g}$ be a subspace.
	Then the map $\lie{t}^* \ni \lambda \rightarrow \dim(\tau_{\lambda}(W)) \in \NN \cup \set{\infty}$ is lower semicontinuous in the Zariski topology.
\end{proposition}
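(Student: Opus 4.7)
My plan is to reformulate the problem as the openness, for each $n \in \NN$, of the superlevel set $S_n \coloneq \set{\lambda \in \lie{t}^* : \dim(\tau_\lambda(W)) \geq n}$, and then derive this openness from Lemma \ref{lem:LinearAlgebraNdim} applied inside $\End_\CC(L(\lambda))$, using a separating family of functionals whose values on each $\tau_\lambda(X)$ depend polynomially on $\lambda$.

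Concretely, I would take the family $\Phi_\lambda \coloneq \set{\varphi_{Y, \xi} : Y \in \univ{g},\, \xi \in \UMod{L}^*}$ of elements of $\End_\CC(L(\lambda))^*$ given by $\varphi_{Y, \xi}(T) \coloneq \xi(T \pi_\lambda(Y) v_0)$. These separate points of $\End_\CC(L(\lambda))$: since $v_0$ is a cyclic highest weight vector, the vectors $\pi_\lambda(Y) v_0$ span $L(\lambda)$, while $\UMod{L}^*$ separates points of the subspace $L(\lambda) \subset \UMod{L}$. Using the identity $\tau_\lambda(X) \pi_\lambda(Y) v_0 = \pi_\lambda(XY) v_0$, Lemma \ref{lem:LinearAlgebraNdim} applied to $\tau_\lambda(W) \subset \End_\CC(L(\lambda))$ then shows that $\lambda \in S_n$ if and only if there exist $X_1, \ldots, X_n \in W$, $Y_1, \ldots, Y_n \in \univ{g}$, and $\xi_1, \ldots, \xi_n \in \UMod{L}^*$ such that
\[
\det\bigl(\xi_i(\pi_\lambda(X_j Y_i) v_0)\bigr)_{1 \leq i, j \leq n} \neq 0.
\]

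To conclude, I would observe that, for each fixed tuple $(X_i, Y_i, \xi_i)$, this determinant is a polynomial in $\lambda$: by Proposition \ref{prop:PolynomialDualVerma}, $\pi_\cdot(X_j Y_i) \in \rring{\lie{t}^*} \otimes \End_\CC(\UMod{L})$, so evaluating at the fixed vector $v_0$ and pairing with the fixed functional $\xi_i$ produces a polynomial matrix entry, whence a polynomial determinant. Its nonvanishing locus is Zariski open in $\lie{t}^*$, and $S_n$ is the union of these open sets over all admissible tuples, hence Zariski open.

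I do not foresee a serious obstacle. The one point calling for care is the pointwise separation check for $\Phi_\lambda$, since the ambient space $L(\lambda)$ itself varies with $\lambda$; realising every $L(\lambda)$ inside the fixed space $\UMod{L}$ with the fixed cyclic highest weight vector $v_0$ makes this uniform and immediate. Beyond this, the argument is a direct assembly of Lemma \ref{lem:LinearAlgebraNdim} and Proposition \ref{prop:PolynomialDualVerma}.
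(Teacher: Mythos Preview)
Your proposal is correct and follows essentially the same approach as the paper: both apply Lemma~\ref{lem:LinearAlgebraNdim} to $\tau_\lambda(W) \subset \End_\CC(L(\lambda))$ using the separating functionals $T \mapsto \xi(T\pi_\lambda(Y)v_0)$ for $Y \in \univ{g}$ and $\xi \in \UMod{L}^*$, and then invoke Proposition~\ref{prop:PolynomialDualVerma} to see that the resulting determinants are polynomial in $\lambda$. The only cosmetic difference is that the paper phrases the conclusion as closedness of the sublevel sets $\set{\lambda : \dim(\tau_\lambda(W)) \leq n}$ (an intersection of vanishing loci) rather than openness of the superlevel sets, and writes the matrix entries as $\psi_i(\pi_\lambda(w_j)\pi_\lambda(X_i)v_0)$ rather than your equivalent $\xi_i(\pi_\lambda(X_jY_i)v_0)$.
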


\begin{proof}
	Let $n \in \NN$ and set $X\coloneq \set{\lambda \in \lie{t}^* : \dim(\tau_{\lambda}(W)) \leq n}$.
	By Lemma \ref{lem:LinearAlgebraNdim}, $X$ is rewritten as
	\begin{align*}
		X = \bigcap_{\substack{w_1, \ldots, w_{n+1} \in W \\ X_1, \ldots, X_{n+1} \in \univ{g} \\ \psi_1, \ldots, \psi_{n+1} \in \UMod{L}^*}} \set{\lambda \in \lie{t}^* : \det (\psi_i(\pi_\lambda(w_j)\pi_\lambda(X_i)v_0)) = 0}.
	\end{align*}
	By Proposition \ref{prop:PolynomialDualVerma}, $X$ is closed.
\end{proof}

\subsection{Primitive spectrum}

The correspondence $\lambda \mapsto \Ann_{\univ{g}}(L(\lambda))$ gives a continuous map from $\lie{t}^*$ to the space of all primitive ideals.
We shall recall the primitive spectrum of a universal enveloping algebra $\univ{g}$ and its topology.

\begin{definition}
	For a (unital associative) $\CC$-algebra $\alg{A}$, we denote by $\Prim(\alg{A})$ the set of all primitive ideals in $\alg{A}$.
	For a subset $T \subset \Prim(\alg{A})$, its closure $\overline{T}$ is defined by
	\begin{align*}
		\overline{T}\coloneq \set{P \in \Prim(\alg{A}): P\supset \bigcap_{I \in T} I}.
	\end{align*}
	The topology on $\Prim(\alg{A})$ is called the Jacobson topology.
\end{definition}

The following proposition is known in \cite[2.2]{So90primespectrum} in more general setting.
We give a proof for the reader's convenience.

\begin{proposition}\label{prop:Continuity}
	The map $\lie{t}^* \ni \lambda \mapsto \Ann_{\univ{g}}(L(\lambda)) \in \Prim(\univ{g})$ is continuous.
\end{proposition}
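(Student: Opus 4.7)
The plan is to reduce the statement directly to Proposition \ref{prop:ContinuationFiniteDim}, which has essentially been set up for this purpose. Write $f \colon \lie{t}^* \to \Prim(\univ{g})$ for the map in question and $\tau_\lambda(X) = \pi_\lambda(X)|_{L(\lambda)}$ as in the preceding subsection.

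First, I would unpack the Jacobson topology on $\Prim(\univ{g})$ to a convenient subbasis. From the definition of closure given above, a set $T \subset \Prim(\univ{g})$ is closed iff $T = \{P : P \supset J\}$ for the two-sided ideal $J = \bigcap_{I \in T} I$. Hence a subbasis of closed sets is given by
\begin{align*}
    V(X) \coloneq \{P \in \Prim(\univ{g}) : X \in P\}, \qquad X \in \univ{g},
\end{align*}
since $\{P : P \supset J\} = \bigcap_{X \in J} V(X)$. To prove $f$ is continuous it therefore suffices to show that $f^{-1}(V(X))$ is closed in $\lie{t}^*$ for every single $X \in \univ{g}$.

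Second, I would identify this preimage explicitly. By definition,
\begin{align*}
    \lambda \in f^{-1}(V(X)) \iff X \in \Ann_{\univ{g}}(L(\lambda)) \iff \tau_\lambda(X) = 0 \iff \dim \tau_\lambda(\CC X) \leq 0.
\end{align*}
So $f^{-1}(V(X))$ is exactly the $n=0$ sublevel set appearing in Proposition \ref{prop:ContinuationFiniteDim} with $W = \CC X$. That proposition tells us the function $\lambda \mapsto \dim \tau_\lambda(W)$ is lower semicontinuous in the Zariski topology on $\lie{t}^*$, so each such sublevel set is Zariski-closed, hence closed in the usual topology as well. This completes the argument.

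There is no substantive obstacle here: once one observes that the subbasis $\{V(X)\}_{X \in \univ{g}}$ generates the Jacobson topology and that belonging to $V(X)$ is precisely the $n=0$ instance of the lower semicontinuity condition, the result is immediate from Proposition \ref{prop:ContinuationFiniteDim}. The only point to be careful about is that the closed-set description of the Jacobson topology uses arbitrary two-sided ideals of $\univ{g}$, which is why one needs a subbasis (equivalently, an intersection over generators $X$) rather than a single closed-set check.
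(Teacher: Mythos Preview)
Your proof is correct and is essentially the same as the paper's: both reduce to showing that $\{\lambda : \tau_\lambda(X) = 0\}$ is Zariski-closed via Proposition \ref{prop:ContinuationFiniteDim}, the only cosmetic difference being that the paper phrases the closed sets as $\{P : P \supset I\}$ for a two-sided ideal $I$ and then intersects over $X \in I$, whereas you pass to the subbasis $\{V(X)\}_{X}$ first. The parenthetical ``hence closed in the usual topology as well'' is unnecessary, since the relevant topology on $\lie{t}^*$ here is the Zariski one.
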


\begin{proof}
	Let $I$ be a two-sided ideal in $\univ{g}$.
	By Proposition \ref{prop:ContinuationFiniteDim}, for any $X \in I$, the set $\set{\lambda \in \lie{t}^* : X \in \Ann_{\univ{g}}(L(\lambda))} = \set{\lambda \in \lie{t}^*: \tau_{\lambda}(X) = 0}$ is closed in $\lie{t}^*$.
	This implies that $\set{\lambda \in \lie{t}^* : I \subset \Ann_{\univ{g}}(L(\lambda))}$ is closed.
	This shows the continuity of the map.
\end{proof}

The following fact is useful to reduce problem about primitive ideals and $(\lie{g}, K)$-modules to those about highest weight modules.

\begin{fact}[M.\ Duflo {\cite{Du77_primitive_ideal}}]\label{fact:Duflo}
	Let $I$ be a primitive ideal in $\univ{g}$.
	Then there exists an irreducible highest weight module $L$ such that $\Ann_{\univ{g}}(L) = I$.
	In other words, the map in Proposition \ref{prop:Continuity} is surjective.
\end{fact}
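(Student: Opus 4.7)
The plan is to reduce to a fixed central character, produce a finite family of highest weight modules whose annihilators intersect to $I$, and then exploit primeness of primitive ideals. To begin, write $I = \Ann_{\univ{g}}(V)$ for some irreducible $\univ{g}$-module $V$. A Dixmier--Schur argument, valid since $\univ{g}$ is countable-dimensional over the uncountable field $\CC$ and $V$ is cyclic, implies that $\univcent{g}$ acts on $V$ by a scalar character; by Harish-Chandra's isomorphism this character equals $\chi_\lambda$ for some $\lambda \in \lie{t}^*$. Thus $I$ contains $\Ker(\chi_\lambda)$, and the problem is confined to the fibre over $\chi_\lambda$.

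Next, apply the Jacquet functor $J$: on a module $V$ with central character $\chi_\lambda$, $J(V)$ is the space of $\lie{t}$-finite, $\overline{\lie{n}}$-locally nilpotent vectors in a suitable $\overline{\lie{n}}$-adic completion of $V$. Standard properties give that $J(V)$ is nonzero and of finite length, every simple subquotient of $J(V)$ is a highest weight module $L(\mu_i)$ with $\chi_{\mu_i} = \chi_\lambda$, and $J$ preserves annihilators: $\Ann_{\univ{g}}(J(V)) = \Ann_{\univ{g}}(V) = I$. Writing the simple subquotients as $L(\mu_1), \ldots, L(\mu_n)$, this yields $I = \bigcap_{i=1}^{n} \Ann_{\univ{g}}(L(\mu_i))$. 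Since primitive ideals in $\univ{g}$ are prime, the finite intersection forces $\prod_i \Ann_{\univ{g}}(L(\mu_i)) \subset I$, so by primeness $\Ann_{\univ{g}}(L(\mu_i)) \subset I$ for some $i$; combined with the reverse inclusion coming from the intersection, $L \coloneq L(\mu_i)$ satisfies $\Ann_{\univ{g}}(L) = I$, as required.

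The main obstacle is the faithfulness statement $\Ann_{\univ{g}}(J(V)) = \Ann_{\univ{g}}(V)$. The inclusion $\Ann_{\univ{g}}(V) \subset \Ann_{\univ{g}}(J(V))$ is formal from the construction, whereas the reverse requires producing, for every $X \in \univ{g} \setminus \Ann_{\univ{g}}(V)$, a nonzero $\lie{t}$-finite and $\overline{\lie{n}}$-locally nilpotent vector in the completion of $V$ on which $X$ acts nontrivially. This is the algebraic analogue of the existence and nontriviality of asymptotic expansions of matrix coefficients of Harish-Chandra modules, and its proof rests on the tight control of the $\overline{\lie{n}}$-action afforded by the fixed infinitesimal character $\chi_\lambda$. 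Historically, Duflo's original 1977 proof sidesteps the Jacquet functor by an induction on the rank of $\lie{g}$ combined with a careful parabolic-induction and coherent-continuation analysis.
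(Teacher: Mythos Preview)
The paper does not supply its own proof of this statement; it is quoted as a \textbf{Fact} with a citation to Duflo's 1977 paper, so there is no in-paper argument to compare yours against. I will therefore comment on your argument directly.

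Your reduction to a fixed central character via Dixmier's lemma is correct, as is the closing step that uses primeness of primitive ideals to extract a single $L(\mu_i)$ from a finite intersection of annihilators. The genuine gap is in the middle: you apply the Jacquet functor to an \emph{arbitrary} irreducible $\univ{g}$-module $V$ and invoke ``standard properties'' for nonvanishing, finite length, and annihilator preservation. Those properties are theorems for Harish-Chandra modules (or for objects of category $\BGGcat{}{}$), not for general irreducible $\univ{g}$-modules. Concretely, nonvanishing already fails: for $\lie{g} = \lieSL_2$, take an irreducible Whittaker module $V$ with respect to $\overline{\lie{n}}$, on which the generator $f \in \overline{\lie{n}}$ acts by a nonzero scalar; then $\overline{\lie{n}}^k V = V$ for every $k$, the $\overline{\lie{n}}$-adic completion vanishes, and $J(V) = 0$. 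Your own final paragraph concedes that the faithfulness step $\Ann_{\univ{g}}(J(V)) = \Ann_{\univ{g}}(V)$ is the ``main obstacle'' and locates its justification in the asymptotic theory of \emph{Harish-Chandra} modules --- precisely the hypothesis you do not have on $V$. As written, then, the proposal is an outline of one possible strategy whose decisive step is left unproved (and, as you correctly note, is not the route Duflo himself took).
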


\section{Uniformly bounded multiplicities}

In this section, we consider the branching problem of reductive Lie groups.
We will show that the supremum of multiplicities in the restriction $V|_{\lie{g'}, K'}$ is bounded by lower semicontinuous functions on $\Prim(\univ{g})$ from below and above.

\subsection{PI degree}\label{subsection:PIdeg}

We recall the notion of PI degree, which estimates non-commutativity of algebras.
We refer the reader to \cite[Chapter 13]{McRo01_noncommutative}.
We will use PI degree to describe the supremum of multiplicities in a branching law.

\begin{definition}\label{def:PIdeg}
	Let $\alg{A}$ be a (unital associative) $\CC$-algebra.
	For a $\ZZ$-coefficient non-commutative polynomial $f$, we say that $f$ is a \define{polynomial identity} of $\alg{A}$ if $f(X_1, X_2, \ldots, X_r) = 0$ for any $X_i \in \alg{A}$.
	We denote by $\PI(\alg{A})$ the set of all polynomial identities of $\alg{A}$, and set
	\begin{align*}
		\PIdeg(\alg{A}) \coloneq \sup\set{n \in \NN : \PI(\alg{A})\subset \PI(M_n(\CC))},
	\end{align*}
	which is called the \define{PI degree} of $\alg{A}$.
\end{definition}

Clearly, we have inclusions
\begin{align*}
	\PI(M_1(\CC)) \supset \PI(M_2(\CC)) \supset \PI(M_3(\CC)) \supset \cdots.
\end{align*}
It is well known (see Fact \ref{fact:AmitsurLevitzki}) that $\PIdeg(M_n(\CC)) = n$.
Hence the inclusions are strict.

It is clear that if there exists a surjective homomorphism $\alg{A}\rightarrow \alg{B}$ between algebras,
then we have $\PI(\alg{A}) \subset \PI(\alg{B})$ and $\PIdeg(\alg{A}) \geq \PIdeg(\alg{B})$.
Similarly, if there exists an injective homomorphism $\alg{A}\rightarrow \alg{B}$,
then we have $\PI(\alg{A}) \supset \PI(\alg{B})$ and $\PIdeg(\alg{A}) \leq \PIdeg(\alg{B})$.

Under some mild assumption, $\PIdeg(\alg{A})$ coincides with the supremum of the dimensions of irreducible $\alg{A}$-modules.
See \cite[Propositions 2.3 and 2.6]{Ki20}.

\begin{proposition}\label{prop:PIdegMaximumDim}
	Let $\alg{A}$ be a noetherian $\CC$-algebra of at most countable dimension.
	\begin{enumerate}
		\item Let $\set{M_i}_{i \in I}$ be a family of irreducible $\alg{A}$-modules.
		Then one has
		\begin{align*}
			\PIdeg(\alg{A}) \geq \sup\set{\dim(M_i): i \in I}.
		\end{align*}
		The equality holds if $\bigcap_i \Ann_{\alg{A}}(M_i) = 0$.
		\item $\PIdeg(\alg{A})$ coincides with the supremum of the dimensions of irreducible $\alg{A}$-modules.
	\end{enumerate}
\end{proposition}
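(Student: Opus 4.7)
The plan is to combine Jacobson's density theorem with the identity $\PIdeg(M_n(\CC)) = n$ and the behaviour of $\PIdeg$ under surjections and injections recorded just before the statement. A preliminary point: since $\dim_\CC(\alg{A})$ is countable, every irreducible $\alg{A}$-module has countable dimension, so Dixmier's form of Schur's lemma gives $\End_{\alg{A}}(M) = \CC$ for every irreducible $M$, and the density theorem applies.

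For the lower bound in (1), I would fix $M_i$ with $\dim M_i = n_i$. When $n_i < \infty$, density produces a surjection $\alg{A} \twoheadrightarrow \alg{A}/\Ann_{\alg{A}}(M_i) \simeq \End_\CC(M_i) \simeq M_{n_i}(\CC)$, hence $\PIdeg(\alg{A}) \geq n_i$. When $n_i = \infty$, for each $n \in \NN$ I would pick linearly independent $v_1, \ldots, v_n \in M_i$, set $\alg{B}_n \coloneq \set{a \in \alg{A} : a \cdot \spn{v_1, \ldots, v_n} \subset \spn{v_1, \ldots, v_n}}$, and use density to get a surjection $\alg{B}_n \twoheadrightarrow \End_\CC(\spn{v_1, \ldots, v_n}) \simeq M_n(\CC)$; the chain $\PI(\alg{A}) \subset \PI(\alg{B}_n) \subset \PI(M_n(\CC))$ forces $\PIdeg(\alg{A}) \geq n$ for every $n$. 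For the equality under $\bigcap_i \Ann_{\alg{A}}(M_i) = 0$, setting $n \coloneq \sup_i \dim M_i < \infty$ (the case $n = \infty$ is vacuous), the injection $\alg{A} \hookrightarrow \prod_i \alg{A}/\Ann_{\alg{A}}(M_i) \hookrightarrow \prod_i M_n(\CC)$ together with the elementary identity $\PI(\prod_i M_n(\CC)) = \PI(M_n(\CC))$ yields $\PIdeg(\alg{A}) \leq n$.

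Part (2) follows by applying (1) to the family of all irreducible $\alg{A}$-modules. Writing $J$ for the Jacobson radical, the equality case applied to the semiprimitive quotient $\alg{A}/J$ gives $\PIdeg(\alg{A}/J) = \sup_M \dim(M)$, so the remaining obstacle is $\PIdeg(\alg{A}) = \PIdeg(\alg{A}/J)$. The inequality $\geq$ is the quotient again. For $\leq$, which I expect to be the main technical step, I would invoke nilpotency of $J$ for noetherian PI algebras (Braun's theorem) combined with an amplification trick: starting from $f \in \PI(\alg{A}/J)$ with $f \notin \PI(M_{n+1}(\CC))$ and $J^k = 0$, the polynomial $h \coloneq \prod_{\ell=1}^{k} \bigl( x_\ell \cdot f(y_{\ell,1}, \ldots, y_{\ell,r}) \cdot z_\ell \bigr)$ in fresh variables lies in $\PI(\alg{A})$ because each bracketed factor lands in $J$, yet matrices $y_{\ell, j}, x_\ell, z_\ell \in M_{n+1}(\CC)$ can be chosen so that every bracket equals the rank-one matrix $e_{11}$ and $h$ evaluates to $e_{11} \neq 0$, forcing $h \notin \PI(M_{n+1}(\CC))$. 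The case $\PIdeg(\alg{A}/J) = \infty$ is automatic from the quotient bound.
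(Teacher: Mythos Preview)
Your outline is sound and close to how the paper handles these matters (the paper itself defers the proof to \cite[Propositions 2.3 and 2.6]{Ki20}, but your amplification polynomial $h$ is essentially the same device used later in the proof of Proposition~\ref{prop:CharacterizationPIdeg}). There is, however, one genuine gap: to obtain nilpotency of the Jacobson radical $J$ you invoke ``Braun's theorem for noetherian PI algebras'', but at this stage you have not established that $\alg{A}$ is a PI algebra. Indeed, the whole point of the step $\PIdeg(\alg{A}) \leq \PIdeg(\alg{A}/J)$ is to rule out the possibility that $\PIdeg(\alg{A}) = \infty$ while $\PIdeg(\alg{A}/J) < \infty$; in that scenario $\alg{A}$ is not PI, so Braun's hypothesis fails exactly where you need it, and the argument becomes circular.

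The fix is to replace the appeal to Braun by the argument the paper records just before Proposition~\ref{prop:CharacterizationPIdeg}: since $\dim_{\CC}\alg{A}$ is at most countable and $\CC$ is uncountable, Amitsur's theorem \cite{Am56} shows that $J$ is a nil ideal, and then Levitzki's theorem \cite{Le50} (every nil one-sided ideal in a noetherian ring is nilpotent) gives $J^C = 0$ for some $C$. Neither step requires $\alg{A}$ to be PI, so the circularity disappears and the rest of your amplification argument goes through unchanged.
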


We shall characterize $\PIdeg(\alg{A})$ by one polynomial $s_{n}$ under a mild assumption.
As a $\ZZ$-coefficient non-commutative polynomial with $n$ indeterminates, define
\begin{align*}
	s_{n}(X_1, \ldots, X_n) \coloneq \sum_{w \in \Sn_n} \sgn(w) X_{w(1)}X_{w(2)}\cdots X_{w(n)},
\end{align*}
where $\Sn_n$ is the symmetric group of degree $n$ and $\sgn$ is the signature character of $\Sn_n$.
Then the polynomials characterize $\PIdeg(M_n(\CC))$.

\begin{fact}[Amitsur--Levitzki {e.g.\ \cite[Proposition 3.2 and Theorem 3.3]{McRo01_noncommutative}}]\label{fact:AmitsurLevitzki}
	For any positive integer $n$, one has
	\begin{align}
		s_{2n}, s_{2n+1}, \ldots \in \PI(M_{n}(\CC)), \quad s_{1}, \ldots s_{2n-1} \not\in \PI(M_{n}(\CC)).
	\end{align}
\end{fact}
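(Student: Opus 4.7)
The plan is to treat the two claims separately; the non-identity half is handled by evaluating on explicit matrix units, while the identity half uses Rosset's Grassmann-algebra trick.

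For $s_{2n-1}\notin \PI(M_n(\CC))$, I would exhibit a single tuple on which $s_{2n-1}$ does not vanish. Taking the matrix units $E_{1,1}, E_{1,2}, E_{2,2}, E_{2,3}, \ldots, E_{n-1,n}, E_{n,n}$ (alternating diagonal and superdiagonal, $2n-1$ in total), the rule $E_{a,b}E_{c,d}=\delta_{bc}E_{a,d}$ forces every nonzero monomial $X_{w(1)}\cdots X_{w(2n-1)}$ in the expansion to correspond to a unique ordering: $E_{1,1}$ has no legal predecessor, each $E_{i,i}$ must appear immediately after $E_{i-1,i}$, and each $E_{i,i+1}$ immediately after $E_{i,i}$. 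Hence only the identity permutation survives, and $s_{2n-1}$ evaluates to $E_{1,n}\neq 0$.

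For the identity direction $s_{2n}\in \PI(M_n(\CC))$, let $\Lambda$ denote the exterior $\CC$-algebra on anticommuting generators $\xi_1,\ldots,\xi_{2n}$, declared to commute with $M_n(\CC)$ inside $\Lambda\otimes M_n(\CC)$, and set $A=\sum_{i=1}^{2n} \xi_i\otimes X_i$. Expanding $A^{2n}$ and sorting the $\xi$'s with their signs gives
\begin{equation*}
	A^{2n} = (\xi_1 \xi_2 \cdots \xi_{2n})\otimes s_{2n}(X_1,\ldots,X_{2n}),
\end{equation*}
so it suffices to prove $A^{2n}=0$. The key observation is that $B\coloneq A^2$ has entries in the \emph{even} subalgebra $\Lambda^{\mathrm{ev}}$, which is commutative, so $B$ is an honest $n\times n$ matrix over a commutative $\CC$-algebra and the Cayley--Hamilton theorem applies. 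By Newton's identities, the coefficients of its characteristic polynomial are polynomials in $\operatorname{tr}(B^k)=\operatorname{tr}(A^{2k})$ for $1\le k\le n$, so the whole argument reduces to the key lemma $\operatorname{tr}(A^m)=0$ for even $m$. This follows in one line: cyclic invariance of trace gives $\operatorname{tr}(X_{i_1}\cdots X_{i_m})=\operatorname{tr}(X_{i_m}X_{i_1}\cdots X_{i_{m-1}})$, while moving $\xi_{i_m}$ past $\xi_{i_1}\cdots\xi_{i_{m-1}}$ contributes $(-1)^{m-1}$, so $\operatorname{tr}(A^m)=(-1)^{m-1}\operatorname{tr}(A^m)$. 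Hence the characteristic polynomial of $B$ is $x^n$, so $B^n=0$, i.e., $A^{2n}=0$, and $s_{2n}(X_1,\ldots,X_{2n})=0$.

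The remaining containments $s_{2n+1}, s_{2n+2},\ldots \in \PI(M_n(\CC))$ follow inductively from the cofactor-style identity $s_{m+1}(X_1,\ldots,X_{m+1})=\sum_{i=1}^{m+1}(-1)^{i-1}X_i\, s_m(X_1,\ldots,\widehat{X_i},\ldots,X_{m+1})$. The main obstacle is the sign bookkeeping for the anticommuting variables in both the expansion of $A^{2n}$ and the trace identity; once those are under control, everything else is standard combinatorics and commutative algebra.
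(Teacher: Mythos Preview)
Your argument is correct. The paper itself does not prove this statement at all: it is recorded as a cited fact from McConnell--Robson, so there is no ``paper's proof'' to compare against. What you have written is essentially Rosset's proof of the Amitsur--Levitzki theorem via the Grassmann algebra, together with the classical matrix-unit evaluation for the non-identity half; both are standard and your sign bookkeeping checks out (the key point $\operatorname{tr}(A^m)=(-1)^{m-1}\operatorname{tr}(A^m)$ is exactly right, and since $B^k=A^{2k}$ only even $m$ are needed).

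Two small remarks. First, you only explicitly show $s_{2n-1}\notin\PI(M_n(\CC))$, not $s_1,\ldots,s_{2n-2}$; but the cofactor identity you already state for the other direction gives the contrapositive immediately (if some $s_m$ were an identity then so would $s_{2n-1}$ be), so this is a one-line addition. Second, a close cousin of your matrix-unit computation does appear in the paper, not here but in the proof of Proposition~\ref{prop:CharacterizationPIdeg}, where the evaluation $s_{2n}(E_{1,1},E_{1,2},\ldots,E_{n,n},E_{n,n+1})E_{n+1,1}=E_{1,1}$ in $M_k(\CC)$ with $k>n$ is used; your $s_{2n-1}$ computation is the same staircase idea truncated one step earlier.
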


Let $\alg{A}$ be a noetherian $\CC$-algebra of at most countable dimension.
We denote by $\Jacobson(\alg{A})$ the Jacobson radical of $\alg{A}$.
By \cite[Corollary 3]{Am56}, $\Jacobson(\alg{A})$ is a nil ideal, i.e.\ any element in $\Jacobson(\alg{A})$ is nilpotent.
By Levitzky's theorem \cite[Theorem 5]{Le50}, any nil ideal in the noetherian ring $\alg{A}$ is nilpotent.
Hence there exists a constant $C \in \NN$ such that $\Jacobson(\alg{A})^C = 0$.

\begin{proposition}\label{prop:CharacterizationPIdeg}
	Let $n \in \NN$.
	Then $\PIdeg(\alg{A}) \leq n$ if and only if $(s_{2n}\cdot X_{2n+1})^C \in \PI(\alg{A})$.
\end{proposition}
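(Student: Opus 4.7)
My plan is to handle both directions by passing to the semiprimitive quotient $\alg{B} := \alg{A}/\Jacobson(\alg{A})$ and exploiting $\Jacobson(\alg{A})^C = 0$. The extra factor $X_{2n+1}$ and the exponent $C$ in the statement are precisely what is needed to absorb the Jacobson radical in one direction and to support a trace-trick argument in the other.

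For the forward implication, assume $\PIdeg(\alg{A}) \leq n$. By Proposition \ref{prop:PIdegMaximumDim}, every irreducible $\alg{A}$-module has $\CC$-dimension at most $n$. For each primitive ideal $P$ of $\alg{B}$, the primitive ring $\alg{B}/P$ acts faithfully on some irreducible module $M$ with $\dim M \leq n$. Since $\alg{A}$ has at most countable dimension, Dixmier's version of Schur's lemma gives $\End_{\alg{A}}(M) = \CC$, and the Jacobson density theorem in finite dimension yields $\alg{B}/P \simeq M_{\dim M}(\CC)$, which embeds into $M_n(\CC)$. By Fact \ref{fact:AmitsurLevitzki}, $s_{2n} \in \PI(M_n(\CC)) \subset \PI(\alg{B}/P)$. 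Since $\alg{B}$ is semiprimitive, $\alg{B} \hookrightarrow \prod_P \alg{B}/P$ and therefore $s_{2n} \in \PI(\alg{B})$. Consequently $s_{2n}(X_1,\ldots,X_{2n}) \in \Jacobson(\alg{A})$ for all $X_i \in \alg{A}$, whence $s_{2n}(X_1,\ldots,X_{2n}) X_{2n+1} \in \Jacobson(\alg{A})$ and its $C$-th power vanishes.

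For the backward implication, assume $(s_{2n} X_{2n+1})^C \in \PI(\alg{A})$ and, for contradiction, suppose that some irreducible $\alg{A}$-module $M$ satisfies $\dim M \geq n+1$. Choose an $(n+1)$-dimensional subspace $N \subset M$ with basis $v_1,\ldots,v_{n+1}$ and identify $\End_{\CC}(N) = M_{n+1}(\CC)$. By Fact \ref{fact:AmitsurLevitzki}, $s_{2n} \notin \PI(M_{n+1}(\CC))$, so there are $A_1,\ldots,A_{2n} \in M_{n+1}(\CC)$ with $a := s_{2n}(A_1,\ldots,A_{2n}) \neq 0$. Pick $u,v \in N$ with $u^{\ast} a v \neq 0$ and set $B := v u^{\ast} \in M_{n+1}(\CC)$; then $aB$ is a rank-one matrix with nonzero trace, hence non-nilpotent, so $(aB)^C \neq 0$. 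By the Jacobson density theorem applied to the primitive ring $\alg{A}/\Ann_{\alg{A}}(M)$, there exist $X_1,\ldots,X_{2n+1} \in \alg{A}$ that act on the basis $v_1,\ldots,v_{n+1}$ as $A_1,\ldots,A_{2n},B$ respectively. Because $X_i v_j = A_i v_j \in N$, a short induction shows that every noncommutative polynomial in $X_1,\ldots,X_{2n+1}$ acts on each $v_j$ by the corresponding matrix polynomial. In particular, $(s_{2n}(X_1,\ldots,X_{2n}) X_{2n+1})^C v_j = (aB)^C v_j$, which is nonzero for some $j$, contradicting $(s_{2n} X_{2n+1})^C \in \PI(\alg{A})$.

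The main obstacle is the backward direction: Amitsur--Levitzki only provides a nonzero value of $s_{2n}$ on $M_{n+1}(\CC)$, whereas the statement demands nonvanishing of the $C$-th power of $s_{2n}\cdot X_{2n+1}$ for an arbitrary $C$. The rank-one trace trick is the device that converts a nonzero $a$ into a product $aB$ that is non-nilpotent uniformly in $C$; the Jacobson density step, together with the observation that the chosen lifts preserve $N$ on basis vectors, then transports the matrix-level counterexample back into $\alg{A}$.
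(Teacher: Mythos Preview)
Your proof is correct and follows essentially the same strategy as the paper: for the forward direction both arguments show $s_{2n}\cdot X_{2n+1} \in \PI(\alg{A}/\Jacobson(\alg{A}))$ via an embedding into a product of small matrix rings and then invoke $\Jacobson(\alg{A})^C = 0$; for the backward direction both produce, via Jacobson density on an irreducible module of dimension $>n$, elements whose value under $s_{2n}\cdot X_{2n+1}$ is non-nilpotent. The only tactical difference is that the paper exhibits the explicit idempotent $s_{2n}(E_{1,1},E_{1,2},\ldots,E_{n,n},E_{n,n+1})E_{n+1,1}=E_{1,1}$, whereas you use a rank-one trace argument to obtain a non-nilpotent $aB$; your version has the mild advantage of treating the finite- and infinite-dimensional cases uniformly rather than splitting them as the paper does.
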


\begin{proof}
	Assuming that $m\coloneq \PIdeg(\alg{A}) \leq n$, we shall show $(s_{2n}\cdot X_{2n+1})^C \in \PI(\alg{A})$.
	By Proposition \ref{prop:PIdegMaximumDim}, we have $\PIdeg(\alg{A}/\Jacobson(\alg{A})) = m$
	and there exists a family $\set{(\pi_i, V_i)}_{i\in I}$ of irreducible $\alg{A}$-modules
	such that the homomorphism $\prod_{i} \pi_i \colon \alg{A}/\Jacobson(\alg{A}) \hookrightarrow \prod_i \End_{\CC}(V_i)$ is injective and $\sup_{i \in I} \dim(V_i) = m$.
	This implies that $\PI(\alg{A}/\Jacobson(\alg{A})) = \PI(M_m(\CC)) \ni s_{2n} \cdot X_{2n+1}$.
	Hence we have $(s_{2n}\cdot X_{2n+1})^C \in \PI(\alg{A})$ by $\Jacobson(\alg{A})^C = 0$.

	Assume that $\PIdeg(\alg{A}) > n$.
	By Proposition \ref{prop:PIdegMaximumDim}, there exists an irreducible $\alg{A}$-module $(\pi, V)$ of dimension strictly greater than $n$.
	We shall consider the case of $k\coloneq \dim(V) < \infty$.
	Then 
	\begin{align*}
		s_{2n}(E_{1,1}, E_{1,2}, E_{2,2}, E_{2,3}, \ldots, E_{n,n}, E_{n,n+1})E_{n+1, 1} = E_{1,1}
	\end{align*}
	is an idempotent in $M_k(\CC) \simeq \End_{\CC}(V)$, where $E_{i,j}$'s are the matrix units.
	This shows that $(s_{2n}\cdot X_{2n+1})^C$ is not a polynomial identity of $\End_{\CC}(V)$
	and hence $(s_{2n}\cdot X_{2n+1})^C \not \in \PI(\alg{A})$.

	The proof for $\dim(V) = \infty$ is essentially the same as above by using the Jacobson density theorem, so we omit the proof.
\end{proof}

\subsection{Uniformly bounded multiplicities}\label{subsection:UniformlyBounded}

In \cite{Ki20}, we have given a relation between $\PIdeg$ and the supremum of multiplicities.
We shall recall the results.

Let $G_\RR$ be a connected semisimple Lie group with finite center and $\theta$ a Cartan involution of $G_\RR$.
Set $K_\RR\coloneq G_\RR^\theta$.
Let $G'_\RR$ be a $\theta$-stable connected reductive subgroup of $G_\RR$ and set $K'_\RR\coloneq G'_\RR\cap K_\RR$.
Write $G$ for the inner automorphism group of $\lie{g}$
and $G'$ for the Zariski closure of $\Ad_{\lie{g}}(G'_\RR)$.
Assume that $G'$ is a complexification of $\Ad_{\lie{g}}(G'_\RR)$.
In other words, $\lie{g'}$ is algebraic in $\lie{g}$.
Let $K$ and $K'$ denote the complexifications of $K_\RR$ and $K'_\RR$, respectively.
Then we have a pair $(\lie{g}, K)$ and its subpair $(\lie{g'}, K')$.

Let $B$ be a Borel subgroup of $G$ such that $B'\coloneq G'\cap B$ is a Borel subgroup of $G'$.
Fix a maximal torus $T\subset B$ such that $T'\coloneq T\cap B'$ is a maximal torus of $B'$.
In this subsection, we consider highest weight modules and the BGG category $\BGGcat{}{}$ for these Borel subgroups.
Recall that any irreducible highest weight $\lie{g}$-module is discretely decomposable as a $\lie{g'}$-module.
See \cite{Ko12_generalized_Verma}.

We shall recall the notion of representations with uniformly bounded multiplicities.

\begin{definition}
	Let $\Hilbert{H}$ be a unitary representation of $G'_\RR$ with irreducible decomposition
	\begin{align}
		\Hilbert{H} \simeq \int_{\widehat{G'_\RR}}^\oplus \Hilbert{H}_{\pi}\toptensor \Hilbert{M}_\pi d\mu(\pi), \label{eqn:DirectIntegral}
	\end{align}
	where $\Hilbert{H}_\pi$ is a representation space of $\pi \in \widehat{G'_\RR}$ and $\Hilbert{M}_\pi$ is a Hilbert space.
	We say that $\Hilbert{H}$ has uniformly bounded multiplicities if the essential supremum of $\dim(\Hilbert{M}_\pi)$ is finite.
\end{definition}

For example, a multiplicity-free unitary representation has uniformly bounded multiplicities.

\begin{definition}
	Let $V$ be a $\lie{g'}$-module.
	We say that $V$ has uniformly bounded multiplicities if the supremum of $\dim(\Hom_{\lie{g'}}(V, W))$ for irreducible $\lie{g'}$-modules $W$ is finite.
\end{definition}

When we treat a $(\lie{g'}, K')$-module $V$, it is enough to consider $(\lie{g'}, K')$-modules as $W$.
To deal with the uniform boundedness of multiplicities by $\PIdeg$, we have shown the following result in \cite[Corollary 7.20]{Ki23}.
We denote by $\univcent{g'}$ the center of $\univ{g'}$.

\begin{theorem}\label{thm:FiniteLength}
	Let $V$ be an irreducible $(\lie{g}, K)$-module or an irreducible highest weight module, and $I$ a maximal ideal of $\univcent{g'}$.
	Let $k \in \NN$.
	Then the $(\univ{g'}\otimes \univ{g}^{G'})$-module $V/I^kV$ has finite length
	and the length is bounded by a constant $C$ independent of $I$ and $V$.
\end{theorem}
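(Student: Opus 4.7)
The plan is to use PBW-filtration techniques to reduce the problem to a commutative-algebra statement on the coadjoint space $\lie{g}^{*}$, and then to exploit the finiteness of nilpotent coadjoint orbits to ensure that the length bound does not depend on $V$ or $I$.

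First, I would equip $V$ with a good $\univ{g}$-filtration, so that $\gr V$ is a finitely generated $S(\lie{g})$-module whose support is the associated variety $\AV(V) \subset \lie{g}^{*}$. Since $V$ is irreducible (either as a $(\lie{g},K)$-module or as an irreducible highest weight module), $\AV(V)$ is contained in the nilpotent cone $\mathcal{N}(\lie{g}) \subset \lie{g}^{*}$. The filtration descends to $V/I^{k}V$, and a standard Rees-algebra/Bernstein-type argument bounds the length of $V/I^{k}V$ as a $(\univ{g'}\otimes \univ{g}^{G'})$-module by the length of $\gr(V/I^{k}V)$ as a module over the commutative algebra $S(\lie{g'})\otimes S(\lie{g})^{G'}$.

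Second, the support of $\gr(V/I^{k}V)$ is contained in the intersection of $\AV(V)$ with the preimage, under the restriction map $\lie{g}^{*}\to \lie{g'}^{*}$ followed by the quotient $\lie{g'}^{*}\to \lie{g'}^{*}\GIT G'$, of the point determined by the symbol of $I\subset \univcent{g'}\simeq \rring{\lie{g'}^{*}\GIT G'}$. Because $\AV(V)\subset \mathcal{N}(\lie{g})$ and there are only finitely many $G$-orbits in $\mathcal{N}(\lie{g})$, the resulting scheme lies in a fixed closed subvariety depending only on the pair $(\lie{g},\lie{g'})$, and the length of $S(\lie{g'})\otimes S(\lie{g})^{G'}$-modules supported there admits a uniform upper bound by generic-fibre/flatness considerations.

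The main obstacle is controlling the multiplicities of the associated cycle of $V$, which can a priori grow without bound as $V$ varies. The crux, corresponding to the technical heart of \cite[Corollary 7.20]{Ki23}, is that passing to the $I$-adic quotient trivialises the contribution from the $\univcent{g'}$-direction inside $S(\lie{g'})\otimes S(\lie{g})^{G'}$; after this reduction, the remaining length is governed only by the transversal complexity of the moment-like map $\lie{g}^{*}\to \lie{g'}^{*}\times(\lie{g}^{*}\GIT G')$ over nilpotent strata, which is a finite geometric invariant of $(\lie{g},\lie{g'})$. Uniformity in $I$ then follows from upper semicontinuity of the fibre length as $I$ varies over maximal ideals of $\univcent{g'}$.
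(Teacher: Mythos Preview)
Note first that the paper does not prove this theorem; it is quoted from \cite[Corollary 7.20]{Ki23}, whose methods are based on Beilinson--Bernstein localization and families of holonomic $\mathscr{D}$-modules on (partial) flag varieties, not on PBW filtrations applied directly to $V$. There the uniform length bound ultimately comes from the finite orbit combinatorics on the flag variety, which controls characteristic cycles of a family of holonomic $\mathscr{D}$-modules independently of the particular $V$.

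Your proposal has a genuine gap exactly where you flag it. Knowing the support of $\gr(V/I^{k}V)$ inside $\lie{g}^{*}$ does not bound its length as an $S(\lie{g'})\otimes S(\lie{g})^{G'}$-module: length sees multiplicities, and a module supported on a fixed subvariety, even a single point, can have arbitrarily large length (e.g.\ $\CC[x]/(x^{n})$ over $\CC[x]$). So the sentence ``the length of $S(\lie{g'})\otimes S(\lie{g})^{G'}$-modules supported there admits a uniform upper bound by generic-fibre/flatness considerations'' is false without additional input. You correctly identify this in your third paragraph, but what you offer there is a hope, not a mechanism: ``passing to the $I$-adic quotient trivialises the contribution from the $\univcent{g'}$-direction'' does not explain why the multiplicity of $\gr V$ along the components of $\AV(V)$---which genuinely grows without bound over irreducible highest weight modules, being tied to Goldie-rank / Kazhdan--Lusztig data---should disappear. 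Since $\gr(V/I^{k}V)$ is a quotient of $\gr V$, it inherits whatever multiplicity $\gr V$ carries over the relevant fibre, and nothing in the PBW picture removes that. The $\mathscr{D}$-module approach in \cite{Ki23} sidesteps this entirely: holonomicity plus the finite stratification on the flag variety yields a combinatorial, hence uniform, length bound that never references the associated-cycle multiplicities of $V$.
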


Using Theorem \ref{thm:FiniteLength}, we can give an upper bound of multiplicities by $\PIdeg$ as follows.
Let $V$ be an irreducible $(\lie{g}, K)$-module and set $I \coloneq \Ann_{\univ{g}}(V)$.
Suppose that $n \coloneq \PIdeg((\univ{g}/I)^{G'}) < \infty$.
Then the dimension of any irreducible $(\univ{g}/I)^{G'}$-module is less than or equal to $n$ by Proposition \ref{prop:PIdegMaximumDim}.
By Theorem \ref{thm:FiniteLength}, we obtain
\begin{align*}
	\dim \Hom_{\lie{g'}, K'}(V|_{\lie{g'}, K'}, V') &= \dim \Hom_{\lie{g'}, K'}(V/\Ann_{\univ{g'}}(V')V, V') \\
	&\leq C\cdot \PIdeg((\univ{g}/I)^{G'})
\end{align*}
for any irreducible $(\lie{g'}, K')$-module $V'$.
We have also given a lower bound of the supremum of multiplicities by $\PIdeg$ in the previous paper.
The following theorems are in \cite[Theorems 1.2 and 7.11]{Ki20}.

\begin{theorem}\label{thm:UniformlyBoundedPIdeg}
	Let $V$ be a non-zero $(\lie{g}, K)$-module of finite length or an object in the BGG category $\BGGcat{}{}$.
	Then the following conditions are equivalent.
	\begin{enumerate}
		\item $V|_{\lie{g'}}$ has uniformly bounded multiplicities.
		\item $\PIdeg((\univ{g}/\Ann_{\univ{g}}(V))^{G'}) < \infty$.
	\end{enumerate}
	If $V$ is a unitarizable $(\lie{g}, K)$-module and $\overline{V}$ is the Hilbert completion, the above conditions are equivalent to
	\begin{enumerate}\setcounter{enumi}{2}
		\item $\overline{V}|_{G'_\RR}$ has uniformly bounded multiplicities.
	\end{enumerate}
\end{theorem}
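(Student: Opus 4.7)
The plan is to establish two-sided estimates relating $\sup_{V'} \dim \Hom_{\lie{g'}, K'}(V, V')$ to $\PIdeg(\alg{A})$, where $\alg{A} = (\univ{g}/\Ann_{\univ{g}}(V))^{G'}$ and $V'$ ranges over irreducible $(\lie{g'}, K')$-modules. These bounds give the equivalence $(1) \Leftrightarrow (2)$ at once.

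For $(2) \Rightarrow (1)$, I would follow the sketch already given in the paragraph preceding the theorem. For each irreducible $(\lie{g'}, K')$-module $V'$, let $I' \subset \univcent{g'}$ be the kernel of its infinitesimal character. Theorem \ref{thm:FiniteLength} bounds the length of the $(\univ{g'} \otimes \univ{g}^{G'})$-module $V/(I')^k V$ by a constant $C$ independent of $V'$, $V$, and $k$; for $k$ large, $\Hom_{\lie{g'}, K'}(V, V') = \Hom_{\lie{g'}, K'}(V/(I')^k V, V')$. Since $\alg{A}$ commutes with $(\lie{g'}, K')$, it acts on this Hom space, and Proposition \ref{prop:PIdegMaximumDim} applied to the image of $\alg{A}$ in $\End_\CC(\Hom_{\lie{g'}, K'}(V, V'))$ yields $\dim \Hom_{\lie{g'}, K'}(V, V') \leq C \cdot \PIdeg(\alg{A})$.

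The direction $(1) \Rightarrow (2)$ is the main obstacle. I would use the Amitsur--Levitzki characterization (Proposition \ref{prop:CharacterizationPIdeg}). The starting observation is that $\alg{A}$ acts as $(\lie{g'}, K')$-endomorphisms of $V$ and the action is faithful. If $V$ were discretely decomposable under $(\lie{g'}, K')$ with multiplicities bounded by $N$, the embedding $\alg{A} \hookrightarrow \End_{\lie{g'}, K'}(V) \subset \prod_{V'} M_N(\CC)$ would give $\PIdeg(\alg{A}) \leq N$ immediately. The difficulty is that $V$ need not be discretely decomposable. To circumvent this, pass to the finite-length quotients $V/(I')^k V$ from Theorem \ref{thm:FiniteLength}: each such quotient, being of finite length and having $(\lie{g'}, K')$-multiplicity spaces that are quotients of $\Hom_{\lie{g'}, K'}(V, V')$, has $(\lie{g'}, K')$-multiplicities bounded by $N$, so the image of $\alg{A}$ in $\End_\CC(V/(I')^k V)$ satisfies the polynomial identity of $M_{CN}(\CC)$. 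Running over all $V'$ and $k$ produces a common polynomial identity for the image of $\alg{A}$ in $\prod \End_\CC(V/(I')^k V)$; Duflo's theorem (Fact \ref{fact:Duflo}) together with Proposition \ref{prop:PIdegMaximumDim} ensures that the kernel of this map is contained in the Jacobson radical, which is nilpotent by the Amitsur--Levitzky discussion recalled before Proposition \ref{prop:CharacterizationPIdeg}. Proposition \ref{prop:CharacterizationPIdeg} then delivers $\PIdeg(\alg{A}) < \infty$.

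For the BGG-category variant, discrete decomposability of highest-weight modules under $(\lie{g'}, K')$ recalled in the text removes the main obstacle and the previous argument simplifies. For the unitary case $(3)$, the $K_\RR$-finite vectors of $\overline{V}$ form $V$ itself, and by admissibility of the direct integral decomposition the essential supremum of $\dim \Hilbert{M}_\pi$ in (\ref{eqn:DirectIntegral}) coincides with $\sup_{V'} \dim \Hom_{\lie{g'}, K'}(V, V')$, giving $(1) \Leftrightarrow (3)$.
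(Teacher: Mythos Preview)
The paper does not prove this theorem; it is quoted from \cite[Theorems 1.2 and 7.11]{Ki20}. Only the implication $(2)\Rightarrow(1)$ is sketched in the paragraph preceding the statement, and your outline for that direction follows that sketch.

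For $(1)\Rightarrow(2)$ there is no argument in the present paper to compare against, and your sketch has genuine gaps. The central claim---that the image of $\alg{A}$ in $\End_\CC(V/(I')^kV)$ satisfies the polynomial identity of $M_{CN}(\CC)$---does not follow from what you write. Theorem~\ref{thm:FiniteLength} bounds the length of $V/(I')^kV$ as a $(\univ{g'}\otimes\univ{g}^{G'})$-module, with irreducible subquotients of the form $L'\boxtimes L''$; to control the $\alg{A}$-action you would need $\dim L''\leq N$ for every such subquotient. But hypothesis $(1)$ bounds only $\dim\Hom_{\lie{g'}}(V,W)$, i.e.\ quotient-multiplicities, and an intermediate $L'\boxtimes L''$ is merely a subquotient of $V$, not a quotient, so its $\dim L''$ is not controlled. (Incidentally, $\Hom_{\lie{g'},K'}(V/(I')^kV,V')$ is a \emph{subspace} of $\Hom_{\lie{g'},K'}(V,V')$, not a quotient.) The appeal to Fact~\ref{fact:Duflo} is also misplaced: Duflo's theorem concerns primitive ideals of $\univ{g}$, not of $\alg{A}$, and yields no information about the kernel of $\alg{A}\to\prod\End_\CC(V/(I')^kV)$ or about $\bigcap_{I',k}(I')^kV$; passing to a highest weight module $L(\lambda)$ with the same annihilator does not help, since $(1)$ is a hypothesis on multiplicities in $V$, not in $L(\lambda)$. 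Finally, in your treatment of $(3)$ you assert that $\esssup_\pi\dim\Hilbert{M}_\pi$ equals $\sup_{V'}\dim\Hom_{\lie{g'},K'}(V,V')$; this comparison of analytic and algebraic multiplicities is itself one of the nontrivial inequalities packaged in Theorem~\ref{thm:UniformlyBoundedPIdegInequality} and cannot simply be assumed.
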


\begin{theorem}\label{thm:UniformlyBoundedPIdegInequality}
	Let $V$ be an irreducible $(\lie{g}, K)$-module or an irreducible highest weight module.
	Then there exists a constant $C>0$ independent of $V$ such that
	\begin{align*}
		\PIdeg((\univ{g}/\Ann_{\univ{g}}(V))^{G'}) &\leq \esssup\set{\dim(\Hilbert{M}_\pi):\pi \in \widehat{G'_\RR}} \\
		&\leq \sup_{W} \dim(\Hom_{\lie{g'}}(V|_{\lie{g'}}, W)) \\
		&\leq C\cdot \PIdeg((\univ{g}/\Ann_{\univ{g}}(V))^{G'}),
	\end{align*}
	where $\Hilbert{M}_\pi$ is defined in Definition \ref{eqn:DirectIntegral}
	and $W$ runs over all irreducible $\lie{g'}$-modules.
\end{theorem}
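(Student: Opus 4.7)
The plan is to establish the three inequalities separately. The rightmost is purely algebraic, while the other two pass through the direct integral decomposition \eqref{eqn:DirectIntegral}.

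For the rightmost inequality, I follow the argument sketched at the end of Subsection~\ref{subsection:UniformlyBounded}. Given any irreducible $\lie{g'}$-module $W$, let $I \subset \univcent{g'}$ be a maximal ideal annihilating $W$. By Theorem~\ref{thm:FiniteLength}, the $(\univ{g'} \otimes \univ{g}^{G'})$-module $V/I^kV$ has length bounded by a constant $C$ independent of $V$ and $I$. Each simple subquotient decomposes as an outer tensor product of an irreducible $\lie{g'}$-module with an irreducible $\univ{g}^{G'}$-module; by Proposition~\ref{prop:PIdegMaximumDim}, the latter has dimension at most $\PIdeg((\univ{g}/\Ann_{\univ{g}}(V))^{G'})$. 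Pairing via Schur's lemma with $W$ bounds $\dim \Hom_{\lie{g'}}(V|_{\lie{g'}}, W)$ by the stated product.

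For the middle inequality, I assume $V$ is unitarizable (otherwise the middle term is taken to be $\infty$ and there is nothing to prove). For $\mu$-almost every $\pi$ in the support of the decomposition of $\overline{V}$, $\Hilbert{M}_\pi$ is identified with a Hilbert space of bounded $G'_\RR$-intertwiners $\Hilbert{H}_\pi \to \overline{V}$. Restricting any such intertwiner to the Harish-Chandra module $W_\pi \coloneq (\Hilbert{H}_\pi)_{K'\text{-fin}}$ yields a $(\lie{g'}, K')$-intertwiner into $V$, and this restriction is injective by density of $W_\pi$ in $\Hilbert{H}_\pi$. Hence $\dim \Hilbert{M}_\pi \leq \dim \Hom_{\lie{g'}, K'}(V|_{\lie{g'}, K'}, W_\pi)$ for almost every $\pi$, which after taking essential supremum gives the inequality.

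For the leftmost inequality, set $\alg{A} \coloneq (\univ{g}/\Ann_{\univ{g}}(V))^{G'}$, which by reductivity of $G'$ equals the image of $\univ{g}^{G'}$ in $\univ{g}/\Ann_{\univ{g}}(V)$. Since elements of $\univ{g}^{G'}$ commute with the $G'_\RR$-action on $\overline{V}$, each acts diagonally in the decomposition \eqref{eqn:DirectIntegral}, preserving $\Hilbert{M}_\pi$ for almost every $\pi$. If $N \coloneq \esssup \dim \Hilbert{M}_\pi$ is finite, the collective action of $\alg{A}$ on the essentially-bounded measurable field of operators is faithful---its kernel would annihilate all of $\overline{V}$, hence lies in $\Ann_{\univ{g}}(V) \cap \univ{g}^{G'}$---while each fiber satisfies every polynomial identity of $M_N(\CC)$. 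Therefore $\alg{A}$ itself satisfies $\PI(M_N(\CC))$, and Fact~\ref{fact:AmitsurLevitzki} yields $\PIdeg(\alg{A}) \leq N$. The main obstacle is the analytic content of these last two steps: the rigorous identification of multiplicity spaces with intertwiner spaces requires a measurable field of representatives, and ensuring the action of $\alg{A}$ on these fields is well-defined almost everywhere and jointly faithful relies on standard but nontrivial facts about type I direct integral decompositions of Harish-Chandra modules. The algebraic rightmost inequality is considerably cleaner because Theorem~\ref{thm:FiniteLength} already packages the difficult work.
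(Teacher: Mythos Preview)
The paper does not give its own proof of this theorem; it is quoted from \cite[Theorems 1.2 and 7.11]{Ki20}. The only argument appearing in the present paper is the sketch of the rightmost inequality immediately preceding the statement, and your treatment of that inequality reproduces this sketch correctly.

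Your argument for the middle inequality has a directional error. You identify $\Hilbert{M}_\pi$ with bounded $G'_\RR$-intertwiners $\Hilbert{H}_\pi \to \overline{V}$ and restrict to $W_\pi$; this produces elements of $\Hom_{\lie{g'}, K'}(W_\pi,\,\cdot\,)$, i.e.\ maps \emph{from} $W_\pi$, yet you then conclude a bound on $\dim \Hom_{\lie{g'}, K'}(V, W_\pi)$, which records maps \emph{to} $W_\pi$. Moreover, in the direction you wrote, the image of a $K'$-finite vector under a $K'$-equivariant map into $\overline{V}$ is only guaranteed to be $K'$-finite, not $K$-finite, so it need not lie in $V=\overline{V}_{K\text{-fin}}$. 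The repair is to take adjoints first: realize $\Hilbert{M}_\pi$ inside the bounded $G'_\RR$-intertwiners $\overline{V}\to \Hilbert{H}_\pi$, restrict to $V$, and observe that since every vector of $V$ is $K$-finite (hence $K'$-finite) the image lands in $W_\pi$. Injectivity then follows from density of $V$ in $\overline{V}$, and one obtains an injection $\Hilbert{M}_\pi\hookrightarrow \Hom_{\lie{g'}, K'}(V|_{\lie{g'},K'}, W_\pi)$ as required.

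Your outline for the leftmost inequality is along correct lines, with the measurable-field issues correctly flagged; the details are carried out in \cite{Ki20} rather than in the present paper.
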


\begin{remark}\label{rmk:UnitarySmooth}
	If $V$ is not unitarizable, the inequality holds without the term $\esssup\set{\dim(\Hilbert{M}_\pi):\pi \in \widehat{G'_\RR}}$.
	Although a similar inequality holds for smooth admissible representations of moderate growth, we omit it in this paper for simplicity.
\end{remark}

By Theorem \ref{thm:UniformlyBoundedPIdegInequality}, the boundedness of multiplicities does not depend on the choice of the categories (i.e.\ $(\lie{g}, K)$-modules, smooth representations or unitary representations).
Hereafter, we concentrate on $(\lie{g}, K)$-modules and $\PIdeg$.

\subsection{Lower semicontinuity of \texorpdfstring{$\PIdeg$}{PIdeg}}

In this subsection, we show the lower semicontinuity of $\PIdeg$.
To show the result, we need to bound the constant $C$ in Proposition \ref{prop:CharacterizationPIdeg}.

Write $W_{G'}$ for the Weyl group of $G'$.
For a $\lie{g'}$-module $V$ and a character $\chi$ of $\univcent{g'}$, we set
\begin{align*}
	V_{\chi} \coloneq \set{v \in V: \Ker(\chi)^k v = 0 \text{ for some }k\in \NN}.
\end{align*}
$V_{\chi}$ is called the $\chi$ primary component of $V$.
If $V$ is the direct sum of all primary components, we say that $V$ has the primary decomposition.
The following lemma is proved in \cite[Theorem 7.133 and its proof]{KnVo95_cohomological_induction}.

\begin{lemma}\label{lem:GeneralizedInfinitesimalCharHighestWeightModule}
	Let $V$ be a $\lie{g'}$-module with an infinitesimal character, and $F$ a finite-dimensional $G'$-module.
	Then $V\otimes F$ has the primary decomposition and $\Ker(\chi)^{|W_{G'}|}$ annihilates $(V\otimes F)_{\chi}$ for any character $\chi$ of $\univcent{g'}$.
\end{lemma}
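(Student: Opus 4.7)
The plan is to adapt the filtration argument from Knapp--Vogan \cite[Theorem 7.133]{KnVo95_cohomological_induction}. Enumerate the weights $\mu_1, \dots, \mu_n$ of $F$ with respect to $\lie{t'}$ (with multiplicity, so $n = \dim F$) compatibly with a $\lie{b'}$-stable filtration
\begin{align*}
0 = F_0 \subset F_1 \subset \dots \subset F_n = F
\end{align*}
whose successive quotients $F_i / F_{i-1}$ are one-dimensional $\lie{b'}$-modules on which $\lie{t'}$ acts by $\mu_i$ and $\lie{u'}$ acts trivially. Under the diagonal action of $\univ{g'}$, this gives a filtration $\{V \otimes F_i\}$ of $V \otimes F$ by $\lie{b'}$-submodules.

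The heart of the argument is the key lemma that, for every $z \in \univcent{g'}$, the operator $z - \chi_{\lambda + \mu_i}(z)$ sends $V \otimes F_i$ into $V \otimes F_{i-1}$. This is the main obstacle, since $\univcent{g'}$ is not contained in $\univ{\lie{b'}}$, so the filtration $\{V \otimes F_i\}$ is not a priori $\univcent{g'}$-stable. The standard resolution uses the Harish--Chandra decomposition: modulo $\lie{u'^-}\univ{g'} + \univ{g'}\lie{u'}$, any $z \in \univcent{g'}$ equals its Harish--Chandra image $\gamma(z) \in \univ{\lie{t'}}$ (with the appropriate $\rho$-shift). On the subquotient $V \otimes (F_i/F_{i-1}) \simeq V \otimes \CC_{\mu_i}$, contributions from $\univ{g'}\lie{u'}$ vanish because $\lie{u'}$ acts trivially on $F_i/F_{i-1}$; the $\gamma(z)$ term acts on $V$ with the $\lie{t'}$-weight shifted by $\mu_i$, yielding the scalar $\chi_{\lambda + \mu_i}(z)$ by the infinitesimal-character property of $V$; and the terms from $\lie{u'^-}\univ{g'}$ are absorbed via a careful PBW reorganization or land in the strictly smaller submodule $V \otimes F_{i-1}$.

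Granting this lemma, the polynomial $P_z(X) := \prod_{i=1}^n (X - \chi_{\lambda + \mu_i}(z))$ annihilates the action of $z$ on $V \otimes F$. Since $\univcent{g'}$ is finitely generated, $V \otimes F$ carries only finitely many generalized infinitesimal characters, each of the form $\chi_{\lambda + \mu}$ with $\mu \in \Delta(F, \lie{t'})\cup \{0\}$, and the primary decomposition follows.

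For the bound $\Ker(\chi)^{|W_{G'}|}$: writing $\chi = \chi_\nu$, the filtration steps $V \otimes F_i / V \otimes F_{i-1}$ that contribute to the $\chi$-primary component are precisely those with $\lambda + \mu_i \in W_{G'}\nu$, and the distinct $\mu_i$ satisfying this lie in the finite set $W_{G'}\nu - \lambda$ of cardinality at most $|W_{G'}|$. Reordering so that layers with a common $\lie{t'}$-weight appear consecutively and grouping them into single $\lie{b'}$-module segments, one reduces the effective length of the filtration on $(V \otimes F)_\chi$ to the number of distinct elements of $W_{G'}\nu - \lambda$; since every element of $\Ker(\chi)$ drops the effective degree by at least one step on the primary component, any product of $|W_{G'}|$ such elements annihilates $(V \otimes F)_\chi$.
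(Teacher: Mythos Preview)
The paper itself does not prove this lemma; it refers to \cite[Theorem 7.133 and its proof]{KnVo95_cohomological_induction}. Your sketch follows the same overall shape as that argument, but the key lemma as you have stated it is false. Take $\lie{g'}=\lieSL_2$, $F$ the standard $2$-dimensional representation with weight vectors $f_{\pm}$ of weight $\pm 1$, $z=C$ the Casimir, and $F_1=\CC f_+$. Expanding the comultiplication directly gives
\begin{align*}
\bigl(C-\chi_{\lambda+1}(C)\bigr)(v\otimes f_+)=(h-\lambda)\,v\otimes f_+ \;+\; 2\,ev\otimes f_-,
\end{align*}
which for generic $v\in V$ lies neither in $V\otimes F_0=0$ nor even in $V\otimes F_1$. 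Thus the diagonal $\univcent{g'}$-action does not preserve the $\lie{b'}$-filtration $\{V\otimes F_i\}$ at all. The error in your justification is that the Harish--Chandra decomposition $z=\gamma(z)+r$ controls the action of $z$ on a module through a highest-weight vector, not its \emph{diagonal} action on $V\otimes F$: when you expand $\Delta(uX)$ for $u\in\univ{g'}$ and $X\in\lie{u'}$, the contribution $\Delta(u)\cdot(Xv\otimes f)$ need not remain in $V\otimes F_i$ since $u\notin\univ{b'}$; and the $\gamma(z)$ piece acts on the subquotient by the $\mu_i$-shift of $\gamma(z)\in\univ{t'}$ applied to $V$, which is not a scalar because $V$ carries an infinitesimal character only for $\univcent{g'}$, not for $\univ{t'}$.

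What survives, and what Knapp--Vogan actually establish, is the weaker statement that $\prod_i\bigl(z-\chi_{\lambda+\mu_i}(z)\bigr)$ annihilates $V\otimes F$, with no claim that individual factors respect a $\lie{b'}$-filtration. On $M(\lambda)\otimes F$ there is a genuine $\lie{g'}$-stable filtration with Verma subquotients $M(\lambda+\mu_i)$, so the product vanishes there; one then checks that the resulting identity in $\univ{g'}\otimes\End(F)$ uses only the relation $\Ker(\chi_\lambda)\cdot V=0$ and hence transfers to arbitrary $V$ with that infinitesimal character. The $|W_{G'}|$ bound likewise comes from this Verma-module picture (grouping subquotients with equal $\mu_i$ is legitimate there, since an extension of modules with the same infinitesimal character again has that infinitesimal character), not from the $\lie{b'}$-filtration of $F$ you describe.
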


We consider the algebra $(\univ{g}/I)^{G'}$ for a primitive ideal $I \subset \univ{g}$.
Recall that $\Jacobson((\univ{g}/I)^{G'})$ is the Jacobson radical.
As we have seen before Proposition \ref{prop:CharacterizationPIdeg}, there exists a constant $C \in \NN$ such that $\Jacobson((\univ{g}/I)^{G'})^C = 0$.
We shall show that the constant $C$ can be independent of $I$.

\begin{theorem}\label{thm:UniformNilpotency}
	There exists a constant $C \in \NN$ such that, for any primitive ideal $I$ of $\univ{g}$, $\Jacobson((\univ{g}/I)^{G'})^C = 0$.
\end{theorem}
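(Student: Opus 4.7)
The plan is to deduce the uniform nilpotency directly from Theorem \ref{thm:FiniteLength} via Duflo's realization of primitive ideals. By Fact \ref{fact:Duflo}, any primitive ideal $I \subset \univ{g}$ has the form $I = \Ann_{\univ{g}}(L(\lambda))$ for some $\lambda \in \lie{t}^*$, so $\univ{g}/I$ acts faithfully on $L(\lambda)$ and consequently $\alg{A} \coloneq (\univ{g}/I)^{G'}$ embeds into $\End_{\CC}(L(\lambda))$. It therefore suffices to show that $\Jacobson(\alg{A})^C$ acts as zero on $L(\lambda)$ for a constant $C$ independent of $\lambda$.

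Since $\lie{b'} \subset \lie{b}$, the restriction $L(\lambda)|_{\lie{g'}}$ is discretely decomposable into irreducible $\lie{g'}$-modules (as recalled in Subsection \ref{subsection:UniformlyBounded}). Each irreducible constituent admits a $\univcent{g'}$-infinitesimal character, yielding the $\univcent{g'}$-primary decomposition $L(\lambda) = \bigoplus_{\chi} L(\lambda)_{\chi}$. In the discretely decomposable setting $\Ker(\chi)$ acts bijectively on irreducible constituents with a different infinitesimal character and by zero on those with character $\chi$, so $L(\lambda)_\chi = L(\lambda)/\Ker(\chi) L(\lambda)$. Theorem \ref{thm:FiniteLength} applied with $k = 1$ then gives a uniform bound: each $L(\lambda)_\chi$ has length at most $C$ as a $(\univ{g'} \otimes \alg{A})$-module, with $C$ independent of $\lambda$ and $\chi$.

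Next, decompose $L(\lambda)_\chi = \bigoplus_{[V']} V' \otimes M_{V'}$ into $\lie{g'}$-isotypic components. Because $\alg{A}$ commutes with $\lie{g'}$, it preserves each component, and Schur's lemma applied to the countable-dimensional simple $\lie{g'}$-module $V'$ over $\CC$ shows that the $\alg{A}$-action on $V' \otimes M_{V'}$ has the form $\id \otimes \rho_{V'}$ for some $\rho_{V'} \colon \alg{A} \to \End_\CC(M_{V'})$. Similarly every simple $\alg{A}$-module $M$ satisfies $\End_{\alg{A}}(M) = \CC$, so $V' \otimes M$ is simple as a $(\univ{g'} \otimes \alg{A})$-module; hence the $(\univ{g'} \otimes \alg{A})$-length of $V' \otimes M_{V'}$ equals the $\alg{A}$-length of $M_{V'}$. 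Summing over isotypic types gives $\sum_{[V']} \operatorname{length}_{\alg{A}}(M_{V'}) \leq C$, so every $M_{V'}$ has $\alg{A}$-length at most $C$.

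Finally, the Jacobson radical annihilates every simple $\alg{A}$-module, so for any $\alg{A}$-module $M$ of length at most $C$ one has $\Jacobson(\alg{A})^C \cdot M = 0$ by induction along a composition series. Applying this to each $M_{V'}$ and reassembling $L(\lambda) = \bigoplus_{\chi, [V']} V' \otimes M_{V'}$, we conclude that $\Jacobson(\alg{A})^C$ acts as zero on $L(\lambda)$, whence $\Jacobson(\alg{A})^C = 0$ by faithfulness. The chief point of care is the identification $L(\lambda)_\chi = L(\lambda)/\Ker(\chi) L(\lambda)$: it crucially uses discrete decomposability and is precisely what converts the uniform length bound of Theorem \ref{thm:FiniteLength} into a uniform nilpotency bound on $\Jacobson(\alg{A})$; the rest is a standard Schur-lemma bookkeeping combined with the elementary length-versus-radical inequality.
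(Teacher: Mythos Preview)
Your overall strategy coincides with the paper's: realize $I$ via Duflo as $\Ann_{\univ{g}}(L(\lambda))$, show $\Jacobson(\alg A)^C$ kills each $\univcent{g'}$-primary component $L(\lambda)_\chi$ by bounding its $(\univ{g'}\otimes\alg A)$-length via Theorem~\ref{thm:FiniteLength}, and use that $\Jacobson(\alg A)$ annihilates every simple subquotient. The genuine gap is your assertion that $L(\lambda)|_{\lie{g'}}$ is a direct sum of irreducible $\lie{g'}$-modules. The statement recalled in Subsection~\ref{subsection:UniformlyBounded} is only \emph{discrete decomposability} in Kobayashi's sense (an exhaustion by finite-length $\lie{g'}$-submodules), not complete reducibility; the Remark following the paper's proof of this theorem in fact singles out complete reducibility of $L|_{\lie{g'}}$ as an additional hypothesis, under which one gets the much stronger conclusion $\Jacobson(\alg A)=0$. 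Without semisimplicity, elements of $\Ker(\chi)$ act nilpotently but not necessarily by zero on $L(\lambda)_\chi$, so your identification $L(\lambda)_\chi = L(\lambda)/\Ker(\chi)L(\lambda)$ fails (one only has $L(\lambda)/\Ker(\chi)L(\lambda)\simeq L(\lambda)_\chi/\Ker(\chi)L(\lambda)_\chi$), and the isotypic decomposition $L(\lambda)_\chi=\bigoplus_{[V']}V'\otimes M_{V'}$ is unavailable.

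The paper repairs exactly these two points. First, instead of $k=1$ it produces a \emph{uniform} $k=|W_{G'}|$ with $\Ker(\chi)^{|W_{G'}|}L(\lambda)_\chi=0$: one picks an irreducible $\lie{g'}$-submodule $L_0\subset\univ{g'}v_0$, writes $L(\lambda)=\univ{g}L_0$, and applies Lemma~\ref{lem:GeneralizedInfinitesimalCharHighestWeightModule} using that $\univ{g}$ is a direct sum of finite-dimensional $G'$-modules; then Theorem~\ref{thm:FiniteLength} is invoked with $k=|W_{G'}|$. Second, rather than an isotypic decomposition, the paper argues directly that every irreducible $(\univ{g'}\otimes\alg A)$-subquotient $W$ of $L(\lambda)_\chi$ contains a $\lie{b'}$-eigenvector (weights are bounded above), hence is $\lie{g'}$-isotypic of highest-weight type and of the form $L'\boxtimes L''$; thus $\Jacobson(\alg A)$ kills $W$, and $\Jacobson(\alg A)^C$ kills $L(\lambda)_\chi$ by the length bound. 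Your proof becomes correct once you replace the semisimplicity claim by these two ingredients.
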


\begin{proof}
	Take a Borel subgroup $B$ of $G$ such that $B'\coloneq B\cap G'$ is a Borel subgroup of $G'$.
	Fix a Levi decomposition $B=TU$ such that $B'=(T\cap G')(U\cap G')$ is a Levi decomposition.
	Set $T'\coloneq T\cap G'$ and $U'\coloneq U\cap G'$.

	Let $I$ be a primitive ideal in $\univ{g}$.
	By Fact \ref{fact:Duflo}, there exists an irreducible highest weight $\lie{g}$-module $L$ with respect to $\lie{b}$ such that $\Ann_{\univ{g}}(L) = I$.
	Then $L$ is a faithful module of $(\univ{g}/I)^{G'}$.

	Take a highest weight vector $v_0 \in L$.
	Then $\univ{g'}v_0$ is a highest weight $\lie{g'}$-module and hence has finite length.
	Take an irreducible submodule $L_0$ of $\univ{g'}v_0$.
	Then we have $L = \univ{g}L_0$.
	Since $\univ{g}$ is completely reducible as a $G'$-module, by Lemma \ref{lem:GeneralizedInfinitesimalCharHighestWeightModule}, $L|_{\lie{g'}}$ has the primary decomposition and $\Ker(\chi)^{|W_{G'}|}$ annihilates $L_\chi$ for any character $\chi$ of $\univcent{g'}$.
	In other words, we have $L_{\chi} \simeq L/\Ker(\chi)^{|W_{G'}|} L$.
	
	Take a constant $C$ in Theorem \ref{thm:FiniteLength} for $k = |W_{G'}|$.
	Let $\chi$ be a character of $\univcent{g'}$ such that $L_{\chi}\neq 0$.
	To show the assertion, it is enough to see that $\Jacobson((\univ{g}/I)^{G'})^{C}$ annihilates $L_{\chi}$.
	Let $W$ be an irreducible subquotient of the $(\univ{g'}\otimes \univ{g}^{G'})$-module $L_{\chi}$.
	Then $W$ has a $\lie{b}'$-eigenvector and hence $W|_{\lie{g'}}$ is a sum of some copies of an irreducible highest weight module $L'$.
	This implies that there exists an irreducible $\univ{g}^{G'}$-module $L''$ such that $W \simeq L' \boxtimes L''$.
	Since $\Jacobson((\univ{g}/I)^{G'})$ annihilates $W \simeq L'\boxtimes L''$, we have $\Jacobson((\univ{g}/I)^{G'})^{C}L_{\chi} = 0$.
	We have shown the theorem.
\end{proof}

\begin{remark}
	It is easy to see from the proof that if $L|_{\lie{g'}}$ is completely reducible, then $(\univ{g}/I)^{G'}$ is semiprimitive and hence $\Jacobson((\univ{g}/I)^{G'}) = 0$.
	If there exists an irreducible unitary representation $V$ of $G_\RR$ such that $I = \Ann_{\univ{g}}(V^\infty)$ and $\PIdeg((\univ{g}/I)^{G'}) < \infty$, then $(\univ{g}/I)^{G'}$ is semiprimitive.
	This is proved by using the direct integral decomposition \eqref{eqn:DirectIntegral}.
	See \cite[Corollary 10.25]{Ki20} for the details.
\end{remark}

We shall show the lower semicontinuity of $\PIdeg$.

\begin{theorem}\label{thm:LowerSemicontinuityPIdeg}
	The map $\Prim(\univ{g}) \ni I \mapsto \PIdeg((\univ{g}/I)^{G'}) \in \NN \cup \set{\infty}$
	is lower semicontinuous.
\end{theorem}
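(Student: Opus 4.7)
The plan is to show that for each $n \in \NN$, the sublevel set $\set{I \in \Prim(\univ{g}) : \PIdeg((\univ{g}/I)^{G'}) \leq n}$ is closed in the Jacobson topology, which is equivalent to lower semicontinuity. The crucial ingredient is Theorem \ref{thm:UniformNilpotency}, which provides a constant $C \in \NN$, independent of $I$, such that $\Jacobson((\univ{g}/I)^{G'})^C = 0$ for every primitive ideal $I \subset \univ{g}$. With this uniform $C$ in hand, Proposition \ref{prop:CharacterizationPIdeg} gives that $\PIdeg((\univ{g}/I)^{G'}) \leq n$ if and only if $(s_{2n}\cdot X_{2n+1})^C \in \PI((\univ{g}/I)^{G'})$ (the converse direction of that proposition holds for any $C \geq 1$, and the forward direction uses exactly the uniform bound).

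Next, I would lift the polynomial identity condition from $(\univ{g}/I)^{G'}$ to $\univ{g}^{G'}$. Any two-sided ideal $I \subset \univ{g}$ is stable under the adjoint action of the connected reductive group $G'$ (since $\ad(X)$ is a derivation of $\univ{g}$ for every $X \in \lie{g}'$, so $\ad(\lie{g}')I \subset I$, and $G'$ is connected), making $\univ{g}/I$ a rational $G'$-module. Since $G'$ is reductive, taking $G'$-invariants is exact, so the natural map $\univ{g}^{G'} \twoheadrightarrow (\univ{g}/I)^{G'}$ is surjective. Therefore the polynomial identity condition above is equivalent to
\begin{align*}
	(s_{2n}(Y_1, \ldots, Y_{2n})\cdot Y_{2n+1})^C \in I \quad \text{for all } Y_1, \ldots, Y_{2n+1} \in \univ{g}^{G'}.
\end{align*}

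For each fixed tuple $(Y_1, \ldots, Y_{2n+1})$, the set $\set{I \in \Prim(\univ{g}) : (s_{2n}(Y_1, \ldots, Y_{2n}) \cdot Y_{2n+1})^C \in I}$ is closed in the Jacobson topology, as it is by definition the closed subset $V((X))$ associated to the two-sided ideal generated by this single element $X$. The sublevel set in question is the intersection of these closed sets over all choices of $Y_1, \ldots, Y_{2n+1} \in \univ{g}^{G'}$, hence itself closed, which proves the theorem. The hard part of the argument has already been done in Theorem \ref{thm:UniformNilpotency}: without a uniform bound $C$ on nilpotency indices, the characterization via polynomial identities would involve an $I$-dependent exponent and the intersection argument would collapse; everything else is a straightforward application of the Jacobson topology's definition together with reductivity of $G'$.
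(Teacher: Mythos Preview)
Your proof is correct and follows essentially the same approach as the paper: both use Theorem~\ref{thm:UniformNilpotency} to obtain a uniform $C$, then invoke Proposition~\ref{prop:CharacterizationPIdeg} to convert the bound $\PIdeg((\univ{g}/I)^{G'}) \leq n$ into the single polynomial identity $(s_{2n}\cdot X_{2n+1})^C$, and finally use complete reducibility of the $G'$-action to pass from $(\univ{g}/I)^{G'}$ to $\univ{g}^{G'}$. The only organizational difference is that the paper packages the intersection into a single ideal $I_0 \coloneq \bigcap_{I \in X} I$ and argues via the embedding $(\univ{g}/I_0)^{G'} \hookrightarrow \prod_{I \in X} (\univ{g}/I)^{G'}$, whereas you write the sublevel set directly as an intersection of Jacobson-closed sets indexed by tuples $(Y_1,\ldots,Y_{2n+1})$; these are two phrasings of the same idea.
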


\begin{proof}
	Let $n \in \NN$ and set $X \coloneq \set{I \in \Prim(\univ{g}): \PIdeg((\univ{g}/I)^{G'}) \leq n}$.
	We shall show that $X$ is closed.
	Put $I_0 \coloneq \bigcap_{I \in X} I$.
	Then the natural homomorphism
	\begin{align*}
		(\univ{g}/I_0)^{G'} \rightarrow \prod_{I \in X} (\univ{g}/I)^{G'}
	\end{align*}
	is injective.
	This implies
	\begin{align*}
		\PI((\univ{g}/I_0)^{G'}) \supset \bigcap_{I \in X} \PI((\univ{g}/I)^{G'}).
	\end{align*}
	Take a constant $C$ as in Theorem \ref{thm:UniformNilpotency}.
	By Proposition \ref{prop:CharacterizationPIdeg} and Theorem \ref{thm:UniformNilpotency},
	we have $(s_{2n}\cdot X_{2n+1})^C \in \PI((\univ{g}/I_0)^{G'})$.

	Let $I'$ be a primitive ideal of $\univ{g}$ containing $I_0$.
	Then $(\univ{g}/I')^{G'}$ is a quotient algebra of $(\univ{g}/I_0)^{G'}$ since the $G'$-action on $\univ{g}$ is completely reducible.
	Hence we have $(s_{2n}\cdot X_{2n+1})^C \in \PI((\univ{g}/I)^{G'})$.
	By Proposition \ref{prop:CharacterizationPIdeg}, we obtain $\PIdeg((\univ{g}/I)^{G'})\leq n$.
	
	Therefore we have $X = \set{I \in \Prim(\univ{g}): I_0\subset I}$.
	This implies that $X$ is closed.
\end{proof}

\subsection{Spherical v.s.\ uniformly bounded multiplicities}\label{subsection:SphericalVS}

As an application of Theorem \ref{thm:LowerSemicontinuityPIdeg}, we shall show that the uniform boundedness of multiplicities implies the sphericity of a flag variety.

Let $B$ be a Borel subgroup of $G$ with Levi decomposition $B = TU$,
and $P$ a standard parabolic subgroup of $G$ with Levi decomposition $P = LU_P$.
Then $B_L \coloneq L\cap B$ is a Borel subgroup of $L$.
Set $U_L \coloneq U\cap L$.
Write $\rho$ for half the sum of all roots in $\Delta(\lie{u}, \lie{t})$,
and $W_G$ for the Weyl group of $G$.

The following lemma asserts that one generalized Verma module knows the sphericity of $G/P$.
The result has been proved in \cite[Proposition 6.12 and Theorem 8.3]{Ki20}.

\begin{lemma}\label{lem:BoundedSphericalAlgebraic}
	Let $F$ be a non-zero $\lie{l}$-module.
	Set $I \coloneq \Ann_{\univ{g}}(\univ{g}\otimes_{\univ{p}}F)$, letting $\lie{u}_P$ act on $F$ trivially.
	If $\PIdeg((\univ{g}/I)^{G'}) < \infty$, then $G/P$ is $G'$-spherical.
	If $F$ is finite dimensional, the converse also holds.
\end{lemma}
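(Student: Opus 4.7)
The plan is to translate the finite PI-degree hypothesis on the noncommutative algebra $(\univ{g}/I)^{G'}$ into a Poisson commutativity statement on a concrete commutative algebra, and then apply a Knop--Vinberg type criterion for sphericity. First I would equip $(\univ{g}/I)^{G'}$ with the filtration induced from the PBW filtration on $\univ{g}$. Since $G'$ is reductive, taking $G'$-invariants is exact and commutes with $\gr$, so $\gr((\univ{g}/I)^{G'}) \hookrightarrow (S(\lie{g})/\gr(I))^{G'}$ as Poisson subalgebras (with the Kirillov--Kostant bracket on $S(\lie{g})$). For the generalized Verma module $M = \univ{g} \otimes_{\univ{p}} F$ with the good filtration induced from $\univ{g}$, PBW gives $\gr(M) \cong S(\lie{g}/\lie{p}) \otimes F$, whose support in $\lie{g}^*$ is the Zariski closure $\overline{G\cdot \lie{u}_P^{\perp}}$. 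Consequently $\sqrt{\gr(I)}$ is the defining ideal of $\overline{G\cdot \lie{u}_P^{\perp}}$, and $\gr((\univ{g}/I)^{G'})$ embeds modulo nilpotents into $\rring{\overline{G\cdot \lie{u}_P^{\perp}}}^{G'}$ as a Poisson subalgebra.

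The next step is to argue that $\PIdeg((\univ{g}/I)^{G'}) < \infty$ forces the Poisson bracket on $\gr((\univ{g}/I)^{G'})$ to vanish. Heuristically, a filtered noetherian algebra of finite PI degree is, after passing to suitable prime quotients via Posner's theorem, module-finite over its centre; at the associated graded level this translates to the generic symplectic leaves of $\gr$ collapsing to points, i.e.\ the induced Poisson bracket on $\gr$ being trivial. The variety $\overline{G\cdot \lie{u}_P^{\perp}}$ is precisely the image of the moment map $\mu\colon T^*(G/P)\to \lie{g}^*$, and since $\mu$ is $G'$-equivariant and Poisson, the generic finiteness of $\mu$ lets one transfer Poisson commutativity of the image of $\gr((\univ{g}/I)^{G'})$ to Poisson commutativity of $\rring{T^*(G/P)}^{G'}$. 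By Knop's theorem, this is equivalent to $G/P$ being $G'$-spherical, which is the desired conclusion. For the converse (with $F$ finite-dimensional), one runs the argument in reverse: sphericity gives Poisson commutativity of $\rring{T^*(G/P)}^{G'}$, which by a quantisation/deformation argument at the associated-graded level yields finite PI degree of $(\univ{g}/I)^{G'}$, and here finite-dimensionality of $F$ is what makes the relevant graded module finitely generated.

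The main obstacle is the middle step: making precise the implication ``finite PI degree of $(\univ{g}/I)^{G'}$ $\Rightarrow$ Poisson bracket vanishes on its associated graded'' when the algebra is not prime, its associated graded is not a domain, and its image in $\rring{\overline{G\cdot \lie{u}_P^{\perp}}}^{G'}$ might not separate $G'$-orbits. One must simultaneously control the ``size'' of this invariant subalgebra, so that its Poisson commutativity genuinely reflects Poisson commutativity of the geometric ring $\rring{T^*(G/P)}^{G'}$ rather than that of some proper Poisson subvariety. When $F$ is infinite-dimensional the reduction is especially delicate, since $\gr(M) = S(\lie{g}/\lie{p})\otimes F$ fails to be finitely generated over $S(\lie{g})$; the hope is either to reduce to the finite-dimensional case through a cofinal family of finite-dimensional $\lie{l}$-subquotients of $F$, or to work directly with the Poisson structure on the graded of the ring of invariants and bypass the usual Posner-type machinery.
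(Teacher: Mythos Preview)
The paper does not prove this lemma in-text; it simply cites the author's earlier work [Ki20, Proposition~6.12 and Theorem~8.3]. Judging from how the present paper reuses the same associated-graded identification $\gr((\univ{g}/I)^{G'})\simeq (S(\lie{g})/\gr I)^{G'}$ in the proof of Theorem~\ref{thm:AlmostIrr} and the same annihilator estimate in Corollary~\ref{cor:FiniteLenTransitive}, your strategy---pass to the graded Poisson algebra, descend to invariants on the Richardson orbit closure, and invoke a Knop--Losev coisotropy criterion---is almost certainly the route taken in that reference.

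Two concrete corrections. First, your claim that $\sqrt{\gr I}$ \emph{equals} the defining ideal of the Richardson closure is false for general $F$: already for $F=\univ{l}$ one gets $M\simeq\univ{g}/\univ{g}\lie{u}_P$, whose annihilator is zero, so $\Variety(\gr I)=\lie{g}^*$. What always holds (and what the paper itself proves in Corollary~\ref{cor:FiniteLenTransitive} from $I\subset \univ{g}\Ann_{\univ{p}}(F)$ together with $F\neq 0$) is only the containment $\Variety(\gr I)\supseteq \mu(T^*(G/P))$. Fortunately that is the direction you need: it gives a Poisson surjection $(S(\lie{g})/\gr I)^{G'}\twoheadrightarrow \rring{\overline{O_P}}^{G'}$, so Poisson commutativity passes downward. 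This also dissolves your worry about infinite-dimensional $F$; no finite generation of $\gr M$ is used, only $F\neq 0$. Second, your ``main obstacle'' is genuinely the substance of the cited Proposition~6.12 and is a theorem, not a formality: a nonvanishing Poisson bracket on the reduced $\gr$ lets one produce (after localization at a suitable regular element) a filtered subquotient behaving like a Weyl algebra, forcing infinite PI degree. Finally, you need not lift from $\rring{\overline{O_P}}^{G'}$ to $\rring{T^*(G/P)}^{G'}$ via generic finiteness of $\mu$; it is cleaner to stay on the orbit and argue that Poisson commutativity of $\rring{\overline{O_P}}^{G'}$ means the $G'$-action on $O_P$ is coisotropic (cf.\ the field-of-fractions step in the paper's proof of Lemma~\ref{lem:OrbitTransitive}), which is equivalent to $G'$-sphericity of $G/P$---this geometric equivalence is exactly what underlies Lemma~\ref{lem:SphericalAndLevi} and is the content of the cited Theorem~8.3.
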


\begin{lemma}\label{lem:SphericalAndLevi}
	Let $Q$ be a parabolic subgroup of $G$.
	Assume that a Levi subgroup of $Q$ is conjugate to $L$ by an inner automorphism.
	Then $G/Q$ is $G'$-spherical if and only if so is $G/P$.
\end{lemma}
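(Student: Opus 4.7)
First I would reduce to the case where $P$ and $Q$ share the same Levi subgroup. By hypothesis, an inner automorphism of $G$ carries the Levi of $Q$ onto $L$; since for any $g\in G$ the map $G/Q\to G/(gQg^{-1})$, $hQ\mapsto hg^{-1}\cdot(gQg^{-1})$, is a $G$-equivariant (hence $G'$-equivariant) isomorphism, sphericity of $G/Q$ is preserved under $G$-conjugation of $Q$. We may therefore assume that $Q$ has Levi $L$.

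The strategy is then to realize the sphericity of both $G/P$ and $G/Q$ through the \emph{same} primitive ideal, and invoke Lemma \ref{lem:BoundedSphericalAlgebraic}. Choose a common maximal torus $T\subset L$ and Borel subgroups $B\subset P$, $B'\subset Q$ with $B\cap L=B'\cap L$. Denote by $\rho_B,\rho_{B'}\in\lie{t}^*$ the half-sum of positive roots for $B,B'$. Since the Weyl group of $L$ permutes the $T$-weights of $\lie{u}_P$ and of $\lie{u}_Q$ separately, the difference $\rho_B-\rho_{B'}=\rho_{\lie{u}_P}-\rho_{\lie{u}_Q}$ is a character of $\lie{l}$. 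For a character $\lambda$ of $\lie{l}$, set $\mu\coloneq\lambda+\rho_B-\rho_{B'}$, again a character of $\lie{l}$. For $\lambda$ in a Zariski-dense subset of the characters of $\lie{l}$, both generalized Verma modules $\univ{g}\otimes_{\univ{p}}\CC_\lambda$ and $\univ{g}\otimes_{\univ{q}}\CC_\mu$ are irreducible by Jantzen's criterion, with common infinitesimal character $\lambda+\rho_B=\mu+\rho_{B'}$. The associated varieties of their annihilators both equal the Richardson orbit closure of $L$, which depends only on the conjugacy class of the Levi. At a regular infinitesimal character, the primitive ideal in a given two-sided cell is uniquely determined by these data, so the annihilators $I_P\coloneq\Ann_{\univ{g}}(\univ{g}\otimes_{\univ{p}}\CC_\lambda)$ and $I_Q\coloneq\Ann_{\univ{g}}(\univ{g}\otimes_{\univ{q}}\CC_\mu)$ coincide.

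Once $I_P=I_Q$, the subalgebras $(\univ{g}/I_P)^{G'}$ and $(\univ{g}/I_Q)^{G'}$ agree, their $\PIdeg$'s are equal, and Lemma \ref{lem:BoundedSphericalAlgebraic} applied in both directions yields the desired equivalence of sphericities. The main obstacle is the identification $I_P=I_Q$; it rests on the classical uniqueness (Borho--Jantzen, Joseph) of a primitive ideal at a regular infinitesimal character with prescribed associated variety. A softer alternative, more in the spirit of this paper, is to vary $\lambda$ in a one-parameter subfamily of characters of $\lie{l}$ to obtain continuous families $I_{P,\lambda},I_{Q,\mu(\lambda)}\in\Prim(\univ{g})$ and invoke the lower semicontinuity of $\PIdeg$ from Theorem \ref{thm:LowerSemicontinuityPIdeg} to transfer finiteness between the $P$-family and the $Q$-family via continuity on $\Prim(\univ{g})$.
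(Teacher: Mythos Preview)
Your strategy---reduce to a common Levi, show that the annihilators of suitable generalized Verma modules induced from characters of $\lie{l}$ via $P$ and via $Q$ coincide, and then apply Lemma~\ref{lem:BoundedSphericalAlgebraic} in both directions---is exactly the paper's. The difference lies in how the equality $I_P=I_Q$ is justified.

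Your justification has a genuine gap. The assertion that at a regular infinitesimal character a primitive ideal is determined by its associated variety (or two-sided cell) is false in general: already in type $A_{n-1}$, primitive ideals with a fixed regular integral central character biject with standard Young tableaux, while their associated varieties depend only on the shape, so many distinct primitive ideals share the same Richardson orbit closure. What you actually need is the theorem that the induced ideal $\Ann_{\univ{g}}(\univ{g}\otimes_{\univ{p}}F)$ depends only on $\Ann_{\univ{l}}(F)$ and not on the choice of parabolic $\lie{p}\supset\lie{l}$. The paper invokes precisely this, citing \cite[Corollar 15.27]{Ja83} for the specific one-dimensional modules $F=\CC_{-\rho_P}$ and $\CC_{-\rho_Q}$; no genericity or irreducibility is required, since Lemma~\ref{lem:BoundedSphericalAlgebraic} already gives both implications for any nonzero finite-dimensional $F$. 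Your ``softer alternative'' via Theorem~\ref{thm:LowerSemicontinuityPIdeg} is too vague to assess, and in any case transferring finiteness of $\PIdeg$ between the $P$-family and the $Q$-family would still require identifying their primitive ideals---which brings you back to the same missing ingredient.
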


\begin{proof}
	The assertion has been proved in several ways \cite[Theorem 1.3]{AvPe14}, \cite[2.2]{AiGo21}.
	The proof is based on the results in which $G/P$ is $G'$-spherical if and only if the corresponding Richardson orbit has a good properties (e.g.\ coisotropic actions, $\mathcal{O}$-sphericity).
	We can give a proof using Lemma \ref{lem:BoundedSphericalAlgebraic}.

	We may assume that $P$ and $Q$ have the same Levi subgroup $L$.
	Let $U_Q$ be the unipotent radical of $Q$.
	Write $\rho_P$ (resp.\ $\rho_Q$) for half the sum of roots in $\Delta(\lie{u}_P, \lie{t})$ (resp.\ $\Delta(\lie{u}_Q, \lie{t})$).
	By \cite[Corollar 15.27]{Ja83}, we have $\Ann_{\univ{g}}(\univ{g}\otimes_{\univ{p}}\CC_{-\rho_P}) = \Ann_{\univ{g}}(\univ{g}\otimes_{\univ{q}}\CC_{-\rho_Q})$.
	This and Lemma \ref{lem:BoundedSphericalAlgebraic} show the assertion.
\end{proof}

One generalized Verma module has enough information about the sphericity of $G/P$.
In contrast to this, one of induced representations on $G/P$ (e.g.\ $\rring{G/P}$) has less information about $G/P$.
To recover structure on $G/P$, we need to consider a family of $\lie{g}$-modules.
Let $\lie{a}_L$ be the center of $\lie{l}$.
Then we have $\lie{a}_L\subset \lie{t}$.

\begin{lemma}\label{lem:GeneralForm}
	Let $S\subset \Prim(\univ{g})$.
	For each $I \in S$, take a representative $\lambda_I \in \lie{t}^*$ of the infinitesimal character of $I$.
	Suppose that $\overline{\set{\lambda_I: I \in S}}$ contains $\lie{a}_L^* + \mu_0$ for some $\mu_0 \in \lie{t}^*$.
	If $\sup_{I \in S}\PIdeg((\univ{g}/I)^{G'}) < \infty$, then $G/P$ is $G'$-spherical.
\end{lemma}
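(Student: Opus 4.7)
Write $N := \sup_{I\in S}\PIdeg((\univ{g}/I)^{G'}) < \infty$ and define $g \colon \lie{t}^* \to \NN \cup \set{\infty}$ by $g(\lambda) := \PIdeg((\univ{g}/\Ann_{\univ{g}}(L(\lambda)))^{G'})$. My plan is to transport the bounded-$\PIdeg$ hypothesis from $\Prim(\univ{g})$ to $\lie{t}^*$ via Fact \ref{fact:Duflo}, use the lower semicontinuity of $g$ to extend the bound to a full affine translate of a $W_G$-twist of $\lie{a}_L^*$, and then apply Lemma \ref{lem:BoundedSphericalAlgebraic} to a single Zariski-generic generalized Verma module for a conjugate parabolic $P'$, descending to $P$ via Lemma \ref{lem:SphericalAndLevi}.

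Composing Proposition \ref{prop:Continuity} with Theorem \ref{thm:LowerSemicontinuityPIdeg} makes $g$ lower semicontinuous in the Zariski topology, so $X' := g^{-1}(\set{0,\dots,N})$ is Zariski-closed. For each $I \in S$, Fact \ref{fact:Duflo} provides $\mu_I \in \lie{t}^*$ with $\Ann_{\univ{g}}(L(\mu_I)) = I$, so $\mu_I \in X'$; matching infinitesimal characters yields $w_I \in W_G$ with $\mu_I = w_I\lambda_I - \rho$. The Weyl-group ambiguity in the Duflo lift is resolved by pigeonhole: since $\lie{a}_L^* + \mu_0$ is Zariski-irreducible and is covered by the finitely many closed sets $\overline{\set{\lambda_I : w_I = w}}$ for $w \in W_G$, some $w_0 \in W_G$ satisfies $\overline{\set{\lambda_I : w_I = w_0}} \supset \lie{a}_L^* + \mu_0$. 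Transporting via $w_0$ and $-\rho$,
\begin{align*}
	\lie{a}_{L'}^* + \mu_0' \;\subset\; X', \qquad \mu_0' := w_0\mu_0 - \rho,
\end{align*}
where $L' := \tilde w_0 L \tilde w_0^{-1}$ for a representative $\tilde w_0 \in N_G(T)$ of $w_0$. Setting $P' := \tilde w_0 P \tilde w_0^{-1}$, Lemma \ref{lem:SphericalAndLevi} reduces the goal to the sphericity of $G/P'$.

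To finish, I would pick $\lambda^* = \lambda_a + \mu_0'$ with $\lambda_a \in \lie{a}_{L'}^*$ sufficiently generic so that the generalized Verma module $M := \univ{g}\otimes_{\univ{p'}}F$, with $F$ the irreducible highest weight $\lie{l'}$-module of highest weight $\lambda^*$ relative to the Borel $B' \cap L'$ of $L'$ (where $B' := \tilde w_0 B \tilde w_0^{-1}$), is irreducible and simultaneously $\lambda^* + \rho$ is regular antidominant with respect to both $B$ and $B'$. The structural identity $\Delta(\lie{u}_{P'}, \lie{t}) \cap \Delta(\lie{l'}, \lie{t}) = w_0(\Delta(\lie{u}_P, \lie{t}) \cap \Delta(\lie{l}, \lie{t})) = \emptyset$ ensures every $\alpha \in \Delta(\lie{u}_{P'}, \lie{t})$ is non-zero on $\lie{a}_{L'}$, so the irreducibility condition excludes only a proper subvariety of $\lie{a}_{L'}^*$. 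Under regular antidominance on both sides, $L(\lambda^*)$ and $M$ both reduce to Verma modules whose annihilators depend only on the infinitesimal character $\chi_{\lambda^*+\rho}$, so $\Ann_{\univ{g}}(M) = \Ann_{\univ{g}}(L(\lambda^*))$; combined with $\lambda^* \in X'$, this yields $\PIdeg((\univ{g}/\Ann_{\univ{g}}(M))^{G'}) \leq N < \infty$, and Lemma \ref{lem:BoundedSphericalAlgebraic} applied to $F$ and the parabolic $P'$ produces the sphericity of $G/P'$.

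The main obstacle I anticipate is the final compatibility step: matching $\Ann_{\univ{g}}(L(\lambda^*))$ (which carries the bounded $\PIdeg$) with $\Ann_{\univ{g}}(M)$ (which Lemma \ref{lem:BoundedSphericalAlgebraic} requires) relies on the regular antidominance locus for both Borels $B$ and $B'$ being Zariski-dense in $\lie{a}_{L'}^* + \mu_0'$. For roots $\alpha \in \Delta(\lie{l'}, \lie{t}) \cap (\Delta^+(B) \cup \Delta^+(B'))$ the pairings $\langle \mu_0' + \rho, \alpha^\vee \rangle$ are independent of $\lambda_a$, so the corresponding antidominance constraints must be controlled by the structural data $\mu_0$ and $w_0$; the aforementioned emptiness identity for $\Delta(\lie{u}_{P'}) \cap \Delta(\lie{l'})$ is precisely what rules out the most serious obstructions and makes the density argument work.
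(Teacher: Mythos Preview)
Your argument through the pigeonhole step is correct and matches the paper's: you arrive at $w_0(\lie{a}_L^*+\mu_0)-\rho\subset X'$. The gap lies in the final step, caused by working with the conjugate parabolic $P'=\tilde w_0 P\tilde w_0^{-1}$ (which contains $B'=\tilde w_0 B\tilde w_0^{-1}$) rather than the \emph{standard} parabolic $P_0\supset B$ with the same Levi $L'=Z_G(w_0(\lie{a}_L))$.

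Concretely, your module $M=\univ{g}\otimes_{\univ{p'}}F$, with $F$ of $B'$-highest weight $\lambda^*$, has infinitesimal character $[\lambda^*+\rho_{B'}]=[\lambda^*+w_0\rho]$, whereas $L(\lambda^*)$ has infinitesimal character $[\lambda^*+\rho]$. For generic $\lambda^*$ these lie in different $W_G$-orbits, so $\Ann_{\univ{g}}(L(\lambda^*))$ and $\Ann_{\univ{g}}(M)$ already have distinct central characters and cannot coincide, regardless of any antidominance hypothesis. Independently, your antidominance-density claim is unjustified: for any root $\alpha\in\Delta(\lie{l'},\lie{t})\cap\Delta(\lie{u},\lie{t})$ one has $\langle\lambda^*+\rho,\alpha^\vee\rangle=\langle\mu_0'+\rho,\alpha^\vee\rangle$ independently of $\lambda_a$, so if this fixed value happens to be a positive integer then no choice of $\lambda_a$ yields $B$-antidominance. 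The emptiness of $\Delta(\lie{u}_{P'},\lie{t})\cap\Delta(\lie{l'},\lie{t})$ does not help here, since the obstructing roots lie in $\lie{l'}$.

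The paper sidesteps both issues by using $P_0=L'U\supset B$ instead of $P'$. Then the generalized Verma module $\univ{g}\otimes_{\univ{p_0}}(F\otimes\CC_\mu)$, with $F$ an irreducible highest weight $\lie{l'}$-module for $B\cap L'$, is automatically a quotient of the $B$-Verma module with the same highest weight; hence whenever it is irreducible (a Zariski-generic condition on $\mu\in\lie{a}_{L'}^*$) it \emph{equals} $L(\lambda)$ for the corresponding $\lambda\in\lie{a}_{L'}^*+\mu_0'\subset X'$. No antidominance and no second Borel are needed, and the annihilator match is tautological. Lemma~\ref{lem:BoundedSphericalAlgebraic} then gives the $G'$-sphericity of $G/P_0$, and Lemma~\ref{lem:SphericalAndLevi} transfers it to $G/P$ since $P_0$ and $P$ have conjugate Levi subgroups.
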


\begin{proof}
	Assume that $\sup_{I \in S}\PIdeg((\univ{g}/I)^{G'}) < \infty$.
	For each $I \in S$, take an irreducible highest weight module $L(\lambda'_I)$ such that $\Ann_{\univ{g}}(L(\lambda'_I)) = I$ using Fact \ref{fact:Duflo}.
	Then we have $\lambda'_I = w_I(\lambda_I) - \rho$ for some $w_I \in W_G$.
	Since $W_G$ is a finite set, replacing $S$ with its subset, we may assume that
	\begin{align*}
		w\coloneq w_I = w_{I'} \quad (\forall I, I' \in S),\\
		\overline{\set{w(\lambda_I) - \rho : I \in S}} = w(\lie{a}_L^* + \mu_0) - \rho.
	\end{align*}
	By assumption, Proposition \ref{prop:Continuity} and Theorem \ref{thm:LowerSemicontinuityPIdeg}, we have
	\begin{align*}
		\PIdeg((\univ{g}/\Ann_{\univ{g}}(L(\lambda)))) < \infty
	\end{align*}
	for any $\lambda \in w(\lie{a}_L^* + \mu_0) - \rho$.

	Set $L_0\coloneq Z_G(w(\lie{a}_L))$ and $P_0\coloneq L_0U$.
	Then $P_0$ is a standard parabolic subgroup of $G$ and $L_0$ is a Levi subgroup of $G$ conjugate with $L$ under an inner automorphism.
	It is well known that for any irreducible highest weight $\lie{l}_0$-module $F$,
	the generalized Verma module $\univ{g}\otimes_{\univ{q_\mathrm{0}}}(F\otimes \CC_{\mu})$ is irreducible for generic $\mu \in w(\lie{a}^*_L)$.
	This implies that there exists $\lambda \in w(\lie{a}_L^* + \mu_0) - \rho$ such that $L(\lambda)$ is isomorphic to an irreducible generalized Verma module $\univ{g}\otimes_{\univ{q_\mathrm{0}}}(F\otimes \CC_{\mu})$.
	By Lemma \ref{lem:BoundedSphericalAlgebraic}, $G/P_0$ is $G'$-spherical since $\PIdeg((\univ{g}/\Ann_{\univ{g}}(L(\lambda)))) < \infty$.
	By Lemma \ref{lem:SphericalAndLevi}, this shows that $G/P$ is $G'$-spherical.
\end{proof}

We shall state the main result in this section.
Recall the notion of cohomologically induced modules.
Suppose that $P$ and $L$ are $\theta$-stable.
Write $K_L$ for the centralizer of the center of $\lie{l}$ in $K$.
For an $(\lie{l}, K_L)$-module $Z$, set
\begin{align*}
	\mathcal{R}^j(Z) \coloneq \Dzuck{K}{K_L}{j}(\Hom_{\lie{q}}(\univ{g}, Z)_{K_L}).
\end{align*}
Here $\Dzuck{K}{K_L}{j}$ is the $j$-th Zuckerman derived functor, i.e.\ $\Dzuck{K}{K_L}{0}(V) = V_K$ for a $(\lie{g}, K_L)$-module $V$.
Then $\mathcal{R}^j(Z)$ is a $(\lie{g}, K)$-module called a cohomologically induced module.

If $Z$ has finite length, so does $\mathcal{R}^j(Z)$.
If $Z$ has an infinitesimal character $[\lambda] \in \lie{t}^*/W_{G}$, then $\mathcal{R}^j(Z)$ has the infinitesimal character $[\lambda - \rho_{\lie{u}_P}]$, where $\rho_{\lie{u}_P}$ is half the sum of all roots in $\Delta(\lie{u}_P, \lie{t})$.
See \cite[Theorem 0.46 and Proposition 0.48]{KnVo95_cohomological_induction}.
We refer the reader to \cite{KnVo95_cohomological_induction} for more detailed structure of $\mathcal{R}^j(Z)$.
Note that our $\mathcal{R}^j(Z)$ is the unnormalized version written as ${}^{\mathrm{u}}\mathcal{R}^j(Z)$ in \cite[11.71]{KnVo95_cohomological_induction}.

The following theorem follows from Lemma \ref{lem:GeneralForm} immediately.

\begin{theorem}\label{thm:AqLambda}
	Let $Z$ be an $(\lie{l}, K_L)$-module of finite length, and $\set{\lambda_i}_{i \in I}$ a family of characters of $(\lie{l}, K_L)$.
	Fix $j \in \NN$.
	Suppose that $\mathcal{R}^j(Z\otimes \CC_{\lambda_i})$ is non-zero for any $i \in I$ and $\set{\lambda_i: i \in I}$ is Zariski dense in $\lie{a}^*_L$.
	If there exists a constant $C > 0$ such that, for any $i \in I$, all multiplicities in $\mathcal{R}^j(Z\otimes \CC_{\lambda_i})|_{\lie{g'}, K'}$ are bounded by $C$ from above, then $G/P$ is $G'$-spherical.
\end{theorem}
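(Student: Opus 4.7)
The plan is to reduce to Lemma \ref{lem:GeneralForm}. Setting aside technicalities, I will extract from the family $\{\mathcal{R}^j(Z \otimes \CC_{\lambda_i})\}_{i \in I}$ a family of primitive ideals $\{I_i\}$ in $\univ{g}$ such that (i) the infinitesimal character of each $I_i$ admits a representative of the form $\mu + \lambda_i - \rho_{\lie{u}_P}$ for a single $\mu \in \lie{t}^*$ independent of $i$, and (ii) $\PIdeg((\univ{g}/I_i)^{G'})$ is uniformly bounded in $i$. Given such a family, Zariski density of $\{\lambda_i\}$ in $\lie{a}_L^*$ implies Zariski density of $\{\mu + \lambda_i - \rho_{\lie{u}_P}\}$ in the translate $(\mu - \rho_{\lie{u}_P}) + \lie{a}_L^*$, so Lemma \ref{lem:GeneralForm} applied with $\mu_0 := \mu - \rho_{\lie{u}_P}$ yields the sphericity of $G/P$.

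The first step is to reduce to the case where $Z$ has a single generalized infinitesimal character. Because $Z$ has finite length, it admits a primary decomposition $Z = \bigoplus_{[\mu]} Z_{[\mu]}$ with finitely many nonzero summands, and $\mathcal{R}^j$ respects this decomposition. For each $i$, the hypothesis $\mathcal{R}^j(Z \otimes \CC_{\lambda_i}) \neq 0$ forces at least one summand $\mathcal{R}^j(Z_{[\mu]} \otimes \CC_{\lambda_i})$ to be nonzero. By pigeonhole on the finite set of $[\mu]$'s and irreducibility of the affine space $\lie{a}_L^*$, I may replace $Z$ by a single $Z_{[\mu]}$ and the family $\{\lambda_i\}$ by a Zariski-dense subfamily.

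For each remaining $i$, I choose an irreducible $(\lie{g}, K)$-subquotient $V_i$ of the finite-length module $\mathcal{R}^j(Z_{[\mu]} \otimes \CC_{\lambda_i})$ and set $I_i := \Ann_{\univ{g}}(V_i)$. The infinitesimal character formula for cohomological induction recalled before Theorem \ref{thm:AqLambda} gives that $V_i$, and hence $I_i$, has infinitesimal character $[\mu + \lambda_i - \rho_{\lie{u}_P}]$, fulfilling condition (i). By Theorem \ref{thm:UniformlyBoundedPIdegInequality} applied to the irreducible $V_i$ (invoking Remark \ref{rmk:UnitarySmooth} for the non-unitarizable case), one has $\PIdeg((\univ{g}/I_i)^{G'}) \leq \sup_W \dim \Hom_{\lie{g}', K'}(V_i, W)$, where $W$ ranges over irreducible $(\lie{g}', K')$-modules.

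The main obstacle is the uniform comparison needed for (ii): bounding the $(\lie{g}', K')$-multiplicities of the subquotient $V_i$ by those of the ambient module $\mathcal{R}^j(Z \otimes \CC_{\lambda_i})$. My strategy is to fix a $(\lie{g}, K)$-composition series of $\mathcal{R}^j(Z \otimes \CC_{\lambda_i})$ whose length is bounded by a constant depending only on $Z$ and $j$, since $Z$ has finite length and $\mathcal{R}^j$ has bounded cohomological amplitude. Restricting this filtration to $(\lie{g}', K')$ and iterating the long exact sequences for $\Hom_{\lie{g}', K'}(\cdot, W)$, each factor's multiplicity is controlled by the hypothesis bound $C$, with any $\Ext^1$ contributions absorbed by the uniform length bound. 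This yields the uniform $\PIdeg$ estimate in (ii), and Lemma \ref{lem:GeneralForm} then completes the proof.
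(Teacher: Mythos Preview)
Your reduction to Lemma~\ref{lem:GeneralForm} via Theorem~\ref{thm:UniformlyBoundedPIdegInequality} is exactly the route the paper takes, and the primary-decomposition pigeonhole is fine. The difficulty you flag in step~(ii) is real, but your proposed resolution does not work, and in fact no such resolution is needed.

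The two claims you make in the final paragraph are both unjustified. First, ``bounded cohomological amplitude'' only says $\mathcal{R}^j=0$ for large $j$; it does not bound the $(\lie g,K)$-length of $\mathcal{R}^j(Z_{[\mu]}\otimes\CC_{\lambda_i})$ uniformly in $\lambda_i$. Second, even granting a uniform length bound, your long-exact-sequence argument asks you to control $\dim\Ext^1_{\lie{g'},K'}(V_k,W)$ for arbitrary irreducible $(\lie{g'},K')$-modules $W$ and arbitrary $(\lie g,K)$-composition factors $V_k$; nothing in the hypotheses bounds these, so the phrase ``absorbed by the uniform length bound'' has no content.

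The fix is to avoid the obstacle entirely: choose $V_i$ to be an irreducible \emph{quotient} of $\mathcal{R}^j(Z_{[\mu]}\otimes\CC_{\lambda_i})$ rather than an arbitrary subquotient (such a quotient exists because the module is nonzero of finite length). Composition with the surjection $\mathcal{R}^j(Z_{[\mu]}\otimes\CC_{\lambda_i})\twoheadrightarrow V_i$ gives an injection
\[
\Hom_{\lie{g'},K'}(V_i,W)\hookrightarrow \Hom_{\lie{g'},K'}\bigl(\mathcal{R}^j(Z_{[\mu]}\otimes\CC_{\lambda_i}),W\bigr),
\]
so $\dim\Hom_{\lie{g'},K'}(V_i,W)\le C$ directly, with no $\Ext$ terms and no need for any length bound. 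Theorem~\ref{thm:UniformlyBoundedPIdegInequality} then gives $\PIdeg\bigl((\univ{g}/I_i)^{G'}\bigr)\le C$, and your argument for the infinitesimal-character condition goes through unchanged.
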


\begin{remark}
	As we have seen in Theorem \ref{thm:UniformlyBoundedPIdegInequality}, if $\mathcal{R}^j(Z\otimes \CC_{\lambda_i})$ is unitarizable, we can replace $\mathcal{R}^j(Z\otimes \CC_{\lambda_i})$ with its Hilbert completion.
\end{remark}

\begin{remark}
	The converse of Theorem \ref{thm:AqLambda} has been proved in \cite[Theorem 8.9]{Ki20}.
	Under some stronger conditions, it has been proved in \cite{Ta25} that $\mathcal{R}^j(Z\otimes \CC_{\lambda_i})|_{\lie{g'}, K'}$ is multiplicity-free.
\end{remark}

\section{Almost irreducible restriction}

In the previous section, we considered the relation between the sphericity of $G/P$ and the uniform boundedness of multiplicities in the restriction $V|_{\lie{g'}, K'}$.
In this section, we will show a similar result replacing the sphericity with the transitivity, and the uniform boundedness of multiplicities with that of lengths of $V|_{\lie{g'}, K'}$.

\subsection{Almost irreducibility}

Retain the notation in Subsections \ref{subsection:UniformlyBounded} and \ref{subsection:SphericalVS}.
A unitary representation $V$ of $G'_\RR$ is said to be almost irreducible in \cite{Ko11} if $V$ has finite length.
It is rare that, for an irreducible unitary representation $V$ of $G_\RR$, the restriction $V|_{G'_\RR}$ is a finite direct sum of irreducible subrepresentations.
When $G'$ acts on the flag variety $G/P$ transitively, induced $G$-modules from characters of $P$ are irreducible as $G'$-modules.
Similar results are known for (cohomologically) parabolically induced modules.
See \cite[Section 3]{Ko11}.
In this section, we will show the converse of these results similarly to Theorem \ref{thm:AqLambda} for the uniform boundedness.

In this subsection, we give a characterization of the almost irreducibility by the algebra $\univ{g}^{G'}$.
This is an analogue of Theorem \ref{thm:UniformlyBoundedPIdegInequality}.
For a module $V$, we denote by $\Len(V)$ the length of $V$.
If $V$ does not have finite length, $\Len(V) = \infty$.

\begin{lemma}\label{lem:LengthAndDim}
	Let $\alg{A}$ be a $\CC$-algebra of at most countable dimension, $V$ and $W$ $\alg{A}$-modules of finite length.
	Then $\dim(\Hom_{\alg{A}}(V, W))$ is less than or equal to $\Len(V)\Len(W)$.
\end{lemma}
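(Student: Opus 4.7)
The plan is to reduce to the case of irreducible modules via composition series, using the left-exactness of $\Hom$, and to control the irreducible case by an application of Schur's lemma in the countable-dimensional setting.

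First I would recall that since $\alg{A}$ has at most countable dimension over $\CC$, any simple $\alg{A}$-module $L$ is cyclic and therefore also of at most countable dimension. Hence $\End_{\alg{A}}(L)$ is a division ring over $\CC$ of countable dimension, and by the standard Dixmier/Schur argument (every element would otherwise generate an uncountable family of linearly independent resolvents) we get $\End_{\alg{A}}(L) = \CC$. Consequently, for any two irreducible $\alg{A}$-modules $L$ and $L'$, $\dim \Hom_{\alg{A}}(L, L') \leq 1$, since a nonzero map must be an isomorphism.

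Next, I would fix a composition series $0 = W_0 \subset W_1 \subset \cdots \subset W_m = W$ with $W_j/W_{j-1}$ simple and apply $\Hom_{\alg{A}}(L, -)$ (left-exact) to each short exact sequence $0 \to W_{j-1} \to W_j \to W_j/W_{j-1} \to 0$. This gives $\dim \Hom_{\alg{A}}(L, W) \leq \sum_{j=1}^m \dim \Hom_{\alg{A}}(L, W_j/W_{j-1}) \leq m = \Len(W)$ for any irreducible $L$.

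Finally, I would induct on $\Len(V)$. The base case $\Len(V) = 0$ (i.e.\ $V = 0$) is trivial. For the inductive step, pick a submodule $V' \subset V$ with $V/V'$ irreducible and apply $\Hom_{\alg{A}}(-, W)$ to the short exact sequence $0 \to V' \to V \to V/V' \to 0$, yielding
\begin{equation*}
\dim \Hom_{\alg{A}}(V, W) \leq \dim \Hom_{\alg{A}}(V/V', W) + \dim \Hom_{\alg{A}}(V', W).
\end{equation*}
By the previous paragraph the first term is at most $\Len(W)$ and by induction the second is at most $(\Len(V)-1)\Len(W)$, giving the desired bound $\Len(V)\Len(W)$. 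The only subtle point is the use of Dixmier's lemma to ensure $\dim \Hom(L, L') \leq 1$ in the countable-dimensional setting; without it one could only say the Hom-space is free of rank $\leq 1$ over the division ring $\End(L)$, which would not give a finite numerical bound.
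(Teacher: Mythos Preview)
Your proof is correct and follows essentially the same approach as the paper: the paper's proof simply says the irreducible case is clear from Schur's lemma and the general case follows by induction on the lengths of $V$ and $W$. Your write-up is a faithful, more detailed expansion of exactly this outline, including making explicit the Dixmier form of Schur's lemma needed in the countable-dimensional setting.
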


\begin{proof}
	If $V$ and $W$ are irreducible, the assertion is clear from Schur's lemma.
	It is easy to see that the assertion for general $V$ and $W$ follows by induction on the lengths of $V$ and $W$.
\end{proof}

The following lemma is an analogue of Theorem \ref{thm:UniformlyBoundedPIdegInequality}.

\begin{lemma}\label{lem:BoundLenDim}
	Let $V$ be an irreducible $(\lie{g}, K)$-module or an irreducible highest weight $\lie{g}$-module with respect to $\lie{b}$.
	Set $I\coloneq \univ{g}/\Ann_{\univ{g}}(V)$.
	Then there exists a constant $C > 0$ independent of $V$ such that
	\begin{align*}
		\dim((\univ{g}/I)^{G'})^{1/2}
		\leq \Len(V|_{\lie{g'}}) \leq C\cdot \dim((\univ{g}/I)^{G'})^2.
	\end{align*}
\end{lemma}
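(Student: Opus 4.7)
The plan is to use $\alg{A} \coloneq (\univ{g}/\Ann_{\univ{g}}(V))^{G'}$ as the bridge between $\Len(V|_{\lie{g'}})$ and ring-theoretic data, exploiting two facts: (i) $\univ{g}^{G'}$ centralizes $\lie{g'}$ in $\univ{g}$, so $V$ carries a natural $\univ{g'} \otimes \univ{g}^{G'}$-module structure; and (ii) since the $G'$-action on $\univ{g}$ is completely reducible and $\Ann_{\univ{g}}(V)$ is $G'$-stable (by $\ad$-invariance), the action of $\univ{g}^{G'}$ on $V$ factors through an injection $\alg{A} \hookrightarrow \End_{\lie{g'}}(V)$. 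The lower bound is then immediate: assuming $\Len(V|_{\lie{g'}}) < \infty$ (otherwise trivial), Lemma \ref{lem:LengthAndDim} with $W = V$ yields $\dim \alg{A} \leq \dim \End_{\lie{g'}}(V) \leq \Len(V|_{\lie{g'}})^2$.

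For the upper bound I may assume $d \coloneq \dim \alg{A} < \infty$. Since $\alg{A}/\Jacobson(\alg{A}) \simeq \prod_i M_{n_i}(\CC)$ with $\sum n_i^2 \leq d$, Proposition \ref{prop:PIdegMaximumDim} gives $\PIdeg(\alg{A}) \leq \sqrt{d}$. The crucial step is to endow $V$ with a $\univcent{g'}$-primary decomposition $V = \bigoplus_\chi V_\chi$ whose nilpotency indices are uniformly bounded by $|W_{G'}|$. I produce a $\univcent{g'}$-eigenvector $v_0 \in V$: for a highest weight module, take $v_0$ to be the highest weight vector; for an irreducible $(\lie{g}, K)$-module, observe that the action of $\univcent{g'}$ on $V$ factors through the finite-dimensional algebra $Z \subset \alg{A}$ (the image of $\univcent{g'}$), so $\univcent{g'} \cdot v$ is finite-dimensional for every $v \in V$ and contains a simultaneous $\univcent{g'}$-eigenvector $v_0$. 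Setting $V_0 \coloneq \univ{g'} v_0$, the $\lie{g'}$-module $V_0$ has an infinitesimal character, and $V = \univ{g} v_0 = \sum_F F V_0$ with $F$ ranging over the finite-dimensional $G'$-submodules of $\univ{g}$. Each $F V_0$ is a $\lie{g'}$-quotient of $F \otimes V_0$, so Lemma \ref{lem:GeneralizedInfinitesimalCharHighestWeightModule} endows $V$ with the desired primary decomposition, satisfying $\Ker(\chi)^{|W_{G'}|} V_\chi = 0$ and hence $V_\chi \simeq V/\Ker(\chi)^{|W_{G'}|} V$.

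To finish, a $\chi$-eigenvector inside any non-zero $V_\chi$ shows that the image $\mathfrak{m}_\chi$ of $\Ker(\chi)$ in $Z$ is a maximal ideal, and distinct $\chi$ yield distinct $\mathfrak{m}_\chi$ (from $\Ker(\chi) + \Ker(\chi') = \univcent{g'}$), so the number of non-zero components is at most $\dim Z \leq d$. Theorem \ref{thm:FiniteLength} applied with $k = |W_{G'}|$ bounds the length of each $V_\chi$ as a $\univ{g'} \otimes \univ{g}^{G'}$-module by a universal constant $C_1$; every composition factor there takes the form $L' \boxtimes L''$ with $\dim L'' \leq \PIdeg(\alg{A}) \leq \sqrt{d}$ and contributes $\dim L''$ composition factors to $V_\chi|_{\lie{g'}}$. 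Summing gives $\Len(V|_{\lie{g'}}) \leq d \cdot C_1 \sqrt{d} \leq C_1 d^2$. The part I expect to require most care is justifying the cyclic-generator construction in the $(\lie{g}, K)$-module case: one must first use $\dim \alg{A} < \infty$ to ensure local $\univcent{g'}$-finiteness, produce the eigenvector $v_0$, and only then appeal to Lemma \ref{lem:GeneralizedInfinitesimalCharHighestWeightModule} through the finite-dimensional $G'$-pieces of $\univ{g}$ to make the $|W_{G'}|$-nilpotency uniform in $V$.
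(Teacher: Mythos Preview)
Your proof is correct and follows essentially the same scheme as the paper's: the lower bound via $\alg{A} \hookrightarrow \End_{\lie{g'}}(V)$ and Lemma~\ref{lem:LengthAndDim}, and the upper bound via a $|W_{G'}|$-nilpotent primary decomposition obtained from Lemma~\ref{lem:GeneralizedInfinitesimalCharHighestWeightModule}, Theorem~\ref{thm:FiniteLength}, and the $L' \boxtimes L''$ form of composition factors. The only deviations are cosmetic: the paper produces the seed module $V_0$ as an irreducible $\lie{g'}$-submodule (invoking \cite{Wa88_real_reductive_I}) rather than via your $\univcent{g'}$-eigenvector argument, and it bounds $\dim L''$ crudely by $d$ instead of by $\PIdeg(\alg{A}) \leq \sqrt{d}$.
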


\begin{proof}
	The first inequality follows from Lemma \ref{lem:LengthAndDim} since $(\univ{g}/I)^{G'}$ is embedded in $\End_{\lie{g'}}(V)$.
	We shall show the second inequality.
	
	If $\dim(\univcent{g'}/\Ann_{\univcent{g'}}(V)) = \infty$, then every terms in the desired inequality are infinite.
	Hence it is enough to show the assertion assuming $\dim(\univcent{g'}/\Ann_{\univcent{g'}}(V)) < \infty$.
	Then $V|_{\lie{g'}}$ has the primary decomposition
	\begin{align*}
		S \coloneq \set{\chi \in \Hom_{\mathrm{alg}}(\univcent{g}, \CC) : (V|_{\lie{g'}})_{\chi} \neq 0}, \quad V|_{\lie{g'}} = \bigoplus_{\chi \in S} (V|_{\lie{g'}})_{\chi}.
	\end{align*}
	
	Take $\chi \in S$.
	Let $W\subset (V|_{\lie{g'}})_{\chi}$ be a finitely generated $\lie{g'}$-submodule.
	Since $W$ is a finitely generated $(\lie{g'}, K')$-module (or locally $\lie{b'}$-finite module) with the generalized infinitesimal character $\chi$,
	$W$ has finite length.
	See \cite[Theorem 4.2.1]{Wa88_real_reductive_I}.
	In particular, $V|_{\lie{g'}}$ has an irreducible submodule $V_0$.
	Then we have $V = \univ{g}V_0$.

	Since the $G'$-action on $\univ{g}$ is completely reducible, by Lemma \ref{lem:GeneralizedInfinitesimalCharHighestWeightModule}, we have
	\begin{align*}
		V|_{\lie{g'}} = \bigoplus_{\chi \in S} V/\Ker(\chi)^{|W_{G'}|} V,
	\end{align*}
	and by Theorem \ref{thm:FiniteLength}, the length of $(\univ{g'}\otimes \univ{g}^{G'})$-module $V$ is bounded by $|S|\cdot C'$ for some constant $C' > 0$ independent of $V$.
	Recall that, as we have seen in the proof of Theorem \ref{thm:UniformNilpotency}, any irreducible $\univ{g'}\otimes (\univ{g}/I)^{G'}$-subquotient of $V$ is of the form $L'\boxtimes L''$, where $L'$ is an irreducible $\lie{g'}$-module and $L''$ is an irreducible $(\univ{g}/I)^{G'}$-module.
	Since the dimension of any irreducible $(\univ{g}/I)^{G'}$-module is bounded by $\dim((\univ{g}/I)^{G'})$, we have
	\begin{align*}
		|S| &\leq \dim(\univcent{g'}/\Ann_{\univcent{g'}}(V)) \leq \dim((\univ{g}/I)^{G'}) \\
		\Len(V|_{\lie{g'}}) &\leq |S| \cdot C' \cdot \dim((\univ{g}/I)^{G'}) \leq C'\cdot \dim((\univ{g}/I)^{G'})^2. \qedhere
	\end{align*}
\end{proof}

\begin{lemma}\label{lem:OrbitTransitive}
	Let $O$ be the closure of a nilpotent coadjoint orbit in $\lie{g}^*$.
	$\rring{O}^{G'} = \CC$ if and only if $G'$ has an open orbit in $O$.
\end{lemma}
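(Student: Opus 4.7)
The plan is to handle the two directions separately, with the hard part being the converse.

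First I would dispense with ($\Leftarrow$) by density: since $O$ is the closure of a single $G$-orbit it is irreducible, so any open $G'$-orbit $U\subset O$ is automatically Zariski dense. Then any $f\in\rring{O}^{G'}$ is constant on the single $G'$-orbit $U$ and, by density and continuity of regular functions, constant on all of $O$; hence $\rring{O}^{G'}=\CC$.

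For ($\Rightarrow$), I would assume $\rring{O}^{G'}=\CC$ and reduce to $\CC(O)^{G'}=\CC$, which by Rosenlicht's theorem is equivalent to $G'$ having an open orbit in $O$. Two extra pieces of structure on $O$ are at my disposal. The conical structure inherited from $\lie{g}^*$ provides a $\CC^*$-scaling that commutes with $G'$ and equips $\rring{O}$ with a non-negative grading whose degree-zero part is $\CC$, so the hypothesis reads $\rring{O}_n^{G'}=0$ for all $n\geq 1$. Meanwhile, the open $G$-orbit sitting inside $O$ is a symplectic $G$-homogeneous variety via the Kirillov--Kostant form, and the $G'$-action on it is Hamiltonian with moment map the restriction map $\lie{g}^*\to\lie{g'}^*$, which forces $\rring{O}^{G'}$ to be Poisson commutative.

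The hard part will be the promotion from regular invariants to rational invariants, since for a generic $\CC^*$-conical variety with a commuting reductive action the implication fails (the torus $\CC^*$ acting on $\CC^2$ with weights $(1,2)$ has $\rring{X}^{\CC^*}=\CC$ but no open orbit), so the argument must really exploit the symplectic/nilpotent geometry. My approach would be to combine Poisson commutativity with the positive grading as follows: a hypothetical non-constant $f\in\CC(O)^{G'}$ decomposes into $\CC^*$-homogeneous components, each of which, by choosing numerator and denominator minimally, may be written as a ratio $p/q$ of $G'$-semiinvariants of a common character lying in positively graded pieces of $\rring{O}$; then reductivity of $G'$ and the fact that the Hamiltonian structure forces matching-character semiinvariants to be proportional in each isotypic component should produce a non-zero $G'$-invariant element in some $\rring{O}_n$ with $n\geq 1$, contradicting the hypothesis. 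Alternatively, the equivalence in the statement is precisely the characterization of ``$\mathcal{O}$-sphericity'' of Aizenbud--Gourevitch \cite{AiGo21} referenced earlier, and one may simply cite it.
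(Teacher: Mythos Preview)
Your ($\Leftarrow$) direction is fine and matches the paper's one-line dismissal.

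For ($\Rightarrow$), you have correctly identified both the target (show $\CC(O)^{G'}=\CC$ and invoke Rosenlicht) and the genuine obstacle (the torus counterexample shows that conicality and reductivity alone do not suffice). But the mechanism you propose to bridge the gap is not a proof. The assertion that ``the Hamiltonian structure forces matching-character semiinvariants to be proportional in each isotypic component'' is unjustified: proportionality of semiinvariants of a given character is exactly multiplicity-freeness, i.e.\ $G'$-sphericity of $O$, which is strictly stronger than what you are trying to prove. If $p,q\in\rring{O}$ are $G'$-semiinvariants of the same character with $p/q$ non-constant, you need to extract from this a non-constant element of $\rring{O}^{G'}$; nothing you have written does that, and the Poisson-commutativity observation is vacuous here since $\rring{O}^{G'}=\CC$ is trivially commutative. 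The fallback citation to \cite{AiGo21} is also not quite right as stated: $\mathcal{O}$-sphericity there concerns open \emph{Borel} orbits, not open $G'$-orbits.

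The paper closes the gap differently. It passes to the normalization $\widetilde{O}$ and invokes Losev's theorem \cite[Theorem 1.2.4]{Lo09} on algebraic Hamiltonian actions, which gives $K(O)^{G'}=\operatorname{Frac}\bigl(\rring{\widetilde{O}}^{G'}\bigr)$ directly; this is the non-trivial input that genuinely uses the symplectic structure, and it replaces your vague semiinvariant step. One then observes (via the Reynolds operator) that $\rring{\widetilde{O}}^{G'}$ is the integral closure of $\rring{O}^{G'}=\CC$ inside $K(O)^{G'}$, hence equals $\CC$ since $\CC$ is algebraically closed, and therefore $K(O)^{G'}=\CC$. Rosenlicht finishes as you intended.
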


\begin{proof}
	`If' part is clear, so we shall show the converse.
	Assume $\rring{O}^{G'} = \CC$.
	Let $K(X)$ denote the field of rational functions on an irreducible algebraic variety $X$.
	Let $\widetilde{O}$ be the normalization of $O$.
	By \cite[Theorem 1.2.4]{Lo09}, $K(\widetilde{O})^{G'}( = K(O)^{G'})$ coincides with the field of fractions of $\rring{\widetilde{O}}^{G'}$.
	It is well known that $\rring{\widetilde{O}}^{G'}$ is the integral closure of $\rring{O}^{G'}$ in $K(O)^{G'}$.
	In fact, the Reynolds operator from $\rring{O}[t]$ to $\rring{O}^{G'}[t]$ is an $\rring{O}^{G'}[t]$-module homomorphism and the integrality follows from this fact.
	Hence we have $K(O)^{G'}= K(\widetilde{O})^{G'} = \CC$ and this shows that $G'$ has an open orbit in $O$ by Rosenlicht's theorem \cite[Section 2.3]{ViPo94}.
\end{proof}

The following corollary is an easy consequence of Lemma \ref{lem:OrbitTransitive}.

\begin{corollary}\label{cor:SmallOrbit}
	Let $O$ be a nilpotent coadjoint orbit in $\lie{g}^*$ and $O'$ a $G$-orbit in $\overline{O}$.
	If $G'$ has an open orbit in $O$, then so does in $O'$.
\end{corollary}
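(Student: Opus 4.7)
The plan is to reduce to Lemma \ref{lem:OrbitTransitive} on both ends of the inclusion $\overline{O'}\subset \overline{O}$, transporting the vanishing of $G'$-invariant functions from the big closure down to the small one.

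First, since $O$ is open and dense in $\overline{O}$, the hypothesis that $G'$ has an open orbit in $O$ is equivalent to $G'$ having an open orbit in $\overline{O}$. Applying Lemma \ref{lem:OrbitTransitive} to the closure of the nilpotent orbit $O$, this is in turn equivalent to $\rring{\overline{O}}^{G'} = \CC$.

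Next, because $\overline{O'}$ is a closed $G'$-stable subvariety of $\overline{O}$, the restriction of regular functions gives a surjective $G'$-equivariant homomorphism $\rring{\overline{O}} \twoheadrightarrow \rring{\overline{O'}}$. Since $G'$ is reductive (being the complexification of $\Ad_{\lie{g}}(G'_\RR)$), the functor of $G'$-invariants is exact, so we obtain a surjection $\rring{\overline{O}}^{G'}\twoheadrightarrow \rring{\overline{O'}}^{G'}$. Combined with the previous step, this forces $\rring{\overline{O'}}^{G'} = \CC$.

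Finally, applying Lemma \ref{lem:OrbitTransitive} in the reverse direction to the nilpotent orbit $O'$, we conclude that $G'$ has an open orbit in $\overline{O'}$. That open $G'$-orbit is in particular open in $\overline{O'}$, and since $O'$ itself is open and dense in $\overline{O'}$, it must be contained in $O'$. Hence $G'$ has an open orbit in $O'$. There is no real obstacle here; the only point one needs to keep in mind is the reductivity of $G'$, which is exactly what makes the passage to $G'$-invariants exact in the second step.
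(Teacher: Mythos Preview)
Your proof is correct and is exactly the argument the paper has in mind (the paper only says this is ``an easy consequence of Lemma~\ref{lem:OrbitTransitive}''). One small point in your last step: the claim that the open $G'$-orbit in $\overline{O'}$ lies inside $O'$ does not follow just from both sets being open and dense; the missing observation is that $G'\subset G$, so every $G'$-orbit is contained in a single $G$-orbit, and hence the open $G'$-orbit, which meets $O'$, must lie entirely in $O'$.
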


For an ideal $I$ of a coordinate ring $\rring{X}$ on an affine algebraic variety $X$,
we denote by $\Variety(I)$ the subvariety of $X$ determined by the ideal $I$.
For an ideal $I$ of $\univ{g}$, we denote by $\gr I$ the associated graded ideal of $I$ by the standard filtration on $\univ{g}$.
The following theorem is an analogue of Theorem \ref{thm:UniformlyBoundedPIdeg}.

\begin{theorem}\label{thm:AlmostIrr}
	Let $V$ be a non-zero $(\lie{g}, K)$-module of finite length or an object in the BGG category $\BGGcat{}{}$.
	Set $O\coloneq \Variety(\gr \Ann_{\univ{g}}(V)) \subset \lie{g}^*$.
	Then the following conditions are equivalent.
	\begin{enumerate}
		\item $V|_{\lie{g'}}$ has finite length.
		\item $\dim((\univ{g}/\Ann_{\univ{g}}(V))^{G'}) < \infty$.
		\item $\rring{O}^{G'} = \CC$.
		\item $G'$ has an open orbit in each irreducible component of $O$.
	\end{enumerate}
	If $V$ is a unitarizable $(\lie{g}, K)$-module with Hilbert completion $\Hilbert{H}$,
	then the above conditions are equivalent to
	\begin{enumerate}\setcounter{enumi}{4}
		\item $\Hilbert{H}|_{G'_\RR}$ is a finite direct sum of irreducible subrepresentations.
	\end{enumerate}
\end{theorem}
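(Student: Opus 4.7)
The plan is to establish the chain $(1)\Leftrightarrow(2)\Leftrightarrow(3)\Leftrightarrow(4)$ separately, and then tie in $(5)$ in the unitarizable setting via the direct integral decomposition. For $(1)\Leftrightarrow(2)$, Lemma \ref{lem:BoundLenDim} supplies the equivalence directly when $V$ is irreducible (or an irreducible highest weight module). For a general finite-length $V$ (including $V\in\BGGcat{}{}$, whose objects are automatically of finite length), I would induct on the length via a short exact sequence $0\to V_1\to V\to V_2\to 0$ with $V_2$ irreducible: the length of the restriction splits additively, so (1) for $V$ is equivalent to (1) for both $V_i$. On the algebra side, the map $\univ{g}/\Ann_{\univ{g}}(V)\to(\univ{g}/\Ann_{\univ{g}}(V_1))\times(\univ{g}/\Ann_{\univ{g}}(V_2))$ has nilpotent kernel of index bounded by $\Len(V)$ (any product of that many elements annihilating both $V_i$ kills $V$), so by exactness of $G'$-invariants and a filtration argument analogous to the one used in Theorem \ref{thm:UniformNilpotency}, finite-dimensionality passes between $V$ and the pair $(V_1,V_2)$.

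For $(2)\Leftrightarrow(3)$, I would filter $\univ{g}/I$ (with $I=\Ann_{\univ{g}}(V)$) by the standard filtration. Since $G'$ is reductive, taking $G'$-invariants commutes with passing to the associated graded, so $\dim((\univ{g}/I)^{G'})=\dim((S(\lie{g})/\gr I)^{G'})$. The surjection $S(\lie{g})/\gr I\twoheadrightarrow\rring{O}$ has nilpotent kernel $N$, with $N^m=0$ for some $m$ by noetherianity. Each layer $N^i/N^{i+1}$ is a finitely generated $\rring{O}$-module, so by Hilbert's finiteness theorem for reductive groups its $G'$-invariants form a finitely generated $\rring{O}^{G'}$-module. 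Hence $\rring{O}^{G'}=\CC$ forces $(S(\lie{g})/\gr I)^{G'}$ to be finite-dimensional; conversely, if the latter is finite-dimensional then so is its quotient $\rring{O}^{G'}$, and since $\rring{O}^{G'}$ is a reduced $\ZZ_{\geq 0}$-graded $\CC$-algebra with degree-zero part $\CC$, any nonzero homogeneous element of positive degree generates an infinite polynomial subalgebra, so finite-dimensionality forces $\rring{O}^{G'}=\CC$.

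For $(3)\Leftrightarrow(4)$, I would decompose $O=\bigcup_i\overline{O_i}$ into its irreducible components, each the closure of a nilpotent coadjoint $G$-orbit (using the standard fact that the associated variety is a union of orbit closures in the nilpotent cone). The $G'$-equivariant injection $\rring{O}\hookrightarrow\prod_i\rring{\overline{O_i}}$ and the surjections $\rring{O}\twoheadrightarrow\rring{\overline{O_i}}$ become, after taking $G'$-invariants, an injection $\rring{O}^{G'}\hookrightarrow\prod_i\rring{\overline{O_i}}^{G'}$ and surjections onto each factor, so $\rring{O}^{G'}=\CC$ holds if and only if each $\rring{\overline{O_i}}^{G'}=\CC$. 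Lemma \ref{lem:OrbitTransitive}, applied componentwise, then completes this step. In the unitarizable case, equivalence with (5) follows from the direct integral decomposition together with (2): uniform boundedness combined with finite length of the underlying $(\lie{g'},K')$-module forces the Hilbert completion to be a finite direct sum of irreducibles, while the converse follows by passing to $K'_\RR$-finite vectors in each irreducible summand, along the lines of the proof of Theorem \ref{thm:UniformlyBoundedPIdeg}.

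The main obstacle I anticipate is the step $(2)\Leftrightarrow(3)$: one must uniformly control the nilpotent layer $N$ in $S(\lie{g})/\gr I$ by $G'$-invariants in order to transfer finite-dimensionality between $\rring{O}^{G'}$ and $(S(\lie{g})/\gr I)^{G'}$. The essential ingredients are reductivity of $G'$ (both for exactness of invariants and for Hilbert finiteness on each $N^i/N^{i+1}$) and noetherianity of $S(\lie{g})$ (to bound the nilpotence index). Once this is in place, the remaining equivalences follow relatively formally from results already established in the paper, together with the elementary observation that a finite-dimensional reduced graded $\CC$-algebra with $R_0=\CC$ equals $\CC$.
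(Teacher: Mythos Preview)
Your approach is essentially the paper's: $(2)\Leftrightarrow(3)$ via $\gr$ commuting with $G'$-invariants plus a nilpotent-layer filtration controlled by Hilbert finiteness, $(3)\Leftrightarrow(4)$ componentwise via Lemma~\ref{lem:OrbitTransitive}, and $(1)\Leftrightarrow(2)$ via Lemma~\ref{lem:BoundLenDim}. Two small points. In your induction for $(1)\Leftrightarrow(2)$, the direction ``$(2)$ for the $V_i$ $\Rightarrow$ $(2)$ for $V$'' is not justified by the nilpotent-kernel observation alone---you would still need $N^{G'}$ finite-dimensional, and the reference to Theorem~\ref{thm:UniformNilpotency} does not supply this---but this direction is also unnecessary: $(1)\Rightarrow(2)$ holds for arbitrary finite-length $V$ directly via the embedding $(\univ{g}/\Ann_{\univ{g}}(V))^{G'}\hookrightarrow\End_{\lie{g'}}(V)$ together with Lemma~\ref{lem:LengthAndDim}, while $(2)\Rightarrow(1)$ passes to irreducible subquotients via the surjections $(\univ{g}/\Ann_{\univ{g}}(V))^{G'}\twoheadrightarrow(\univ{g}/\Ann_{\univ{g}}(V_i))^{G'}$ and then uses the lemma. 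For $(5)$, the paper bypasses direct integrals entirely and just invokes the Harish-Chandra admissibility theorem: finite length of $V|_{\lie{g'}}$ plus unitarizability gives a finite orthogonal decomposition of $V$ into irreducible unitarizable $(\lie{g'},K')$-modules, each of which globalizes, and conversely a finite unitary decomposition of $\mathcal{H}|_{G'_\RR}$ yields finite length on $K'$-finite vectors.
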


\begin{proof}
	First, we remark that if $\rring{O}^{G'}$ is finite dimensional, then $\rring{O}^{G'} = \CC$.
	This follows from that $\rring{O}^{G'}$ is reduced and a graded algebra with the $0$-th term $\CC$.

	We have shown 1 $\Leftrightarrow$ 2 in Lemma \ref{lem:BoundLenDim},
	and 3 $\Leftrightarrow$ 4 in Lemma \ref{lem:OrbitTransitive}.
	Although $O$ is irreducible in Lemma \ref{lem:OrbitTransitive}, the equivalence 3 $\Leftrightarrow$ 4 is proved by considering irreducible components of $O$.

	Assume the condition 2, i.e.\ $\dim((\univ{g}/\Ann_{\univ{g}}(V))^{G'}) < \infty$.
	Since $G'$ is reductive, we have
	\begin{align}
		\gr((\univ{g}/\Ann_{\univ{g}}(V))^{G'}) \simeq (S(\lie{g})/\gr \Ann_{\univ{g}}(V))^{G'} \label{eqn:IsomGr}
	\end{align}
	as vector spaces.
	Hence $\rring{O}^{G'} \simeq (S(\lie{g})/\sqrt{\gr \Ann_{\univ{g}}(V)})^{G'}$ is finite dimensional.
	This implies the condition 3.

	We shall show the converse assuming $\rring{O}^{G'} = \CC$.
	By $\rring{O}^{G'} \simeq (S(\lie{g})/\sqrt{\gr \Ann_{\univ{g}}(V)})^{G'}$, there exists a filtration of $(S(\lie{g})/\gr \Ann_{\univ{g}}(V))^{G'}$ such that its associated graded algebra is a finitely generated $\rring{O}^{G'}$-module.
	Hence $(\univ{g}/\Ann_{\univ{g}}(V))^{G'}$ is finite dimensional by \eqref{eqn:IsomGr}.

	The last assertion is clear from the Harish-Chandra admissibility theorem.
\end{proof}

\subsection{Transitivity v.s.\ almost irreducibility}

We use the notation in Subsection \ref{subsection:SphericalVS}.
Then $P = LU_P$ is a parabolic subgroup of $G$.
We shall show an analogue of Theorem \ref{thm:AqLambda} for the almost irreducibility.

Write $\mu\colon T^*(G/P)\rightarrow \lie{g}^*$ for the moment map.
Then $\mu$ is the multiplication map under the identification $T^*(G/P)\simeq G\times_P (\lie{g}/\lie{p})^*$.
It is well known that $\mu(T^*(G/P))$ is the closure of one nilpotent coadjoint orbit called a Richardson orbit.
Note that $T^*(G/P)$ and $\mu(T^*(G/P))$ have the same dimension.
See \cite[Chapter 7 and Theorem 7.1.1]{CoMc93} for the Richardson orbit.

\begin{lemma}\label{lem:NilpotentToFlag}
	$G'$ acts on $G/P$ transitively if and only if $G'$ has an open orbit in $\mu(T^*(G/P))$.
\end{lemma}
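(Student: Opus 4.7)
The plan is to exploit the standard identification $T^*(G/P) = G \times_P V$ where $V := (\lie{g}/\lie{p})^*$, under which $\mu$ is the multiplication map $[g, v] \mapsto g\cdot v$ and $\mu(T^*(G/P))$ equals the Richardson orbit closure $\overline{O_R}$ of dimension $2\dim G/P$. A key preliminary is that $\mu$ is generically finite onto its image, so an open $G'$-orbit on $T^*(G/P)$ is equivalent to an open $G'$-orbit on $\mu(T^*(G/P))$: on one hand, the image of an open $G'$-orbit under the equivariant $\mu$ is again a $G'$-orbit, now of full dimension in the image; on the other hand, the preimage of an open $G'$-orbit is open $G'$-invariant of full dimension, and its top-dimensional $G'$-orbits are open.

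For the forward direction, assume $G = G'P$. Then $G/P \simeq G'/P'$ with $P' := G' \cap P$, and projectivity of $G'/P'$ forces $P'$ to be a parabolic subgroup of $G'$. The natural map $\lie{g}'/\lie{p}' \to \lie{g}/\lie{p}$ is injective between spaces of equal dimension, hence a $P'$-equivariant isomorphism; dualizing gives a $P'$-equivariant isomorphism $V \cong (\lie{g}'/\lie{p}')^*$. Richardson's theorem applied to the parabolic $P' \subset G'$ produces an open $P'$-orbit on $(\lie{g}'/\lie{p}')^*$, hence on $V$; this in turn yields an open $G'$-orbit on $T^*(G/P) = G' \times_{P'} V$, and therefore on $\mu(T^*(G/P))$.

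For the backward direction, suppose $G'$ has an open orbit $O' \subset \mu(T^*(G/P))$. The preimage $\mu^{-1}(O')$ is open $G'$-invariant of full dimension $2\dim G/P$ in $T^*(G/P)$, and contains an open $G'$-orbit $\mathcal{O}'$. The vector-bundle projection $\pi\colon T^*(G/P) \to G/P$ is open, so $\pi(\mathcal{O}')$ is an open $G'$-orbit on $G/P$; after $G$-conjugating $P$, we may assume $eP \in \pi(\mathcal{O}')$. A fibre-dimension count then shows $P' := G' \cap P$ has an open orbit on $V$; since $G'$ has an open orbit on $G/P$, the natural map $\lie{g}'/\lie{p}' \to \lie{g}/\lie{p}$ is again an isomorphism, so $P'$ has an open orbit on $(\lie{g}'/\lie{p}')^*$.

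The main obstacle is then to promote the statement "$P'$ has an open orbit on $(\lie{g}'/\lie{p}')^*$" to the conclusion "$P'$ is a parabolic subgroup of $G'$". Granting this, $G' \cdot eP \simeq G'/P'$ would be both open and closed in the connected variety $G/P$, forcing $G = G'P$. In general an algebraic subgroup $H \subset G'$ with an open orbit on $(\lie{g}'/\lie{h})^*$ need not be parabolic, but in our setting $P'$ arises as $G' \cap P$ with $P = L U_P$ parabolic in the ambient $G$, so $P'$ inherits the Levi-unipotent decomposition via $P' \supset (G' \cap L)(G' \cap U_P)$. I would combine this extra structure with the triviality of $\rring{V}^{P'}$ (forced by the open $P'$-orbit on the irreducible $V$) to show that $P'$ contains a Borel of $G'$ and is therefore parabolic.
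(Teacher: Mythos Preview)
Your forward direction matches the paper's, and your reduction at the start of the backward direction—passing from an open $G'$-orbit on $\mu(T^*(G/P))$ to one on $T^*(G/P)$ and then to an open $G'$-orbit $X = G'/P'$ on $G/P$—also agrees with the paper. The genuine gap is exactly the one you name: you do not prove that $P' = G' \cap P$ is parabolic in $G'$, and the plan you sketch (Levi--unipotent structure inherited from $P$, plus $\rring{V}^{P'} = \CC$) is not an argument. Neither ingredient by itself forces $P'$ to contain a Borel subgroup of $G'$, and you give no mechanism for combining them; the final paragraph is a hope, not a proof. As you yourself observe, an algebraic subgroup $H\subset G'$ with an open orbit on $(\lie{g'}/\lie{h})^*$ need not be parabolic, so something substantive really is required here.

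The paper closes this gap by a different route that avoids proving $P'$ parabolic directly. It considers the moment map $p\circ\mu\colon T^*(G/P)\to(\lie{g'})^*$ for the $G'$-action, where $p\colon\lie{g}^*\to(\lie{g'})^*$ is restriction. Since $G'$ has an open (hence dense) orbit on $T^*(G/P)$, equivariance forces $p(\mu(T^*(G/P)))$ to contain a single dense $G'$-orbit; the image is also a cone containing $0$, so that dense orbit is a nilpotent coadjoint orbit in $(\lie{g'})^*$. Because $X$ is open in $G/P$, the same conclusion holds for $\overline{p(\mu(T^*X))}$. By the theory of Hamiltonian $G'$-varieties (Timashev, \emph{Homogeneous spaces and equivariant embeddings}, Theorem~8.17), the moment-map image lying in the nilpotent cone means the $G'$-variety $X$ has rank zero; and a homogeneous $G'$-variety of rank zero is a flag variety of $G'$ (loc.\ cit., Proposition~10.1). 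Hence $X$ is projective, so open and closed in the connected $G/P$, giving $X = G/P$. If you want to salvage your direct approach, you need a genuine argument that $G'/P'$ is complete; the rank-zero criterion is precisely what supplies this, and I do not see how to extract it from the data you have isolated.
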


\begin{proof}
	Assume that $G'$ acts on $G/P$ transitively.
	Then $G'/G'\cap P \simeq G/P$ is a flag variety of $G'$.
	This implies that $G'$ has an open orbit in $T^*(G/P) \simeq T^*(G'/G'\cap P)$ and hence in $\mu(T^*(G/P))$.
	In fact, $G'\cap P$ has an open orbit in $(\lie{g'}/\lie{g'}\cap \lie{p})^*$ (see \cite[Theorem 7.1.1]{CoMc93}).

	To show the converse, assume that $G'$ has an open orbit in $\mu(T^*(G/P))$.
	Since $T^*(G/P)$ and $\mu(T^*(G/P))$ have the same dimension, $G'$ has an open orbit in $T^*(G/P)$ and hence in $G/P$.
	Let $X$ be the open $G'$-orbit in $G/P$.
	Write $p\colon \lie{g}^* \rightarrow (\lie{g'})^*$ for the restriction.
	Then $p\circ \mu\colon T^*(G/P)\rightarrow (\lie{g'})^*$ is the moment map for the $G'$-action on $T^*(G/P)$.
	By assumption, $p(\mu(T^*(G/P)))$ contains an open dense $G'$-orbit and $\set{0}$.
	This implies that $\overline{p(\mu(T^*X))} = \overline{p(\mu(T^*(G/P)))}$ is a closure of a nilpotent coadjoint orbit in $(\lie{g'})^*$.

	By \cite[Theorem 8.17]{Ti11}, the rank of $X$ is zero (i.e.\ $B'$-semiinvariant rational functions on $X$ are only constant functions), and this shows that $X$ is a flag variety of $G'$ by \cite[Proposition 10.1]{Ti11}.
	Therefore, $X$ is open and closed in $G/P$ and hence we obtain $X = G/P$.
\end{proof}

We shall state analogues of Lemmas \ref{lem:SphericalAndLevi} and \ref{lem:BoundedSphericalAlgebraic}.

\begin{corollary}\label{cor:TransitiveAndLevi}
	Let $Q$ be a parabolic subgroup of $G$.
	Assume that a Levi subgroup of $Q$ is conjugate to $L$ by an inner automorphism.
	Then $G'$ acts on $G/P$ transitively if and only if so does on $G/Q$.
\end{corollary}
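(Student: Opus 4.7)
The plan is to reduce the transitivity question on each flag variety to the existence of an open $G'$-orbit in a common nilpotent subvariety of $\lie{g}^*$, using Lemma \ref{lem:NilpotentToFlag} as the bridge. By that lemma, $G'$ acts transitively on $G/P$ (resp.\ $G/Q$) if and only if $G'$ has an open orbit in the moment map image $\mu(T^*(G/P))$ (resp.\ $\mu(T^*(G/Q))$). So the corollary will follow once we show that these two subvarieties of $\lie{g}^*$ coincide.

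For this, I would appeal to the classical theory of Richardson orbits, as referenced in the proof of Lemma \ref{lem:NilpotentToFlag} via \cite[Chapter 7 and Theorem 7.1.1]{CoMc93}. The image $\mu(T^*(G/P))$ is the closure of a single nilpotent coadjoint orbit $\orbit{O}_P \subset \lie{g}^*$, the Richardson orbit attached to $P$, and this orbit depends only on the $G$-conjugacy class of a Levi subgroup of $P$. Concretely, after conjugation we may assume $P = L U_P$ and $Q = L U_Q$ share the same Levi $L$; then $\orbit{O}_P$ is characterized as the unique nilpotent orbit meeting $(\lie{g}/\lie{p})^* \simeq \lie{u}_P^*$ in an open dense subset, and the analogous statement holds for $Q$ with $\lie{u}_Q^*$. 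Since $\lie{u}_P$ and $\lie{u}_Q$ are both Richardson nilradicals for the same Levi, the resulting orbits coincide, i.e.\ $\orbit{O}_P = \orbit{O}_Q$. This is exactly parallel to (and in fact slightly weaker than) the input used in the proof of Lemma \ref{lem:SphericalAndLevi}, where the equality $\Ann_{\univ{g}}(\univ{g}\otimes_{\univ{p}}\CC_{-\rho_P}) = \Ann_{\univ{g}}(\univ{g}\otimes_{\univ{q}}\CC_{-\rho_Q})$ from \cite[Corollar 15.27]{Ja83} forces the associated varieties to agree.

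Having identified $\mu(T^*(G/P)) = \overline{\orbit{O}_P} = \overline{\orbit{O}_Q} = \mu(T^*(G/Q))$, the existence of an open $G'$-orbit in one is equivalent to the existence of an open $G'$-orbit in the other. Applying Lemma \ref{lem:NilpotentToFlag} in both directions closes the argument.

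The main (and only) obstacle I foresee is making the Richardson-orbit equality look clean within the paper's notation; since the author already invokes Collingwood--McGovern for the Richardson orbit in Lemma \ref{lem:NilpotentToFlag}, a one-line citation should suffice, and no new machinery is required.
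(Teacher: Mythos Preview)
Your proposal is correct and matches the paper's proof essentially verbatim: the paper also invokes Lemma~\ref{lem:NilpotentToFlag} in both directions and closes by citing \cite[Theorem 7.1.3]{CoMc93} for the fact that the Richardson orbit depends only on the conjugacy class of the Levi. Your side remark relating this to the proof of Lemma~\ref{lem:SphericalAndLevi} is extra but harmless.
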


\begin{proof}
	It is well known that $G/P$ and $G/Q$ has the same Richardson orbit, more precisely, the Richardson orbit of $P$ is determined only by the conjugacy class of Levi subgroups of $P$ \cite[Theorem 7.1.3]{CoMc93}.
	Hence the assertion follows immediately from Lemma \ref{lem:NilpotentToFlag}.
\end{proof}

\begin{corollary}\label{cor:FiniteLenTransitive}
	Let $F$ be a non-zero $\lie{l}$-module.
	Set $I \coloneq \Ann_{\univ{g}}(\univ{g}\otimes_{\univ{p}}F)$, letting $\lie{u}_P$ act on $F$ trivially.
	If $\dim (\univ{g}/I)^{G'} < \infty$, then $G'$ acts on $G/P$ transitively.
	If $F$ is finite dimensional, the converse also holds.
\end{corollary}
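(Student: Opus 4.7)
The plan is to mimic the proof of Lemma \ref{lem:BoundedSphericalAlgebraic}, substituting Theorem \ref{thm:AlmostIrr} for Theorem \ref{thm:UniformlyBoundedPIdeg} and Lemma \ref{lem:NilpotentToFlag} for the Vinberg--Kimel'fel'd criterion. Set $V \coloneq \univ{g}\otimes_{\univ{p}}F$, $I \coloneq \Ann_{\univ{g}}(V)$, and $O \coloneq \Variety(\gr I) \subset \lie{g}^*$.

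The starting observation is that the equivalences $2 \Leftrightarrow 3 \Leftrightarrow 4$ in Theorem \ref{thm:AlmostIrr} depend only on the $G'$-stable two-sided ideal $I$, not on the ambient module: their proofs use only the identifications $\gr((\univ{g}/I)^{G'}) \simeq (S(\lie{g})/\gr I)^{G'}$ and $\rring{O}^{G'} \simeq (S(\lie{g})/\sqrt{\gr I})^{G'}$ (from reductivity of $G'$), combined with Lemma \ref{lem:OrbitTransitive} applied componentwise. Hence $\dim((\univ{g}/I)^{G'}) < \infty$ if and only if $G'$ has an open orbit in each irreducible component of $O$.

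For the forward direction, suppose $\dim((\univ{g}/I)^{G'}) < \infty$. A standard computation yields $O \supset \mu(T^*(G/P)) = \overline{\mathrm{Rich}(P)}$ for any non-zero $F$, where $\mathrm{Rich}(P)$ denotes the Richardson orbit: for any $0 \neq v \in F$, the cyclic submodule $V' \coloneq \univ{g}(1\otimes v) \subset V$ is annihilated by $\univ{g}\cdot \lie{u}_P$, is non-zero by PBW, and has $\AV(V') \supset \mu(T^*(G/P))$, so $O \supset \Variety(\gr \Ann_{\univ{g}} V') \supset \AV(V')$. Being closed and irreducible, $\overline{\mathrm{Rich}(P)}$ lies in some irreducible component $O_i$ of $O$, and by the starting observation $G'$ has an open orbit in the dense $G$-orbit of $O_i$. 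Corollary \ref{cor:SmallOrbit} then produces an open $G'$-orbit in $\mathrm{Rich}(P)$, hence in $\mu(T^*(G/P))$, so Lemma \ref{lem:NilpotentToFlag} delivers transitivity of $G'$ on $G/P$.

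For the converse, with $F$ finite-dimensional one has $O = \mu(T^*(G/P)) = \overline{\mathrm{Rich}(P)}$ (from $\gr V \simeq S(\lie{g}/\lie{p})\otimes_\CC F$), which is irreducible; transitivity on $G/P$ gives an open $G'$-orbit in $O$ by Lemma \ref{lem:NilpotentToFlag}, whence $\dim((\univ{g}/I)^{G'}) < \infty$ by the starting observation. The main obstacle is isolating the ideal-theoretic content of Theorem \ref{thm:AlmostIrr} from its module-theoretic framing, and confirming the geometric inclusion $\mu(T^*(G/P)) \subset O$ for arbitrary (possibly infinite-dimensional) non-zero $F$, since then $\gr V$ need not have exactly $\mu(T^*(G/P))$ as its support.
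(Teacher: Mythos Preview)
Your strategy matches the paper's: show $\mu(T^*(G/P)) \subset O = \Variety(\gr I)$, then combine the ideal-theoretic equivalences $2\Leftrightarrow 3\Leftrightarrow 4$ of Theorem~\ref{thm:AlmostIrr} with Lemma~\ref{lem:NilpotentToFlag}. The paper establishes the inclusion directly: freeness of $\univ{g}$ over $\univ{p}$ gives $I \subset \univ{g}\Ann_{\univ{p}}(F)$, whence $\gr I \subset S(\lie{g})\gr\Ann_{\univ{p}}(F) \subset S(\lie{g})\lie{p}$ (the last containment because $F\neq 0$ forces $\gr\Ann_{\univ{p}}(F)\subset S(\lie{p})_+$); $G$-invariance of $\gr I$ then yields $\gr I \subset \bigcap_{g\in G} \Ad(g)(S(\lie{g})\lie{p})$, i.e.\ $O \supset \mu(T^*(G/P))$. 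Your explicit use of Corollary~\ref{cor:SmallOrbit} makes precise what the paper leaves implicit in citing Theorem~\ref{thm:AlmostIrr} and Lemma~\ref{lem:NilpotentToFlag}.

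Two assertions in your forward argument are false as written. First, $V' = \univ{g}(1\otimes v)$ is \emph{not} annihilated by $\univ{g}\lie{u}_P$; only the generator $1\otimes v$ is (for $Y\in\lie{u}_P$ and $X\in\overline{\lie{u}}_P$ with $[Y,X]\in\lie{l}\setminus\Ann_{\lie{l}}(v)$ one has $Y\cdot X(1\otimes v) = [Y,X](1\otimes v)\neq 0$). Second---and this is where the argument actually breaks---the associated variety $\AV(V')$ (the support of $\gr V'$) is not $G$-invariant, so $\AV(V')\supset \mu(T^*(G/P))$ fails: already for $V' = \univ{g}\otimes_{\univ{p}}\CC_\lambda$ one has $\AV(V') = (\lie{g}/\lie{p})^*$, a single fibre of $\mu$, not its image. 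What your computation genuinely yields is $\AV(V')\supset (\lie{g}/\lie{p})^*$; the desired $O\supset\mu(T^*(G/P))$ then follows because $O$ (or $\Variety(\gr\Ann_{\univ{g}}V')$) is $G$-invariant and closed. With this repair the argument goes through, though the detour via a cyclic $V'$ buys nothing---the same reasoning applied directly to $I$ is exactly the paper's two-line computation.
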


\begin{proof}
	We shall show that $\Variety(\gr I)$ contains $\mu(T^*(G/P))$.
	Since $\univ{g}$ is a free right $\univ{p}$-module, we have $I \subset \univ{g}\Ann_{\univ{p}}(F)$.
	Hence we have
	\begin{align*}
		\gr I \subset \bigcap_{g \in G} \Ad(g) (S(\lie{g})\gr \Ann_{\univ{p}}(F))
		\subset \bigcap_{g \in G} \Ad(g) (S(\lie{g})\lie{p}).
	\end{align*}
	This shows that $\Variety(\gr I)$ contains $\mu(T^*(G/P)) = \Ad^*(G)(\lie{g}/\lie{p})^*$.
	The first assertion follows from this, Theorem \ref{thm:AlmostIrr} and Lemma \ref{lem:NilpotentToFlag}.
	The second assertion follows from the first and the well-known fact $\Variety(\gr I) = \mu(T^*(G/P))$ if $\dim(F) < \infty$.
\end{proof}

The following results are analogues of Lemma \ref{lem:GeneralForm} and Theorem \ref{thm:AqLambda}.
The proofs are essentially the same as the previous ones, so we omit the proofs.
Note that we have shown the lower semicontinuity of $\dim (\univ{g}/\Ann_{\univ{g}}(L(\lambda)))^{G'}$ on $\lie{t}^*$ in Proposition \ref{prop:ContinuationFiniteDim}.

\begin{lemma}\label{lem:GeneralFormAlmostIrr}
	Let $S\subset \Prim(\univ{g})$.
	For each $I \in S$, take a representative $\lambda_I \in \lie{t}^*$ of the infinitesimal character of $I$.
	Suppose that $\overline{\set{\lambda_I: I \in S}}$ contains $\lie{a}_L^* + \mu_0$ for some $\mu_0 \in \lie{t}^*$.
	If $\sup_{I \in S} \dim (\univ{g}/I)^{G'} < \infty$, then $G'$ acts on $G/P$ transitively.
\end{lemma}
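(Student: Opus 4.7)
The plan is to run the argument of Lemma \ref{lem:GeneralForm} verbatim, making the substitutions $\PIdeg(\cdot)\leadsto \dim(\cdot)$, ``$G'$-spherical $\leadsto$ $G'$-transitive'', Lemma \ref{lem:BoundedSphericalAlgebraic} $\leadsto$ Corollary \ref{cor:FiniteLenTransitive}, and Lemma \ref{lem:SphericalAndLevi} $\leadsto$ Corollary \ref{cor:TransitiveAndLevi}. The role previously played by the lower semicontinuity of $\PIdeg$ (Theorem \ref{thm:LowerSemicontinuityPIdeg}) will now be played by the lower semicontinuity of the map $\lambda\mapsto \dim((\univ{g}/\Ann_{\univ{g}}(L(\lambda)))^{G'})$ on $\lie{t}^*$. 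The latter follows from Proposition \ref{prop:ContinuationFiniteDim} applied to the subspace $W=\univ{g}^{G'}\subset \univ{g}$: since $G'$ is reductive, the $G'$-action on $\univ{g}$ is completely reducible, so $(\univ{g}/\Ann_{\univ{g}}(L(\lambda)))^{G'}$ is canonically identified with $\tau_{\lambda}(\univ{g}^{G'})$, and Proposition \ref{prop:ContinuationFiniteDim} requires no finite-dimensionality hypothesis on $W$.

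Concretely, assuming $\sup_{I\in S}\dim((\univ{g}/I)^{G'})<\infty$, I would first invoke Duflo's Fact \ref{fact:Duflo} to write each $I\in S$ as $\Ann_{\univ{g}}(L(\lambda'_I))$ with $\lambda'_I=w_I(\lambda_I)-\rho$ for some $w_I\in W_G$. Since $W_G$ is finite, after replacing $S$ by a suitable subset I may assume $w_I=w$ is constant and that
\begin{equation*}
  \overline{\set{w(\lambda_I)-\rho:I\in S}}=w(\lie{a}_L^*+\mu_0)-\rho.
\end{equation*}
Combining Proposition \ref{prop:Continuity} with the lower semicontinuity above yields the uniform bound $\dim((\univ{g}/\Ann_{\univ{g}}(L(\lambda)))^{G'})<\infty$ for every $\lambda\in w(\lie{a}_L^*+\mu_0)-\rho$.

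Next, I set $L_0\coloneq Z_G(w(\lie{a}_L))$ and $P_0\coloneq L_0 U$, and choose $\lambda$ in the family $w(\lie{a}_L^*+\mu_0)-\rho$ for which $L(\lambda)$ is realized as an irreducible generalized Verma module $\univ{g}\otimes_{\univ{q_\mathrm{0}}}(F\otimes \CC_{\mu})$ (possible at generic points, exactly as in the proof of Lemma \ref{lem:GeneralForm}). Corollary \ref{cor:FiniteLenTransitive} then forces $G'$ to act transitively on $G/P_0$, and Corollary \ref{cor:TransitiveAndLevi} transports this to transitivity on $G/P$, since $L_0$ is conjugate to $L$ by an inner automorphism of $G$.

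The only nontrivial point is the lower semicontinuity of $\lambda\mapsto\dim((\univ{g}/\Ann_{\univ{g}}(L(\lambda)))^{G'})$, and this is pre-flagged in the remark preceding the lemma: it is already contained in Proposition \ref{prop:ContinuationFiniteDim}. Once this is granted, the remainder of the argument is a mechanical port of the PIdeg case, so no new ideas or estimates are needed.
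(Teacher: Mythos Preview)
Your proposal is correct and matches the paper's intended argument exactly: the paper omits the proof, saying it is ``essentially the same'' as that of Lemma \ref{lem:GeneralForm}, and explicitly flags Proposition \ref{prop:ContinuationFiniteDim} (applied with $W=\univ{g}^{G'}$) as the replacement for the lower semicontinuity input. One cosmetic remark: since Proposition \ref{prop:ContinuationFiniteDim} already gives lower semicontinuity of $\lambda\mapsto\dim((\univ{g}/\Ann_{\univ{g}}(L(\lambda)))^{G'})$ directly on $\lie{t}^*$, the appeal to Proposition \ref{prop:Continuity} is superfluous here (it was needed in Lemma \ref{lem:GeneralForm} only because Theorem \ref{thm:LowerSemicontinuityPIdeg} lives on $\Prim(\univ{g})$).
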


\begin{theorem}\label{thm:AqLambdaAlmostIrr}
	Let $Z$ be an $(\lie{l}, K_L)$-module of finite length, and $\set{\lambda_i}_{i \in I}$ a family of characters of $(\lie{l}, K_L)$.
	Fix $j \in \NN$.
	Suppose that $\mathcal{R}^j(Z\otimes \CC_{\lambda_i})$ is non-zero for any $i \in I$ and $\set{\lambda_i: i \in I}$ is Zariski dense in $\lie{a}^*_L$.
	If there exists a constant $C > 0$ such that, for any $i \in I$, the length of $\mathcal{R}^j(Z\otimes \CC_{\lambda_i})|_{\lie{g'}, K'}$ is bounded by $C$ from above, then $G'$ acts on $G/P$ transitively.
\end{theorem}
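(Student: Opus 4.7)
The plan is to deduce the theorem from Lemma \ref{lem:GeneralFormAlmostIrr} by repeating verbatim the argument that derives Theorem \ref{thm:AqLambda} from Lemma \ref{lem:GeneralForm}, with Theorem \ref{thm:UniformlyBoundedPIdegInequality} replaced by Lemma \ref{lem:BoundLenDim}. Whereas the former converts a uniform multiplicity bound into a uniform $\PIdeg$ bound, Lemma \ref{lem:BoundLenDim} converts a uniform restricted-length bound into a uniform dimension bound for $(\univ{g}/I)^{G'}$; everything else in the derivation is module-independent and purely formal.

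First I would reduce to the case where $Z$ has a single generalized infinitesimal character $[\lambda_Z]$. Since $Z$ has finite length, the primary decomposition $Z = \bigoplus_\chi Z_\chi$ with respect to $\univcent{l}$ is finite, and $\mathcal{R}^j(Z \otimes \CC_{\lambda_i}) = \bigoplus_\chi \mathcal{R}^j(Z_\chi \otimes \CC_{\lambda_i})$. Setting $I_\chi \coloneq \set{i \in I : \mathcal{R}^j(Z_\chi \otimes \CC_{\lambda_i}) \neq 0}$, the union $\bigcup_\chi \set{\lambda_i}_{i \in I_\chi}$ is Zariski dense in the irreducible variety $\lie{a}_L^*$, so at least one of the finitely many summands is itself Zariski dense, and I replace $Z, I$ by $Z_\chi, I_\chi$ for that $\chi$.

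Next I would form the set of primitive ideals. For each $i \in I$, let $W^{(i,1)}, \ldots, W^{(i, n_i)}$ be the composition factors of $\mathcal{R}^j(Z \otimes \CC_{\lambda_i})$ as a $(\lie{g}, K)$-module. Each $W^{(i,k)}$ is irreducible, and the restricted length $\Len(W^{(i,k)}|_{\lie{g'}, K'})$ is bounded by $\Len(\mathcal{R}^j(Z \otimes \CC_{\lambda_i})|_{\lie{g'}, K'}) \leq C$. Under the standing connectedness assumption on $G'_\RR$ (so that $K'$ is connected), $\lie{g'}$-submodules of a $(\lie{g'}, K')$-module are automatically $K'$-stable, and hence this length coincides with $\Len(W^{(i,k)}|_{\lie{g'}})$. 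Lemma \ref{lem:BoundLenDim} applied to the irreducible $(\lie{g}, K)$-module $W^{(i,k)}$ then yields $\dim((\univ{g}/\Ann_{\univ{g}}(W^{(i,k)}))^{G'}) \leq C^2$, uniformly in $i$ and $k$. Put $S \coloneq \set{\Ann_{\univ{g}}(W^{(i,k)})}_{i,k} \subset \Prim(\univ{g})$.

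Finally I would verify the density hypothesis of Lemma \ref{lem:GeneralFormAlmostIrr}. By the formula for the infinitesimal character of $\mathcal{R}^j$ recalled in the excerpt, each $W^{(i,k)}$ has infinitesimal character $[\lambda_Z + \lambda_i - \rho_{\lie{u}_P}] \in \lie{t}^*/W_G$; choosing this weight as the representative in $\lie{t}^*$, the Zariski closure of $\set{\lambda_I : I \in S}$ contains the Zariski closure of $\set{\lambda_Z + \lambda_i - \rho_{\lie{u}_P} : i \in I}$, which is the affine subspace $\lie{a}_L^* + (\lambda_Z - \rho_{\lie{u}_P})$ by the density of $\set{\lambda_i}$ in $\lie{a}_L^*$. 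Lemma \ref{lem:GeneralFormAlmostIrr} then produces the desired transitivity of the $G'$-action on $G/P$. The one non-automatic step is the primary-decomposition reduction, but this is immediate from the irreducibility of the affine space $\lie{a}_L^*$; all Weyl-group bookkeeping is absorbed inside the proof of Lemma \ref{lem:GeneralFormAlmostIrr}, so the argument is entirely parallel to that of Theorem \ref{thm:AqLambda}.
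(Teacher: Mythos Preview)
Your proposal is correct and follows exactly the approach the paper intends: the paper omits the proof, stating only that it is ``essentially the same'' as that of Theorem~\ref{thm:AqLambda}, which in turn ``follows from Lemma~\ref{lem:GeneralForm} immediately.'' You have spelled out the reduction to Lemma~\ref{lem:GeneralFormAlmostIrr}, with Lemma~\ref{lem:BoundLenDim} playing the role that Theorem~\ref{thm:UniformlyBoundedPIdegInequality} plays in the bounded-multiplicity case; the primary-decomposition reduction and the passage to composition factors are the natural details the paper leaves implicit.
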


\begin{remark}
	As we have seen in Theorem \ref{thm:AlmostIrr}, if $\mathcal{R}^j(Z\otimes \CC_{\lambda_i})$ is unitarizable, we can replace $\mathcal{R}^j(Z\otimes \CC_{\lambda_i})$ with its Hilbert completion.
\end{remark}

\begin{remark}
	The converse of Theorem \ref{thm:AlmostIrr} has been proved in \cite[Theorem 3.5]{Ko11}.
	See also Theorem \ref{thm:AlmostIrr} and Corollary \ref{cor:FiniteLenTransitive}.
\end{remark}

\bibliographystyle{abbrv}


\begin{thebibliography}{10}

\bibitem{AiGo21}
A.~Aizenbud and D.~Gourevitch.
\newblock Finite multiplicities beyond spherical pairs.
\newblock arXiv:2109.00204.

\bibitem{AiGoMi16}
A.~Aizenbud, D.~Gourevitch, and A.~Minchenko.
\newblock Holonomicity of relative characters and applications to multiplicity bounds for spherical pairs.
\newblock {\em Selecta Math. (N.S.)}, 22(4):2325--2345, 2016.

\bibitem{Am56}
A.~S. Amitsur.
\newblock Algebras over infinite fields.
\newblock {\em Proc. Amer. Math. Soc.}, 7:35--48, 1956.

\bibitem{AvPe14}
R.~S. Avdeev and A.~V. Petukhov.
\newblock Spherical actions on flag varieties.
\newblock {\em Mat. Sb.}, 205(9):3--48, 2014.

\bibitem{CoMc93}
D.~H. {Collingwood} and W.~M. {McGovern}.
\newblock {\em {Nilpotent orbits in semisimple Lie algebras}}.
\newblock New York, NY: Van Nostrand Reinhold Company, 1993.

\bibitem{Du77_primitive_ideal}
M.~Duflo.
\newblock Sur la classification des id\'eaux primitifs dans l'alg\`ebre enveloppante d'une alg\`ebre de {L}ie semi-simple.
\newblock {\em Ann. of Math. (2)}, 105(1):107--120, 1977.

\bibitem{Ja83}
J.~C. Jantzen.
\newblock {\em Einh\"{u}llende {A}lgebren halbeinfacher {L}ie-{A}lgebren}, volume~3 of {\em Ergeb. Math. Grenzgeb. (3)}.
\newblock Springer-Verlag, Berlin, 1983.

\bibitem{Ki20}
M.~Kitagawa.
\newblock Uniformly bounded multiplicities, polynomial identities and coisotropic actions.
\newblock arXiv:2109.05555.

\bibitem{Ki23}
M.~Kitagawa.
\newblock Family of {$\mathscr{D}$}-modules and representations with a boundedness property.
\newblock {\em Represent. Theory}, 27:292--355, 2023.

\bibitem{KnVo95_cohomological_induction}
A.~W. Knapp and D.~A. Vogan, Jr.
\newblock {\em Cohomological induction and unitary representations}, volume~45 of {\em Princeton Mathematical Series}.
\newblock Princeton University Press, Princeton, NJ, 1995.

\bibitem{Ko97_multiplicity_free}
T.~Kobayashi.
\newblock Multiplicity free theorem in branching problems of unitary highest weight modules.
\newblock {\em Proceedings of Representation Theory held at Saga, Kyushu, 1997 (K. Mimachi, ed.)}, pages 9--17, 1997.

\bibitem{Ko05}
T.~Kobayashi.
\newblock Multiplicity-free representations and visible actions on complex manifolds.
\newblock {\em Publ. Res. Inst. Math. Sci.}, 41(3):497--549, 2005.

\bibitem{Ko11}
T.~Kobayashi.
\newblock Branching problems of {Z}uckerman derived functor modules.
\newblock In {\em Representation theory and mathematical physics}, volume 557 of {\em Contemp. Math.}, pages 23--40. Amer. Math. Soc., Providence, RI, 2011.

\bibitem{Ko12_generalized_Verma}
T.~Kobayashi.
\newblock Restrictions of generalized {V}erma modules to symmetric pairs.
\newblock {\em Transform. Groups}, 17(2):523--546, 2012.

\bibitem{Ko13}
T.~Kobayashi.
\newblock Propagation of the multiplicity-freeness property for holomorphic vector bundles.
\newblock In {\em Lie Groups: Structure, Actions, and Representations}, volume 306 of {\em Progr. Math.} Birkh\"auser Basel, 2013.

\bibitem{Ko22}
T.~Kobayashi.
\newblock Bounded multiplicity theorems for induction and restriction.
\newblock {\em J. Lie Theory}, 32(1):197--238, 2022.

\bibitem{KoOs13}
T.~Kobayashi and T.~Oshima.
\newblock Finite multiplicity theorems for induction and restriction.
\newblock {\em Adv. Math.}, 248:921--944, 2013.

\bibitem{Le50}
J.~Levitzki.
\newblock On multiplicative systems.
\newblock {\em Compositio Math.}, 8:76--80, 1950.

\bibitem{Lo09}
I.~V. Losev.
\newblock Algebraic {H}amiltonian actions.
\newblock {\em Math. Z.}, 263(3):685--723, 2009.

\bibitem{McRo01_noncommutative}
J.~C. McConnell and J.~C. Robson.
\newblock {\em Noncommutative {N}oetherian rings}, volume~30 of {\em Graduate Studies in Mathematics}.
\newblock American Mathematical Society, Providence, RI, revised edition, 2001.
\newblock With the cooperation of L. W. Small.

\bibitem{So90primespectrum}
W.~Soergel.
\newblock The prime spectrum of the enveloping algebra of a reductive {L}ie algebra.
\newblock {\em Math. Z.}, 204(4):559--581, 1990.

\bibitem{Ta22}
Y.~Tanaka.
\newblock Visible actions of compact {L}ie groups on complex spherical varieties.
\newblock {\em J. Differential Geom.}, 120(2):375--388, 2022.

\bibitem{Ta25}
Y.~Tanaka.
\newblock A {N}ote on {M}ultiplicity-{F}reeness {P}roperty of {C}ohomology {S}paces.
\newblock In {\em Symmetry in {G}eometry and {A}nalysis, {V}olume 1}, pages 533--564. Birkh\"auser/Springer, Singapore, 2025.

\bibitem{Ta21}
T.~Tauchi.
\newblock A generalization of the {K}obayashi-{O}shima uniformly bounded multiplicity theorem.
\newblock {\em Internat. J. Math.}, 32(14):Paper No. 2150107, 25, 2021.

\bibitem{Ti11}
D.~A. Timashev.
\newblock {\em Homogeneous spaces and equivariant embeddings}, volume 138 of {\em Encyclopaedia of Mathematical Sciences}.
\newblock Springer, Heidelberg, 2011.
\newblock Invariant Theory and Algebraic Transformation Groups, 8.

\bibitem{ViKi78_spherical}
{\'E}.~B. Vinberg and B.~N. Kimel'fel'd.
\newblock Homogeneous domains on flag manifolds and spherical subgroups of semisimple lie groups.
\newblock {\em Funct. Anal. Appl.}, 12(3):168--174, 1978.

\bibitem{ViPo94}
E.~B. Vinberg and V.~L. Popov.
\newblock Invariant theory.
\newblock In A.~N. Parshin and I.~R. Shafarevich, editors, {\em Algebraic geometry. {IV}}, volume~55 of {\em Encyclopaedia of Mathematical Sciences}, pages 123--284. Springer-Verlag, Berlin, 1994.

\bibitem{Wa88_real_reductive_I}
N.~R. Wallach.
\newblock {\em Real reductive groups. {I}}, volume 132 of {\em Pure and Applied Mathematics}.
\newblock Academic Press, Inc., Boston, MA, 1988.

\end{thebibliography}

\def\cprime{$'$} \def\cprime{$'$}

\end{document}